\documentclass[11pt]{article}
\usepackage{amssymb,amsthm,amsfonts,hhline,color}

\bibliographystyle{amsalpha}

\def\C{{\bf C}}

\def\F{{\bf F}}
\def\Z{{\bf Z}}

\def\x{{\bf x}}
\def\y{{\bf y}}
\def\o{{\omega}}
\def\bo{{\bar\omega}}

\sloppy

\def\halb{\hbox{$\frac{1}{2}$}}

\parskip1.2ex 

\topmargin-1.5cm
\textheight22.22cm
\textwidth14cm
\oddsidemargin0mm

\title{A new class of codes over $\Z_2\times \Z_2$}

\author{Julia Galstad and Gerald H\"ohn}

\date{August 2010}

\begin{document}

\bibliographystyle{amsalpha}

\theoremstyle{plain}
\newtheorem{thm}{Theorem}[section]
\newtheorem{prop}[thm]{Proposition}
\newtheorem{lem}[thm]{Lemma}
\newtheorem{cor}[thm]{Corollary}
\newtheorem{rem}[thm]{Remark}
\newtheorem{conj}[thm]{Conjecture}

\theoremstyle{definition}
\newtheorem{defi}[thm]{Definition}

\renewcommand{\baselinestretch}{1.2}

\maketitle

\section{Introduction}\label{intro}

In this paper, we discuss a class of codes which we call $L$-codes. They arise
naturally as a fifth step in a series of analogies between Kleinian codes,
binary codes, lattices and vertex operator algebras (see~\cite{Ho-kleinian,CoSl,Ho-dr} for
a discussion of these analogies).
In many aspects, $L$-codes are similar to Kleinian codes \cite{CRSS-quant,Ho-kleinian}.
In particular, both are additive codes over the Kleinian four group. However, it is more natural
to see them as part of the above sequence. Then results known for the other
four cases, in particular for self-dual objects, generalize naturally to $L$-codes.

\smallskip

An important part of our viewpoint of $L$-codes is the relation between $L$-codes, Kleinian and
binary codes of various lengths. We discuss several maps in detail.
The map $\psi$, which is conceptually the most important one, identifies
$L$-codes of length~$n$ with subsets of $(\delta_2^{\bot}/\delta_2)^n$, where
$\delta_2$ is the Kleinian code $\{(0,\,0),\,(a,\,a)\}$ (see Section 3) and is motivated by
the analogy mentioned at the beginning. The map $\phi$ essentially considers an $L$-codes as a Kleinian code by forgetting some structure.
Although not very natural, this provides the most efficient computational
approach to classification of $L$-codes, in particular self-dual ones.

Many proofs are easy and standard in coding theory and so we often skip details.
Sometimes we provide complete proofs since we like to emphasize the language of $L$-codes,
although the results would directly follow from corresponding results for Kleinian
or binary codes. However, if it is more convenient to deduce results from
Kleinian or binary codes, we do so; an example is the mass formula for self-dual codes.

\medskip

The paper is organized as follows. In the rest of the introduction our motivation
for introducing $L$-codes is explained. These codes arise quite naturally from various
viewpoints if one studies vertex operator algebras.
In Section~\ref{basic}, we introduce the notion of
$L$-codes and give related definitions, like scalar products and weight enumerators.
We also prove some basic theorems. In Section~\ref{relations}, we discuss the above mentioned
relation of $L$-codes to other type of codes. In the final section, we study the classification
of self-dual codes. All self-dual codes up to length $10$ will be classified. We provide
explicit tables up to length~$4$. We also discuss some results about extremal codes
for which details will be provided in a further paper.  Most results of this paper are from
the second author's master's thesis~\cite{Ga-master}.

\bigskip

\paragraph{Motivations coming from vertex operator algebra theory.}
We describe three different motivations to introduce the notion of $L$-codes arising from
the viewpoint of vertex operator algebras.

\medskip

In general, it is a difficult problem to describe the possible extensions of a rational
vertex operator algebra. The isomorphism classes of extensions are described completely
in terms of the associated {\it modular tensor category\/} (essentially the same as a three-dimensional
quantum field theory); cf.~\cite{Ho-genus}. One idea to construct new vertex operator algebras
(or to give different description of known ones) is to take the tensor product of one
fixed vertex operator algebra and to study the extension of it. Then one has to develop
a ``coding theory'' for the modular tensor category of the vertex operator algebra one starts with.
For example, extensions of tensor products of the lattice vertex operator algebra for the root lattice $A_1$
are described by the well-studied doubly-even self-orthogonal codes. For the Virasoro
vertex operator algebra of central charge~$1/2$, this leads to the theory of framed vertex operator
algebras~\cite{DGH-virs}.
For the lattice vertex operator algebra for the root lattice $D_4$, one obtains what was
called Kleinian codes in~\cite{Ho-kleinian}. The corresponding modular tensor categories have in
these three cases not more than four simple objects. Such tensor categories can be classified,
cf.~\cite{Eh-dr,RSW-modular}. Inspecting the list of these tensor categories (cf.~the list in the
database~\cite{HGY-database}) one sees that there are essentially five cases for which
the tensor category is described by a quadratic form on an abelian group. In these
cases one ends up with a ``classical coding'' theory as studied in quite generality ~\cite{NSC-book}.
They are binary codes, ternary codes, $\Z_2\times \Z_2$-codes in the form of Kleinian codes, two related cases
of codes over $\Z_4$ and one other type of codes over $\Z_2\times \Z_2$ which we will call $L$-codes in
this paper. Kleinian and $L$-codes and codes share the same scalar product which make them quite similar, but the
corresponding quadratic forms are different and thus the notion of {\it even codes\/} will be different.

In addition, the concept of {\it Euclidean weight\/} is an additional structure not coming alone from the
modular tensor category, but naturally described in terms of a vertex operator algebra
realizing the modular tensor category in question. In the case of $L$-codes, such a
vertex operator algebra must have central charge divisible by $8$. For the smallest
possible case of central charge there is exactly one such vertex operator algebra, namely
the vertex operator algebra associated to the root lattice $D_8$. (This can for example be
seen in the following way: The vertex operator algebra corresponding to the even self-dual $L$-code $\Xi_1$
is self-dual and it is known that the unique self-dual vertex operator algebra of central
charge~$8$ is the one associated to the $E_8$-lattice. A vertex operator subalgebra corresponding
to the $L$-code $\{0\}$ corresponds to an involution in the exceptional Lie group $E_8(\C)$
which is the automorphism group of the $E_8$-lattice vertex operator algebra. There are only
two conjugacy classes of involutions in $E_8(\C)$ and the one corresponding to the code $\{0\}$
must be the one of type $2B$.) The four isomorphism classes of irreducible modules of the $D_8$ lattice vertex operator
algebra have the conformal weights~$0$, $1/2$, $2$ and $2$. This explains our choice of the
Euclidean weight of $L$-codes. In summary, even self-orthogonal $L$-codes describe extensions of
tensor products of the $D_8$ lattice vertex operator algebra.

\smallskip

A further reason to consider $L$-codes is that they arise naturally as another step in
the analogy between Kleinian codes, binary codes, lattices and vertex operator algebras
as discussed in~\cite{Ho-kleinian}, Section~7. As mentioned loc.~cit.~there is one more
such step, and these are $L$-codes. We will describe two kind of questions for which
this viewpoint may be helpful.

\smallskip

One application are the study of {\it automorphism groups\/} of vertex operator algebras. The automorphism group
of the fixed point lattice vertex operator algebra $V_L^+$ for an lift of the $(-1)$-isometry
of a lattice $L$ was described in general by Shimakura~\cite{Shimakura1, Shimakura2}. In particular, the description
allows to compute its order (cf.~the corresponding entries in the VOA database~\cite{HGY-database}).
However, it is difficult to determine the exact shape of ${\rm Aut}(V_L^+)$ in general. Complications
arise if the lattice $L$ arises from a binary code~$C$ and then further if $C$ arises from
a Kleinian code $D$. This continues if the code $D$ comes from an $L$-code. Thus the automorphism groups
of $L$-codes play a role in the description of the automorphism group of certain naturally arising
vertex operator algebras.

An important example of a vertex operator algebra is the Moonshine module~$V^\natural$.
As explained in~\cite{DGH-virs}, the Moonshine module can be constructed naturally by twisted constructions
from what in the notation of the present paper is the $L$-code~$\Upsilon_3$. However,
there is currently no purely vertex operator algebraic description of the automorphism group
of $V^\natural$, the sporadic simple group known as the Monster. One hope is that a
similar analysis for the twisted construction as was done for the $+$-construction by~\cite{Shimakura1, Shimakura2}
may finally provide a vertex operator algebraic understanding of the Monster by starting from $\Upsilon_3$.

\smallskip

The second application we have in mind is a better understanding of {\it extremal vertex operator algebras.\/}
Extremal vertex operator algebras have been introduced by the second author in~\cite{Ho-dr} as an analogue of extremal
binary codes and extremal lattices. Codes and lattices are used in information science to transmit
information error-free over noisy channels, cf.~\cite{Shannon}. With the theoretical development
of quantum computing and quantum information theory, cf.~\cite{Shor-capacity}, Kleinian codes have been used as
quantum codes~\cite{CRSS-quant}. More realistic models of computing and information must be based on quantum field theory.
The well-known difficulties in understanding realistic nonperturbative models of quantum field theory in four dimensions
motivated the investigation of simpler three-dimensional models. In particular, it was shown that
a three-dimensional quantum field theory computer may used for $\#P$-complete problems~\cite{Freedman-computer}.
The ultimate limit for the information capacity is given by the semi-classical Bekenstein-Hawking formula
of the entropy of a black hole in terms of the area of its event horizon~\cite{Hawking-rumms}. For the
three-dimensional case it was suggested more recently by Witten~\cite{witten-3dgravity}, that extremal vertex operator algebras of
central charge a multiple of~$24$ may provide a precise quantum field theoretical description of three-dimensional black holes via
the AdS/CFT correspondence. Unfortunately, there is no such vertex operator algebra known besides the
Moonshine module, but there is also no known limit on the size central charge as is the case
for lattices, binary codes and Kleinian codes. However, even in the case of lattices, binary codes and Kleinian codes,
the known bounds are much larger than the largest constructed examples.
The situation for $L$-codes is quite different: restricting to the cases where the length is a multiple of $3$
(which corresponds to vertex operator algebras of central charge a multiple of~$24$) we find that
even extremal $L$-codes exist only for the length $3$ (the code $\Upsilon_3$) and length~$6$ (one code derived
from the Hexacode).  This gives some numerical evidence that there should exist extremal vertex operator
algebras at least for the central charge~$48$ if not for larger central charge. Note that there is also a unique even extremal Kleinian code of length~$12$~\cite{CRSS-quant}, a unique even extremal code of length~$48$~\cite{self-dual-48},
and at least three even extremal lattices of rank~$48$~\cite{CoSl}.


\section{Definitions and examples}\label{basic}

\subsection{$L$-codes}
We let $L$ be the Kleinian four group $\Z_2^2$ and denote its four elements by $0$, $1$,
$\omega$ and $\bar\omega$. We also consider the map
$|\,.\,|^2: L\longrightarrow \Z$ defined by $|0|^2=0$, $|1|^2=1$ and $|\o|^2=|\bo|^2=2$
and the map $q:L\longrightarrow \F_2$, $q(x)=|x|^2\bmod{2}$, i.e.~$q(0)=q(\o)=q(\bo)=0$ and $q(1)=1$.
Then $q$ is a finite quadratic form and $(L,q)$ becomes a finite quadratic space.
Adding this structure to the Kleinian four group makes $1$ the
distinguished non-zero element of $L$ with the only non-zero value of $q$.
From the quadratic form $q$ we derive a
symmetric, biadditive dot product $\cdot:L\times L\longrightarrow \F_2$,
$x \cdot y = q(x+y)-q(x)-q(y)$.
Explicitly one has
$$\begin{array}{c|cccc}
\cdot & 0 & 1 & \omega & \bar{\omega} \\
\hline
0 & 0 & 0 & 0 & 0 \\
1 & 0 & 0 & 1 & 1 \\
\omega & 0 & 1 & 0 & 1 \\
\bar{\omega} & 0 & 1 & 1 & 0 \\
\end{array}_. $$

We can naturally extend these structures to the $n$-fold direct sum $L^n$ of $L$.
More explicitly, the {\it (Euclidean) weight\/} is the norm extended
to~$L^n$:
$$\mbox{ewt}:L^n \to {\bf Z}, \qquad {\bf x}\mapsto \sum_{i=1}^n {|\,{x_i}|^2},$$
where we write $\x \in L^n$ as a vector ${\bf x} = (x_1,\dots,x_n)$, $x_i \in L$.
The quadratic form extended to $L^n$ is:
$$q:L^n \to {\bf F}_2, \qquad {\bf x} \mapsto \sum_{i=1}^n q(x_i).$$
Clearly, $ q({\bf x})=\mbox{ewt}({\bf x}) \bmod{2}$ for all ${\bf x}\in L^n$.
The associated bilinear form of the quadratic form $q$ on $L^n$ we call
the {\it scalar product\/} and it
is given by
$$(\,.\,,\,.\,):L^n \times L^n \to {\bf F}_2, \qquad ({\bf x},{\bf y})=\sum_{i=1}^n x_i \cdot y_i,$$
where $\cdot$ is the dot product on $L$ as above.

The semidirect product $G=S_2^n{:}S_n$, the wreath product of
$S_2$ by $S_n$, acts on~$L^n$: An element of $G$ consists of a
permutation of the $n$ coordinates together with a permutation of
$\omega$ and $\bar{\omega}$ at each position. This action preserves
all the additional structures of Hamming weight (defined below), Euclidean weight,
quadratic form and scalar product.

\medskip

A {\it code\/} over $L$ of length $n$, or {\it $L$-code\/} for short, is any subset
$C\subset L^n$. Its elements are called {\it codewords.\/} The code $C$ is
called {\it linear\/} if it is a subgroup of $L^n \cong {\bf Z}_2^{2n}$. A
linear code has $4^k$ elements, with $k\in \frac{1}{2}{\bf Z}$. We
call $k$ the {\it dimension\/} of the code.  All the codes we are
considering in this paper are assumed to be linear.
A $subcode$ of {\it C\/} is a subgroup $D$ of $C$.

As usual in coding theory, we let the {\it Hamming weight\/} $\mbox{wt}(\bf{x})$ of a
vector ${\bf x}\in L^n$ be the number of nonzero components~$x_i$.
We will use the concepts of both {\it minimal Hamming weight\/} and
{\it minimal Euclidean weight\/}. The former is defined by
$h(C)=\min\{\mbox{wt}({\bf x}) \mid {\bf x}\in C \backslash \{0\}\}.$
Similarly, the later is
$d(C)=\min\{\mbox{ewt}({\bf x}) \mid {\bf x}\in C \backslash \{0\}\}$.
An $[n,k]$-code is a code of length $n$ and dimension~$k$.
An $[n,k,d]$-code is an $[n,k]$-code of minimal Euclidean weight $d$.

The {\it automorphism group\/} of a code $C$ is the subgroup of $G$ sending $C$ to itself:
$$\mbox{Aut}(C)=\{g \in S_2^n{:}S_n \mid g  C = C\}$$Two
codes $C$ and $D$ are {\it equivalent\/} if there is a $g\in G$ with $g C = D$.
%
It is clear that the number of codes equivalent to a code $C$ is
$\textstyle\frac{2^n \cdot n!}{|{\rm Aut}(C)|}$.

\smallskip

Given an $[n,k]$-code $C$ and an $[m,l]$-code $D$, we can define the
direct sum $C \oplus D$ as the direct product subgroup of $L^n
\oplus L^m$. The direct sum of $C$ and $D$ is an $[n+m,k+l]$ code.
For short, we write $C^l:=\bigoplus_{i=1}^l C$.
If $C = C_1 \oplus
C_2$, for nontrivial codes $C_1$ and $C_2$, then $C$ is called
{\it decomposable,\/} otherwise $C$ is {\it indecomposable.\/} Every code $C$
can be expressed uniquely as a direct sum of indecomposable codes,
up to reordering of the components and equivalence of codes.


\subsection{Weight enumerators}

We define several weight enumerator polynomials for an $L$-code $C\subset L^n$.

The {\it complete weight enumerator\/} is the polynomial
$$\mbox{cwe}_C(p,q,r,s)=\sum_{i,j,k,l}A_{i,j,k,l}\,p^i q^j r^k s^l,$$
where $A_{i,j,k,l}$ is the number of code words in $C$
containing at $i,$ $j,$ $k,$ resp.~$l$ of the $n$ coordinates the
element $0$, $1$, $\omega$, resp. $\bar{\omega}$.
The {\it symmetric weight enumerator\/} does not distinguish
between $\omega$ and $\bar{\omega}$. It is defined to be the degree
$n$ polynomial
$$\mbox{swe}_C(x,y,z)=\sum_{i,j} S_{i,j}\, x^{n-i-j} y^i z^j,$$
where $S_{i,\, j}$ is the number of code words in $C$
containing at~$i$ of the $n$ coordinates the element~$1$
and containing at $j$ of the $n$ coordinates the element
$\omega$ or $\bar{\omega}$.
The {\it (Hamming) weight enumerator \/} of $C$
is the degree $n$ polynomial
 $$\mbox{W}_C(u,v)=\sum_{i=0}^n H_i \,u^{n-i}v^i \hskip 5 mm
\mbox{ with }H_i:=\#\{ {\bf x} \in C \mid \hbox{wt}({\bf x})=i \}.$$
Finally, the {\it Euclidean weight enumerator\/} of
$C$ is the degree $2n$ polynomial
$$\mbox{EW}_C(a,b)=\sum_{i=0}^{2n} E_i \,a^{2n-i}b^i \hskip 5 mm
\mbox{ with }E_i:=\#\{ {\bf x} \in C \mid {\rm ewt}({\bf x})=i \}.
$$

\begin{rem} \label{enumRel} The following weight enumerator relationships hold:
$${\rm swe}_C(x,y,z) = {\rm cwe}_C(x,y,z,z),$$
$${\rm W}_C(u,v) = {\rm swe}_C(u,v,v) = {\rm cwe}_C(u,v,v,v),$$
$${\rm EW}_C(a,b)={\rm swe}_C(a^2,ab,b^2)={\rm cwe}_C(a^2,ab,b^2,b^2).$$
Furthermore, equivalent codes will have the same Hamming, symmetric
and Euclidean weight enumerators. \end{rem}


\subsection{Self-dual codes}

The codes we are primarily interested in are self-dual codes.
There are two concepts of even. A code $C$ is {\it Hamming even\/} if
for any $\x \in C$, $\mbox{wt}(\x)\in 2{\bf Z}$.  A code $C$ is
{\it (Euclidean) even\/} if $\mbox{ewt}(\x)\in 2{\bf Z}$ for all $\x \in C$.
Unless stated otherwise, even will mean Euclidean even. Clearly, a
code is even if and only if $q(\x)=0$ for all $\x \in C$.

The {\it dual code\/} of $C$ is defined as
$$C^{\bot}=\{\x\in L^n \mid (\x,\y)=0
\mbox{ for all } \y\in C \},$$ where $(\,.\,,\,.\,)$ is the scalar
product.
A code $C$ is called \emph{self-orthogonal} if $C\subset
C^{\bot}$, and \emph{self-dual} if $C=C^{\bot}$.

\begin{rem}\label{selfdual1} If $C$ is a code, then $C^{\bot}$ is a linear code. \end{rem}

\begin{lem}\label{dimensionSelfDual2} If $C$ is an $L$-code of type $[n,k]$, then $C^{\bot}$ is an $L$-code of type $[n,n-k]$.
In particular, the dimension of a self-dual code $C$ is
$\frac{n}{2}.$
\end{lem}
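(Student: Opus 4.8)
The plan is to regard $L^n$ as a $2n$-dimensional vector space over $\F_2$ equipped with the symmetric bilinear scalar product, and to read off the dimension count from the fact that this form is nondegenerate, via the rank--nullity theorem. The only genuine input is nondegeneracy; everything else is routine linear algebra over $\F_2$, with a little care needed to pass between the $\F_2$-dimension of a code and its $L$-dimension $k$, which differ by a factor of two since an $[n,k]$-code has $4^k=2^{2k}$ elements. First I would check nondegeneracy. On a single copy $L$, taking the $\F_2$-basis $\{1,\o\}$ (so that $1+\o=\bo$), the dot product table gives the Gram matrix $\left(\begin{array}{cc}0&1\\1&0\end{array}\right)$, whose determinant is nonzero in $\F_2$; hence the dot product on $L$ is nondegenerate. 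Since the scalar product on $L^n$ is the orthogonal direct sum of $n$ such copies, its Gram matrix is block diagonal with these blocks and is again invertible, so the scalar product on $L^n$ is nondegenerate.

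Next I would introduce the $\F_2$-linear map $\Phi\colon L^n \to \mbox{Hom}(C,\F_2)$ sending $\x$ to the functional $\y\mapsto(\x,\y)$ on $C$. By the definition of the dual code, $\ker\Phi = C^{\bot}$. Nondegeneracy means the map $L^n\to (L^n)^*$, $\x\mapsto(\x,\,.\,)$, is an isomorphism; composing with the restriction of functionals to $C$, which is surjective because $C$ is a subspace, shows that $\Phi$ is onto $\mbox{Hom}(C,\F_2)$. Rank--nullity then gives $\dim_{\F_2}C^{\bot}=2n-\dim_{\F_2}C$. Writing $\dim_{\F_2}C=2k$ for an $[n,k]$-code yields $\dim_{\F_2}C^{\bot}=2(n-k)$, so $C^{\bot}$ has $4^{n-k}$ elements and is of type $[n,n-k]$; it is linear by Remark~\ref{selfdual1}. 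Finally, if $C$ is self-dual then $C=C^{\bot}$ forces $k=n-k$, whence $k=\frac{n}{2}$.

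I do not expect a serious obstacle. The single fact that must be verified rather than assumed is nondegeneracy of the scalar product, since this is exactly what makes $\Phi$ surjective and thereby upgrades the trivial inequality $\dim_{\F_2}C^{\bot}\ge 2n-\dim_{\F_2}C$ to an equality; the only place to be careful is the bookkeeping between the $\F_2$-dimension and the $L$-dimension, keeping track of the factor of two throughout.
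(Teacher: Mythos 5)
Your proof is correct, but it takes a genuinely different route from the paper's. The paper proves this lemma in one line by transfer: it invokes Remark~\ref{basicphi} (the map $\phi$ is a bijective homomorphism preserving the scalar product), so that $L^n$ with its scalar product is identified with $K^n$ with its scalar product, and then quotes the dimension formula $\dim C^{\bot}=n-k$ already known for Kleinian codes from \cite{Ho-kleinian}. This is in line with the paper's stated policy of deducing results from Kleinian or binary codes whenever convenient. You instead give a self-contained linear-algebra argument: you verify nondegeneracy directly via the Gram matrix $\left(\begin{array}{cc}0&1\\1&0\end{array}\right)$ of the dot product on each copy of $L$ in the basis $\{1,\o\}$, take the block-diagonal sum over the $n$ coordinates, and then run rank--nullity on $\Phi\colon L^n\to\mathrm{Hom}(C,\F_2)$, whose kernel is $C^{\bot}$ and whose surjectivity follows from nondegeneracy plus the fact that functionals on a subspace extend; the factor-of-two bookkeeping between $\F_2$-dimension $2k$ and $L$-dimension $k$ is handled correctly, and the self-dual case $k=n-k$ follows. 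What your approach buys: it is independent of \cite{Ho-kleinian} and of the forward reference to Remark~\ref{basicphi}, and it isolates the one substantive input (nondegeneracy of the scalar product) that also underlies the Kleinian-code result. What the paper's approach buys: brevity, and a single source for this computation rather than reproving it in the $L$-code setting.
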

\begin{proof}
By Remark~\ref{basicphi} below, the scalar product for
Kleinian codes is the same as for $L$-codes. Hence, the dimension of
$C^{\bot}$ equals $n-k$ when the dimension of $C$ is~$k$, because
this result is true for Kleinian codes \cite{Ho-kleinian}.
\end{proof}
Together with Remark~\ref{selfdual1} this implies:
\begin{rem}\label{selfdual2} If $C$ is a linear code, then ${(C^{\bot})}^{\bot}=C$.
\end{rem}


\begin{lem}\label{selfdual3}
Let $S\subset L^n$ be a set of pairwise orthogonal vectors.
Then the code
$${\rm span}(S)=\left\{ \sum_{i=1}^m x_i \,\big|\, x_i \in S,\,m \in \bf{N} \right\}$$
spanned by $S$ is self-orthogonal.
If in addition the vectors in $S$ are even, then ${\rm span}(S)$ is even.
\end{lem}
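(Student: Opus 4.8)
The plan is to derive both assertions directly from two facts: the scalar product $(\,.\,,\,.\,)$ is biadditive, and it is the polar form of the quadratic form $q$, in the sense that $q(\x+\y)=q(\x)+q(\y)+(\x,\y)$ for all $\x,\y\in L^n$ (this is just the defining relation $x\cdot y=q(x+y)-q(x)-q(y)$ summed over the $n$ coordinates). A preliminary observation makes everything concrete: since $L^n\cong\Z_2^{2n}$ satisfies $2\x=0$, the span of $S$ is exactly the set of $\F_2$-linear combinations of elements of $S$, so every element of ${\rm span}(S)$ can be written as a finite sum $s_1+\cdots+s_m$ of (distinct) members of $S$.

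For self-orthogonality I would take two arbitrary elements $\x=\sum_{s\in S'} s$ and $\y=\sum_{t\in T'} t$ of ${\rm span}(S)$ and expand by biadditivity:
$$(\x,\y)=\sum_{s\in S'}\sum_{t\in T'}(s,t).$$
Each term with $s\neq t$ vanishes because the vectors of $S$ are pairwise orthogonal, and each diagonal term vanishes because $(s,s)=q(s+s)-2q(s)=q(0)=0$ in $\F_2$; that is, the polar form of a quadratic form is alternating in characteristic~$2$. Hence $(\x,\y)=0$ for all $\x,\y\in{\rm span}(S)$, which is precisely the assertion ${\rm span}(S)\subset{\rm span}(S)^\bot$.

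For the evenness claim I would iterate the polarization identity. Writing a typical codeword as $s_1+\cdots+s_m$ with $s_i\in S$, a short induction on $m$ yields the standard expansion
$$q(s_1+\cdots+s_m)=\sum_{i=1}^m q(s_i)+\sum_{1\le i<j\le m}(s_i,s_j).$$
If in addition every vector of $S$ is even, then each $q(s_i)=0$, while every cross term $(s_i,s_j)$ vanishes by pairwise orthogonality; therefore $q$ is identically zero on ${\rm span}(S)$, so ${\rm span}(S)$ is even.

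I do not expect a genuine obstacle here: once the reduction to $\F_2$-coefficients and the polar-form identity are in place, both parts are formal consequences of biadditivity. The only point demanding a little care is the bookkeeping in the quadratic expansion---checking that each unordered pair contributes its cross term exactly once---but this is a routine induction that could reasonably be left to the reader as standard.
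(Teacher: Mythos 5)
Your proof is correct and follows essentially the same route as the paper: biadditivity of the scalar product gives self-orthogonality, and an induction using the polarization identity $q(\x+\y)=q(\x)+q(\y)+(\x,\y)$ gives evenness (the paper's induction invokes the already-established self-orthogonality of the span at each step, where you expand all cross terms and use pairwise orthogonality directly---a negligible difference). If anything you are slightly more careful than the paper, whose first part silently needs the diagonal terms $(s,s)$ to vanish, a point you justify explicitly via the alternating property of the polar form in characteristic~$2$.
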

\begin{proof}Let $C={\rm span}(S)$. Consider $x$, $y \in C$. Then
$x={\sum_{i=1}^m c_i}$ and $y={\sum_{j=1}^l d_j}$ for some $c_i$, $d_j\in S$, $m$, $l \in \bf{N}$.
One gets $(x,y)=\sum_{i=1}^m\sum_{j=1}^l (c_i,d_j)= 0$.
Hence, $C\subset C^{\bot}$, so $C$ is self-orthogonal.

To prove the second part of the lemma, assume that the vectors in
$S$ are even and pairwise orthogonal.  We need to show that $q(x)=0$
for all $x=\sum_{i=1}^m c_i\in C$.
We proceed by induction on $m\in{\bf N}$.
For $m=0$, we have $q(0)=0$.
Assume that the statement is true for the sum of $k$ elements in $S$ and let $m=k+1$.
Since $C$ is self-orthogonal one has $(c_{k+1},\sum_{i=1}^k c_i)=0$.
By this and the definition of scalar product, we see that
$$q(c_{k+1} + \sum_{i=1}^k c_i)= q(c_{k+1}) + q(\sum_{i=1}^k c_i).$$
Using the inductive assumption and $q(c_{k+1})=0$, the induction step is finished.
\end{proof}


\subsection{Examples of $L$-codes}\label{exOfLcodes}

In the future, when we refer to a {\it code,\/} we sometimes mean an
\textit{equivalence class of codes.}


Some basic examples of self-orthogonal $L$-codes together with a generating set, their basic properties
and their symmetric and Euclidean weight enumerators are given in Table~\ref{examplesLcodes}.
The subscripts on these codes indicate the code length.
Note that the codewords of $\Delta_l$ are all words with an even
number of coordinates $1$ and remaining coordinates~$0$.
The codes $\Upsilon_2$ and $\Upsilon_3$ consist of the sets
$\{(0,0),\,(\omega,1),\,(1,\omega),\,(\bar{\omega},\bar{\omega})\}$
and $\{(0,0,0),\,(1,1,\omega),\,(1,\omega,1),\,(\omega,1,1),$
$(\bar{\omega},\bar{\omega},0),\, (\bar{\omega},0,\bar{\omega}),$
$(0,\bar{\omega},\bar{\omega}),\,(\omega,\omega,\omega)\}$,
respectively.

\begin{table}\caption{Examples of $L$-codes}\label{examplesLcodes}\small
$$\begin{array}{clccccc}
\renewcommand{\arraystretch}{1.3}
C & \hbox{generators} & \hbox{dim} &  {\rm type} & {\rm Aut}(C) & {\rm swe}_C & {\rm EW}_C \\
\hline
\Gamma_1 & \begin{array}{l}(1)\end{array} & \halb &   {\rm N} & S_2     & x+y & a^2+ab  \\
\Xi_1    & \begin{array}{l}(\omega)\end{array} & \halb &  {\rm Y} & \{{\rm id}\}      & x+z & a^2+b^2  \\[2mm]
\Delta_l &  {\renewcommand{\arraystretch}{0.8}\begin{array}{l}
 (1,\,1,\, 0,\,\,\ldots,\, 0),\\ (0,\, 1,\, 1,\, 0,\, \ldots,\, 0), \\\cdots, \\ (0,\,\ldots,\,0,\,1,\ 1 )\\ \end{array} }  &
       \frac{l-2}{2} & {\rm Y} & {\rm S}_2^l{:}S_l & \frac{1}{2}[(x+y)^l+(x-y)^l] & \frac{1}{2}a^l[(a+b)^l+(a-b)^l]  \\[9mm]
\Upsilon_2 & {\renewcommand{\arraystretch}{0.8}\begin{array}{l}
(1,\, \omega), \\ (\omega,\, 1) \end{array} } & 1      & {\rm  N} & S_2         & x^2+2yz+2z^2 & a^4+2ab^3+2b^4  \\[4mm]
\Upsilon_3 &  {\renewcommand{\arraystretch}{0.8}\begin{array}{l}
(1,\,1,\,\omega), \\ (1,\,\omega,\,1), \\ (\omega,\,1,\,1) \end{array}}   & \frac{3}{2} & {\rm Y} & S_3  & x^3+3xz^2+ 3y^2z+z^3 & a^6+6a^2b^4+b^6
\end{array}$$
\end{table}


\section{Relation to Kleinian codes and binary codes}\label{relations}

The theory of $L$-codes is closely related to the theory of Kleinian
codes.  Some useful maps between $L$-codes, Kleinian codes and binary codes
will be defined.

\smallskip

We recall some basic definitions and notations for Kleinian codes from~\cite{Ho-kleinian}.
We let $K$ again be the Kleinian four group and denote the elements of
$K$ by 0, $a$, $b$, and~$c$. The essential difference to $L$-codes is that
we consider another quadratic form on $K$, namely $q:K\longrightarrow \F_2$,
$q(0)=0$ and $q(a)=q(b)=q(c)=1$. The quadratic form is now symmetric with respect to the
whole automorphism group $S_3$ of $K$. However, the induced biadditive form
$\cdot:K\times K\longrightarrow \F_2$, $x \cdot y = q(x+y)-q(x)-q(y)$ is isomorphic
with the one on $L$.

Codes over $K$ are defined analogous to $L$-codes.
We call these codes Kleinian codes as in~\cite{Ho-kleinian}. 
All Kleinian codes are assumed to be linear.
Hamming weight and minimal Hamming
weight are also defined the same way as their $L$-code analogues.
Since we do not define the Euclidean weight for Kleinian codes, we denote by
$d(C)$ the minimal Hamming weight of a Kleinian code $C$.
Kleinian codes are called {\it even\/} if the (Hamming) weights of all codewords
are divisible by~$2$. Since also the scalar products on
$K^n$ and $L^n$ are isomorphic, the notations of self-orthogonal and
self-dual codes coincide.
Consider the automorphisms acting on $K^n$ that form the semidirect product
$H=S_3^n{:}S_n$, consisting of the permutation of the coordinates
together with a permutation of the symbols $a$, $b$ and $c$ at each
position. The automorphism of a Kleinian code $C$ is the subgroup of
$H$ sending $C$ to itself. Two Kleinian codes $C$ and $D$ are
equivalent if there is a $g\in H$ such that $g\,C=D$.
Usually, we will denote specific Kleinian codes by small Greek letters.


\subsection{The map $\phi$}\label{sectionphi}

In this section, we consider a map between $L$-codes and Kleinian codes of the same length.
\begin{defi}
The map $\phi:L^n\rightarrow K^n$ is defined coordinate-wise:
$$(c_1,\,\ldots,\,c_n)\mapsto (d_{1},\,\ldots,\,d_n)$$
where $c_i\mapsto d_i$ is determined by
$0\mapsto 0$, $1\mapsto a$, $\omega\mapsto b$ and $\bar{\omega}\mapsto c$.
\end{defi}

The following remarks follow from what was said before about Kleinian codes.
\begin{rem} \label{basicphi}The map $\phi$ is a bijective homomorphism that preserves the scalar product.
\end{rem}
If $C\subset L^n$ is a (linear) $L$-code, then $\phi(C)$ is a
(linear) Kleinian code, so one can consider
$\phi$ to be a map on codes.
\begin{rem}\label{Wphi} For an $L$-code $C$, $${\rm W}_C(u,v)={\rm W}_{\phi(C)}(u,v).$$
\end{rem}
\begin{rem} \label{selfdualphi} The map $\phi$ sends self-orthogonal (self-dual) codes
in $L^n$ to self-orthogonal (self-dual) codes in $K^n$.  The map
$\phi^{-1}$ behaves similarly.
\end{rem}
Note that if two $L$-codes $C$ and $D$ are equivalent in the sense of $L$-codes,
then $\phi(C)$ and $\phi(D)$ are equivalent as Kleinian codes, but not vice versa.

\begin{cor} The map $\phi$ maps equivalence classes of
self-orthogonal (self-dual) codes in $L^n$ to self-orthogonal
(self-dual) codes in $K^n$.
\end{cor}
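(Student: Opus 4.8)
The plan is to deduce the corollary by combining two facts already in hand: that $\phi$ preserves self-orthogonality (and self-duality), which is Remark~\ref{selfdualphi}, and that $\phi$ respects code equivalence, which is the observation recorded immediately above the statement. The only content beyond these two remarks is to package them into the assertion that $\phi$ descends to a \emph{well-defined} map on equivalence classes whose image consists of self-orthogonal (self-dual) Kleinian codes.

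First I would make the equivariance of $\phi$ with respect to the symmetry groups $G=S_2^n{:}S_n$ and $H=S_3^n{:}S_n$ precise. Since $\phi$ is defined coordinate-wise through the bijection $0\mapsto 0$, $1\mapsto a$, $\o\mapsto b$, $\bo\mapsto c$, the position-wise generator of $G$ swapping $\o$ and $\bo$ corresponds under $\phi$ exactly to the transposition exchanging $b$ and $c$ in the copy of $S_3$ at that position, while a coordinate permutation in the $S_n$-factor corresponds to the same permutation in $H$. This determines an injective homomorphism $\iota\colon G\hookrightarrow H$, and checking on these generators coordinate-wise yields the equivariance $\phi(g\,\x)=\iota(g)\,\phi(\x)$ for all $g\in G$ and $\x\in L^n$.

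With this in hand the corollary is immediate. Let $C\subset L^n$ be self-orthogonal; by Remark~\ref{selfdualphi} the image $\phi(C)$ is a self-orthogonal Kleinian code. If $D$ is equivalent to $C$, say $D=g\,C$ with $g\in G$, then equivariance gives $\phi(D)=\phi(g\,C)=\iota(g)\,\phi(C)$, so $\phi(C)$ and $\phi(D)$ are equivalent as Kleinian codes. Hence the assignment $[C]\mapsto[\phi(C)]$ is well defined on equivalence classes and lands among self-orthogonal codes. Replacing ``self-orthogonal'' by ``self-dual'' throughout, and invoking the self-dual clause of Remark~\ref{selfdualphi}, gives the same conclusion for self-dual codes.

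There is no genuine obstacle here; the statement is a formal consequence of the preceding remarks. The only point requiring a moment's care is the equivariance of $\phi$, namely that the position-wise $S_2$-symmetry of $L$ is carried into the position-wise $S_3$-symmetry of $K$ precisely as the transposition $(b\,c)$. Once this is verified on generators, the well-definedness on equivalence classes and the preservation of self-orthogonality both follow automatically.
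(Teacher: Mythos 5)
Your proposal is correct and follows essentially the same route as the paper, which states the corollary as an immediate consequence of Remark~\ref{selfdualphi} together with the preceding observation that equivalent $L$-codes have equivalent images under $\phi$. The only difference is that you make explicit the equivariance $\phi(g\,\x)=\iota(g)\,\phi(\x)$ via the embedding $\iota\colon G\hookrightarrow H$ (sending the position-wise swap of $\o$ and $\bo$ to the transposition exchanging $b$ and $c$), which the paper leaves implicit; this is a sound and welcome clarification rather than a different argument.
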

We write $[E]$ for the equivalence class of an $L$-code resp.~Kleinian code $E$.
Let $D$ now be a Kleinian code. Then there may be several equivalence classes of $L$-codes
which are mapped by $\phi$ to the same equivalence class $[D]$.
More precisely, we have:
\begin{lem} \label{Phiorbitformula}
For a fixed Kleinian code $D$ one has
$$\frac{|H|}{|{\rm Aut}(D)|}=\sum_{\phi([C])\in [D]}\frac{|G|}{|{\rm Aut}(C)|},$$
where $[C]$ runs though all equivalence classes of $L$-codes which are mapped by~$\phi$
to~$[D]$.
\end{lem}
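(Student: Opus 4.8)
The plan is to reinterpret both sides as orbit sizes under group actions and then invoke the orbit--stabilizer theorem, exploiting that $\phi$ identifies the two ambient spaces in such a way that the $L$-code symmetry group becomes a subgroup of the Kleinian symmetry group. First I would recall the two orbit-counting facts already at hand: by orbit--stabilizer (the same reasoning that produced the count $\frac{2^n\cdot n!}{|{\rm Aut}(C)|}$ recorded earlier, noting $|G|=2^n\cdot n!$), the number of $L$-codes in the equivalence class $[C]$ is $\frac{|G|}{|{\rm Aut}(C)|}$, and likewise the number of Kleinian codes in $[D]$ is $\frac{|H|}{|{\rm Aut}(D)|}$. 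Thus the asserted identity is equivalent to the statement that the $H$-orbit of $D$ is partitioned by the images under $\phi$ of the $G$-orbits of exactly those $L$-codes $C$ with $\phi([C])\in[D]$.

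Next I would pin down the identification. Since $\phi$ is a bijective homomorphism (Remark~\ref{basicphi}), it is a bijection of the underlying sets $L^n$ and $K^n$ and carries linear $L$-codes bijectively onto linear Kleinian codes, with $\phi^{-1}$ doing the same in reverse. Transporting the $G$-action on $L^n$ through $\phi$, the coordinate permutations act identically on $K^n$, while the local factor $S_2$ interchanging $\omega$ and $\bar\omega$ becomes the transposition $b\leftrightarrow c$ inside $S_3$. Hence $\phi$ conjugates $G=S_2^n{:}S_n$ onto the subgroup $G'\le H=S_3^n{:}S_n$ that fixes the symbol $a$ at each coordinate, and $\phi$ intertwines the $G$-action on $L$-codes with the $G'$-action on Kleinian codes. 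In particular $\phi$ sends $G$-equivalence classes of $L$-codes bijectively onto $G'$-orbits of Kleinian codes, and $\phi([C])\in[D]$ means precisely that the $G'$-orbit $\phi([C])$ is contained in the $H$-orbit $[D]$.

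The core of the argument is then the standard fact that the orbits of a subgroup refine the orbits of the whole group: because $G'\le H$, every $G'$-orbit of Kleinian codes lies in a unique $H$-orbit, so the $G'$-orbits contained in $[D]$ form a partition of $[D]$. These are exactly the images $\phi([C])$ with $\phi([C])\in[D]$; moreover every Kleinian code in $[D]$ is linear and hence equals $\phi(C)$ for some linear $L$-code $C$, so no part of $[D]$ is missed. Summing the cardinalities of these $G'$-orbits recovers $|[D]|=\frac{|H|}{|{\rm Aut}(D)|}$, while each such cardinality equals the size $\frac{|G|}{|{\rm Aut}(C)|}$ of the corresponding $G$-orbit, which yields the claimed formula.

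The main obstacle is the bookkeeping in the middle step rather than any deep idea: one must check carefully that the transported group $G'$ genuinely is a subgroup of $H$ (so that the refinement of orbits applies), and that the condition $\phi([C])\in[D]$ selects exactly the $G'$-orbits lying inside $[D]$, with the surjectivity of $\phi$ onto linear Kleinian codes guaranteeing that these orbits exhaust $[D]$. Once these identifications are fixed, the equality is simply the partition of an $H$-orbit into $G'$-orbits combined with orbit--stabilizer.
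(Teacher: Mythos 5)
Your proof is correct and follows essentially the same route as the paper, which simply notes that the $H$-orbit $[D]$ splits into $G$-orbits under the $G$-action induced by $\phi$ and then applies the orbit formula. Your write-up just makes explicit what the paper leaves implicit: that $\phi$ conjugates $G$ onto the subgroup $G'\le H$ fixing the symbol $a$ coordinatewise, so the splitting is the standard refinement of an $H$-orbit into orbits of a subgroup, with orbit--stabilizer supplying the cardinalities.
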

Note that the order of the automorphism group of a code depends only on its equivalence class.
\begin{proof}The $H$-orbit $[D]$ of $D$ splits into $G$-orbits under the $G$-action
induced by $\phi$. The orbit formula for group actions now implies the result.
\end{proof}

To investigate the different $L$-code equivalence classes corresponding to a given
Kleinian code~$D$, it is useful to introduce the concept of a
{\it marking\/} of a Kleinian code (cf.~Section~7 of \cite{Ho-kleinian}).

\begin{defi}{\rm A {\it marking\/} for a Kleinian code $D\subset K^n$ is the choice of a vector
${\mathcal M }\in(K\setminus\{0\})^n$. The automorphism group ${\rm Aut}_D({\mathcal M})$ of a
marking ${\mathcal M}$ with respect to $D$ are the elements of
${\rm Aut}(D)$ that leave $\mathcal{M}$ fixed. Two markings
$\mathcal{M}_1$ and $\mathcal{M}_2$ are called equivalent if there exists
an $g\in{\rm Aut}_D(\mathcal{M})$ such that
$g\,\mathcal{M}_1=\mathcal{M}_2$.}
\end{defi}

\begin{lem}\label{countMarkings}
Let $D$ be a Kleinian code of length $n$ and $\mathcal{M}$ be a marking of $D$.
Denote the set of markings of $D$ by $\mathfrak{M}$, and the
${\rm Aut}(D)$-orbit of $\mathcal{M}$ in $\mathfrak{M}$ by
$[\mathcal{M}]$. Then for the number of markings of $D$ one has
$$|\,\mathfrak{M}|=3^n=\sum_{[\mathcal{M}]}\frac{|{\rm Aut}(D)|}{|{\rm Aut}_D(\mathcal{M})|},$$
where the sum is over equivalence classes of markings.
\end{lem}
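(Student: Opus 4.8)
The plan is to read the claimed identity as nothing more than the orbit–counting (Burnside-type) decomposition of the set $\mathfrak{M}$ of markings under the action of ${\rm Aut}(D)$. First I would verify that ${\rm Aut}(D)$, viewed as a subgroup of $H=S_3^n{:}S_n$, genuinely acts on $\mathfrak{M}=(K\setminus\{0\})^n$: an element of $H$ permutes the $n$ coordinates and independently permutes the three nonzero symbols $a,b,c$ at each position, so it sends a vector ${\mathcal M}\in(K\setminus\{0\})^n$ to another such vector. Hence ${\rm Aut}(D)\le H$ restricts to a well-defined action on $\mathfrak M$, and by the definition given just above, the orbits of this action are exactly the equivalence classes $[{\mathcal M}]$ of markings, while the stabilizer of ${\mathcal M}$ is precisely ${\rm Aut}_D({\mathcal M})$.

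The first equality $|\mathfrak M|=3^n$ is immediate: a marking is a choice, independently at each of the $n$ coordinates, of one of the three nonzero elements of $K$, so $|\mathfrak M|=|K\setminus\{0\}|^n=3^n$. For the second equality I would invoke the orbit–stabilizer theorem: for each class $[{\mathcal M}]$ the orbit has size $|{\rm Aut}(D)|/|{\rm Aut}_D({\mathcal M})|$. Since $\mathfrak M$ is the disjoint union of its ${\rm Aut}(D)$-orbits, summing the orbit sizes over a set of representatives, one per equivalence class, recovers $|\mathfrak M|$. This gives
$$
3^n=|\mathfrak M|=\sum_{[{\mathcal M}]}\frac{|{\rm Aut}(D)|}{|{\rm Aut}_D({\mathcal M})|},
$$
which is exactly the asserted formula.

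There is no real obstacle here; this is the same orbit-counting mechanism already used in the proof of Lemma~\ref{Phiorbitformula}. The only point needing a moment's care is the bookkeeping convention: one must note that $|{\rm Aut}_D({\mathcal M})|$ depends only on the equivalence class $[{\mathcal M}]$ (conjugate stabilizers have equal order, since $g$ sends the stabilizer of ${\mathcal M}$ isomorphically to the stabilizer of $g\,{\mathcal M}$), so that writing a single term per class is legitimate and the sum is well defined independently of the chosen representatives. Once that is observed, the statement is a direct application of orbit–stabilizer, and I would keep the write-up to a couple of sentences.
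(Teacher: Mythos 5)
Your proposal is correct and is exactly the paper's argument: the paper's proof is the one-line observation that the identity is the orbit formula for ${\rm Aut}(D)$ acting on the set of markings, which you have simply written out in full (the count $3^n$, orbit--stabilizer, and well-definedness of the summands). No difference in approach, only in level of detail.
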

\begin{proof} This is an application of the orbit formula for the group ${\rm Aut}(D)$
acting on the set of markings.
\end{proof}

\begin{lem}\label{one2one} The equivalence classes of markings of a
Kleinian code $D$ are in a one-to-one correspondence with the equivalence
classes of $L$-codes $C$ with $[\phi(C)]=[D]$.
\end{lem}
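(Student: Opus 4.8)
The plan is to build the correspondence explicitly using a fixed \emph{standard marking} and then to check that it descends to, and is a bijection between, the two sets of equivalence classes. Write $\mathcal{M}_0=(a,\dots,a)=\phi(1,\dots,1)\in K^n$ for the standard marking. The structural fact driving everything is that the subgroup $S_3^n\subset H$ already acts transitively on $(K\setminus\{0\})^n$, while the stabilizer of $\mathcal{M}_0$ in $H$ is exactly the subgroup $S_2^n{:}S_n$ that permutes coordinates, fixes $a$, and swaps $b\leftrightarrow c$. Under $\phi$ this stabilizer is canonically identified with the $L$-code automorphism group $G=S_2^n{:}S_n$, and $\phi$ intertwines the $G$-action on $L^n$ with the action of this stabilizer on $K^n$. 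Given a marking $\mathcal{M}$ of $D$, I choose $g\in H$ with $g\,\mathcal{M}=\mathcal{M}_0$ (possible by transitivity) and set $C_{\mathcal{M}}:=\phi^{-1}(g\,D)$, which is an $L$-code by Remark~\ref{basicphi}.

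Next I would verify that $[C_{\mathcal{M}}]$ is well defined and lands in the correct target set. The only freedom in the construction is the choice of $g$, which is determined up to left multiplication by ${\rm Stab}_H(\mathcal{M}_0)$; since this stabilizer corresponds under $\phi$ to $G$, two admissible choices of $g$ yield $G$-equivalent $L$-codes, so $[C_{\mathcal{M}}]$ depends only on $\mathcal{M}$. Because $\phi(C_{\mathcal{M}})=g\,D$ is $H$-equivalent to $D$, we have $[\phi(C_{\mathcal{M}})]=[D]$, as required. If $\mathcal{M}'=h\,\mathcal{M}$ with $h\in{\rm Aut}(D)$, then $g'=gh^{-1}$ satisfies $g'\mathcal{M}'=\mathcal{M}_0$, and $C_{\mathcal{M}'}=\phi^{-1}(gh^{-1}D)=\phi^{-1}(g\,D)=C_{\mathcal{M}}$ since $h^{-1}D=D$; thus the assignment descends to a map on ${\rm Aut}(D)$-equivalence classes of markings.

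For surjectivity, given an $L$-code $C$ with $[\phi(C)]=[D]$, pick $h\in H$ with $h\,\phi(C)=D$ and set $\mathcal{M}:=h\,\mathcal{M}_0$; choosing $g=h^{-1}$ gives $C_{\mathcal{M}}=\phi^{-1}(h^{-1}D)=\phi^{-1}(\phi(C))=C$. For injectivity, suppose $C_{\mathcal{M}_1}$ and $C_{\mathcal{M}_2}$ are $G$-equivalent, say $\sigma\,C_{\mathcal{M}_1}=C_{\mathcal{M}_2}$ with $\sigma\in G$; let $\tilde\sigma\in{\rm Stab}_H(\mathcal{M}_0)$ be its image under $\phi$, so that applying $\phi$ gives $\tilde\sigma\,g_1 D=g_2 D$, where $g_i\mathcal{M}_i=\mathcal{M}_0$. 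Then $h:=g_2^{-1}\tilde\sigma g_1$ lies in ${\rm Aut}(D)$, and $h\,\mathcal{M}_1=g_2^{-1}\tilde\sigma g_1\mathcal{M}_1=g_2^{-1}\tilde\sigma\mathcal{M}_0=g_2^{-1}\mathcal{M}_0=\mathcal{M}_2$, so $\mathcal{M}_1$ and $\mathcal{M}_2$ are equivalent markings.

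I expect the main obstacle to be the bookkeeping around the identification ${\rm Stab}_H(\mathcal{M}_0)\cong G$ and the way $\phi$ intertwines the two group actions: once that is pinned down precisely, well-definedness and both directions of the bijection follow by short orbit-chasing arguments. A secondary point to state carefully is that the equivalence of markings is to be read as lying in a common ${\rm Aut}(D)$-orbit, consistent with Lemma~\ref{countMarkings}.
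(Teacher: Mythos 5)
Your proof is correct and is essentially the paper's argument: the paper identifies both sets abstractly with the double coset space ${\rm Aut}(D)\backslash H/G$, using exactly the two facts you isolate (the stabilizer of the standard marking $\mathcal{M}_0=(a,\dots,a)$ in $H$ is $G$ under the $\phi$-identification, and the stabilizer of $D$ is ${\rm Aut}(D)$). Your version simply unpacks that double-coset bijection into an explicit map $\mathcal{M}\mapsto \phi^{-1}(g\,D)$ with element-by-element checks of well-definedness, injectivity and surjectivity, which is a more detailed rendering of the same idea rather than a different route.
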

\begin{proof} Both sets can be identified with double cosets ${\rm Aut}(D) \backslash H/G$ as follows:
The $H$-orbit of $D$ is the set $[D]$ of all Kleinian codes equivalent to $D$ and the stabilizer of
$D$ is ${\rm Aut}(D)$.  This identifies $[D]$ with ${\rm Aut}(D) \backslash H$. The equivalence classes of $L$-codes
are then identified with the orbits of the natural $G$-action on ${\rm Aut}(D) \backslash H$,
cf.~Lemma~\ref{Phiorbitformula}.

Consider the marking $\mathcal{M}=(a,\,a,\,\ldots,\,a)$. Then the $H$-orbit of $\mathcal{M}$
is the set $\mathfrak{M}$ of all markings and the stabilizer of $\mathcal{M}$ is $G$. This
identifies $\mathfrak{M}$ with $H/G$. The equivalence classes of markings are then identified
with the orbits of the natural ${\rm Aut}(D$)-action on $H/G$, cf.~Lemma~\ref{countMarkings}.
\end{proof}

Given a marking $\mathcal{M}$ for a Kleinian code $D$, an $L$-code $C$ with $[\phi(C)]=[D]$ and
belonging to the $L$-equivalence class $[C]$ corresponding to the ${\rm Aut}(D)$-class $[\mathcal{M}]$ can be constructed
by the map $\phi_\mathcal{M}^{-1}:=\phi^{-1}\circ \mu_\mathcal{M}: K^n\longrightarrow L^n$, where
$\mu_\mathcal{M}:K^n\longrightarrow K^n$, $\x\mapsto \mu_\mathcal{M}(\x)$, is defined for each coordinate $i=1$, $\ldots$, $n$ as the
product~$\mathcal{M}_i\cdot x_i$ and $K$ is identified with the field $\F_4$ of four elements
such that $a\in K$ is the multiplicative identity. Then $C=\phi_\mathcal{M}^{-1}(D)$.
Note that $\phi_\mathcal{M}^{-1}=\phi^{-1}$ for the {\it standard marking\/} $\mathcal{M}=(a,\,\ldots,\,a)$.

\begin{rem} The maps $\phi$ and $\phi_{\mathcal{M}}^{-1}$ do not necessarily map even codes to even codes.
\end{rem}
For example, for the even $L$-code $\Xi_1$ we have $\phi(\Xi_1)=\{(0),(b)\}\cong \gamma_1$, which is
an odd Kleinian code, and for the even Kleinian code $\epsilon_2$ we have $\phi_{(a,b)}^{-1}(\epsilon_2)=\Upsilon_2$,
which is an odd $L$-code.

\medskip

\noindent{\bf Example:\/} We illustrate the above results for the Hexacode ${\cal C}_6$ considered
as a Kleinian code. The orbits of ${\rm Aut}({\cal C}_6)$ on $K^6$ have been determined in
Table~4~\cite{Ho-kleinian}. Since a marking for a Kleinian code of length $n$ is the same as a vector
in $K^n$ of weight~$n$, we can read off from this table that there are exactly five equivalence
classes of markings for ${\cal C}_6$ having size $18$,  $180$, $216$, $45$ and $270$.
Table~\ref{HexaLcodes} lists the order of the automorphism group and the coefficients of the Euclidean weight enumerator
of the corresponding $L$-codes.
The $L$-code no.~1 has minimal Euclidean weight~$6$ and we have chosen the generators
$(\bo,   \o,   \o,   0,   0,   \o)$,
$(  \o, \bo,   0,   \o,   0,   \o)$,
$(  \o,   0, \bo,   0,   \o,   \o)$,
$(  0,   \o,   0, \bo,   \o,   \o)$,
$(  0,   0,   \o,   \o, \bo,   \o)$ and
$(  \o,   \o,   \o,   \o,   \o, \bo)$.

\begin{table}\caption{$L$-codes mapped by $\phi$ to the Hexacode ${\cal C}_6$}\label{HexaLcodes}\small
$$\begin{array}{ccc|rrrrrrrrrrrrr}
{\rm No.\/}  & \!\!|{\rm Aut}(C)|\!\! & \mathcal{M} &   E_0 & E_1 & E_2 & E_3 & E_4 & E_5 & E_6 & E_7 & E_8 & E_9 & E_{10} & E_{11} & E_{12}  \\ \hline
1 &  120 & (aaaaaa) & 1 & 0 & 0 & 0 & 0 & 0 & 31 &  0 & 15 & 0 & 15 & 0 & 2 \\
2 &   12 & (abccba) & 1 & 0 & 0 & 0 & 0 & 6 & 12 & 18 & 12 & 2 &  6 & 6 & 1 \\
3 &   10 & (baacbc) & 1 & 0 & 0 & 0 & 0 & 5 & 15 & 16 & 10 & 5 &  5 & 6 & 1 \\
4 &   48 & (bccaac) & 1 & 0 & 0 & 0 & 3 & 0 & 12 & 24 &  6 & 8 &  6 & 0 & 4 \\
5 &    8 & (bccbac)  & 1 & 0 & 0 & 0 & 1 & 4 & 12 & 20 & 10 & 4 &  6 & 4 & 2
\end{array} $$
\end{table}

\medskip

As explained in~\cite{Ho-kleinian}, Section~7, a marking of a Kleinian code is the analog of
the concept of a marking of a binary code, a $D_1$-frame of a lattice or a Virasoro frame
of a vertex operator algebra. Continuing this analogy with $L$-codes, one sees that for
an $L$-code there would be only one choice of what one could call a marking of an $L$-code.
Thus there is no additional structure here to consider for $L$-codes.


\subsection{The map $\sigma$}\label{sigmasection}
In this section, we consider a map between $L$-codes and Kleinian codes which doubles
the length.

\begin{defi}
For an $L$-code $C$ of length $n$, the Kleinian code $\sigma(C)$
of length~$2n$ is defined by $\sigma(C):=\widetilde{C}+\delta_2^n,$
where \ $\widetilde{\hskip0in}:L^n\rightarrow K^{2n}$ is the map defined
coordinate-wise:
$$(c_1,\,\ldots,\,c_n)\mapsto (d_{1,1},\,d_{1,2},\,\ldots,\,d_{n,1},\,d_{n,2})$$
with $c_i\mapsto (d_{i,1},\,d_{i,2})$ is determined by
$0\mapsto(0,0)$, $1\mapsto(0,a)$, $\omega\mapsto(b,b)$
and $\bar{\omega}\mapsto(b,c)$,
and where $\delta_2^n=\{(0,0),(a,a)\}^n$.
\end{defi}
Thus, every codeword in $C$ is replaced with $2^n$ codewords in $K^{2n}$.

\begin{rem}
Two $L$-codes $C$ and $D$ are equivalent if and only if the Kleinian codes
$\sigma(C)$ and $\sigma(D)$ are equivalent.
\end{rem}

\begin{rem}
$\sigma(C\oplus D)=\sigma(C)\oplus \sigma(D)$.
\end{rem}

\begin{rem}
The code $\sigma(C)$ is a linear when $C$ is a linear code.
\end{rem}
Indeed, let $x$, $y\in L^n$. For each
$s\in\sigma(x_i)$ and $t\in\sigma(y_i)$, there exists an
\hbox{$u\in\sigma(x_i+y_i)$} such that $s+t=u$.

\begin{rem}\label{sigmaident}
One has $\sigma(L^n)= \left(\delta_2^{\bot}\right)^n$
and $\sigma(\{{\bf 0}\})=\delta_2^n,$ thus the map $\sigma$ provides
an identification of $L^n$ with $\left(\delta_2^{\bot}/\delta_2\right)^n$.
\end{rem}
\begin{proof} Note that $\sigma(L)=\{(0,0),(0,a),(a,0),(a,a),(b,b),(b,c),(c,b),(c,c)\}=\delta_2^{\bot}.$
\end{proof}

\begin{lem}\label{Wsigma} We have $${\rm W}_{\sigma(C)}(u,v)={\rm swe}_C(u^2+v^2,2uv,2v^2).$$
\end{lem}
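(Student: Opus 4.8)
The plan is to exploit the coset structure of $\sigma(C)$ together with the multiplicativity of the Hamming weight enumerator over the coordinates. The key observation is that summing over the translates $\delta_2^n$ factors into a product over the $n$ pairs of Kleinian coordinates, and that each factor only depends on whether the original coordinate $c_i$ is $0$, equal to $1$, or equal to $\omega$ or $\bar\omega$ — which is precisely the information recorded by $\mathrm{swe}_C$.

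First I would establish that $\sigma(C)$ is the disjoint union $\bigsqcup_{c\in C}(\widetilde c+\delta_2^n)$, so that the map $C\times\delta_2^n\to\sigma(C)$, $(c,\delta)\mapsto\widetilde c+\delta$, is a bijection. This is exactly the content of Remark~\ref{sigmaident}: since $\sigma$ identifies $L^n$ with $(\delta_2^{\bot}/\delta_2)^n$, the cosets $\widetilde c+\delta_2^n$ are distinct, hence disjoint, for distinct $c\in L^n$. Consequently every codeword of $\sigma(C)$ is written uniquely as $\widetilde c+\delta$, and the weight enumerator may be computed as $\mathrm{W}_{\sigma(C)}(u,v)=\sum_{c\in C}\sum_{\delta\in\delta_2^n} u^{2n-\mathrm{wt}(\widetilde c+\delta)}\,v^{\mathrm{wt}(\widetilde c+\delta)}$ with no overcounting.

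Next, because the Hamming weight is additive over the $n$ pairs of coordinates and $\delta_2^n=\prod_{i=1}^n\delta_2$, the inner sum factors as $\prod_{i=1}^n\Big(\sum_{\delta_i\in\delta_2} u^{2-w_i}v^{w_i}\Big)$, where $w_i$ is the Hamming weight of $\widetilde{c_i}+\delta_i\in K^2$. I would then carry out the routine per-coordinate case analysis, using $\delta_2=\{(0,0),(a,a)\}$ and the Kleinian addition rules ($a+a=0$, $a+b=c$, and so on): for $c_i=0$ one obtains $(0,0)$ and $(a,a)$, contributing $u^2+v^2$; for $c_i=1$ one obtains $(0,a)$ and $(a,0)$, each of weight one, contributing $2uv$; and for $c_i=\omega$ one obtains the weight-two vectors $(b,b),(c,c)$ while $c_i=\bar\omega$ gives $(b,c),(c,b)$, both contributing $2v^2$.

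Finally I would assemble these factors: a codeword $c$ with $i$ coordinates equal to $1$ and $j$ coordinates equal to $\omega$ or $\bar\omega$ (and the remaining $n-i-j$ equal to $0$) contributes $(u^2+v^2)^{n-i-j}(2uv)^i(2v^2)^j$. Summing over $c\in C$ and comparing with the definition $\mathrm{swe}_C(x,y,z)=\sum_{i,j}S_{i,j}\,x^{n-i-j}y^iz^j$ identifies the total with $\mathrm{swe}_C(u^2+v^2,\,2uv,\,2v^2)$, as claimed. The only genuinely delicate point is the disjointness in the first step, needed so that no codeword of $\sigma(C)$ is counted more than once; everything afterward is the bookkeeping of the coordinate-wise substitution $x\mapsto u^2+v^2$, $y\mapsto 2uv$, $z\mapsto 2v^2$.
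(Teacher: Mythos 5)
Your proof is correct and follows essentially the same route as the paper's: both compute ${\rm W}_{\sigma(C)}$ coset by coset, using the per-coordinate images $0\mapsto\{(0,0),(a,a)\}$, $1\mapsto\{(0,a),(a,0)\}$, $\omega\mapsto\{(b,b),(c,c)\}$, $\bar\omega\mapsto\{(b,c),(c,b)\}$ to read off the substitutions $x\mapsto u^2+v^2$, $y\mapsto 2uv$, $z\mapsto 2v^2$. The only difference is that you make explicit the disjointness of the cosets $\widetilde{c}+\delta_2^n$ and the factorization of the inner sum, which the paper leaves implicit; this is a welcome but not essential elaboration.
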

\begin{proof} The map $\sigma$ sends $\{0\}\subset L$ to $\{(0,0),\, (a,a)\}$,
$\{1\}$ to $\{(0,a),\, (a,0)\}$, $\{\omega\}$ to $\{(b,b),\,(c,c)\}$, and
$\{\bar{\omega}\}$ to $\{(b,c),\,(c,b)\}$.
%
Thus, a codeword represented by $x^ry^sz^t$ in the
symmetric weight enumerator of $C$ is mapped to $2^n$ codewords,
represented by $(u^2+v^2)^r(2uv)^s(2v^2)^t$ in the Hamming weight
enumerator of $\sigma(C)$.
\end{proof}

\begin{lem}\label{simgaprop} The map $\sigma$ maps self-dual $L$-codes
to self-dual Kleinian codes, and even codes to even codes.
\end{lem}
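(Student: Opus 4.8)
The plan is to exhibit $\sigma$ as an isometry from $(L^n,(\,.\,,\,.\,),q)$ onto the ``glue space'' $(\delta_2^{\bot}/\delta_2)^n$ carrying the scalar product and the mod-$2$ Hamming weight inherited from $K^{2n}$, and then to read off both assertions from the matching of these structures. First I would record the coordinate-wise behaviour of $\sigma$. Writing $\widetilde{x}$ for the representative in $K^2$ assigned to $x\in L$ in the definition of $\sigma$, a direct check on the four values $0$, $1$, $\o$, $\bo$ shows that in $K^2$ one has $(\widetilde{x},\widetilde{y})=x\cdot y$ (the $L$-dot product) and $\mathrm{wt}(\widetilde{x})\equiv |x|^2\pmod 2$; for instance $(\widetilde\o,\widetilde{\bo})=((b,b),(b,c))=b\cdot b+b\cdot c=1=\o\cdot\bo$, while $\mathrm{wt}(\widetilde\o)=\mathrm{wt}((b,b))=2\equiv 0=|\o|^2$. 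Summing over the $n$ coordinates gives $(\widetilde{\x},\widetilde{\y})=(\x,\y)$ and $\mathrm{wt}(\widetilde{\x})\equiv\mathrm{ewt}(\x)\pmod 2$ for all $\x,\y\in L^n$.

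Next I would check that the $\delta_2^n$-translates built into $\sigma(C)$ do not disturb either quantity, which is the one place that needs care. By Remark~\ref{sigmaident} we have $\widetilde{\x},\widetilde{\y}\in(\delta_2^{\bot})^n=(\delta_2^n)^{\bot}$, and since $\delta_2$ is self-orthogonal we have $\delta_2^n\subset(\delta_2^n)^{\bot}$; hence for any $d,d'\in\delta_2^n$ all mixed terms vanish and
$$(\widetilde{\x}+d,\ \widetilde{\y}+d')=(\widetilde{\x},\widetilde{\y})=(\x,\y).$$
For the weights, note that the mod-$2$ Hamming weight is exactly the Kleinian quadratic form, and that it is constant on cosets of $\delta_2^n$ inside $(\delta_2^{\bot})^n$: the words of $\delta_2$ have even weight and are orthogonal to $\delta_2^{\bot}$, so $\mathrm{wt}(\widetilde{\x}+d)\equiv\mathrm{wt}(\widetilde{\x})\equiv\mathrm{ewt}(\x)\pmod 2$. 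Thus the scalar product of, and the weight-parity of, any two words of $\sigma(C)$ depend only on the underlying codewords of $C$ and equal $(\x,\y)$ and $\mathrm{ewt}(\x)\bmod 2$ respectively.

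The two claims now follow formally. If $C$ is even then $\mathrm{ewt}(\x)\equiv 0\pmod 2$ for all $\x\in C$, so every word of $\sigma(C)$ has even Hamming weight and $\sigma(C)$ is an even Kleinian code. If $C$ is self-dual then it is self-orthogonal, so $(\x,\y)=0$ for all $\x,\y\in C$; by the displayed identity any two words of $\sigma(C)$ are orthogonal, i.e.\ $\sigma(C)$ is self-orthogonal. To upgrade this to self-duality I would count cardinalities: by Lemma~\ref{dimensionSelfDual2} a self-dual $C$ has dimension $n/2$, so $|C|=2^n$, and since $\sigma$ replaces each codeword by $2^n$ words we get $|\sigma(C)|=2^n\cdot 2^n=4^n$, exactly the size of a self-dual Kleinian code of length $2n$. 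A self-orthogonal code of this maximal size is self-dual, completing the argument.

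The only genuine obstacle is the bookkeeping of the second paragraph—confirming that passing to the $\delta_2^n$-translates leaves both the scalar product and the weight-parity unchanged—which rests entirely on $\delta_2\subset\delta_2^{\bot}$ together with $\widetilde{\x}\in(\delta_2^n)^{\bot}$; everything else is a finite coordinate-wise verification. As an alternative route to evenness one can invoke Lemma~\ref{Wsigma}: substituting $v\mapsto -v$ in $\mathrm{W}_{\sigma(C)}(u,v)=\mathrm{swe}_C(u^2+v^2,2uv,2v^2)$ introduces a sign $(-1)^i$ on each monomial counting $i$ coordinates equal to $1$, and $C$ is even precisely when no such monomial with $i$ odd occurs, which is equivalent to $\mathrm{W}_{\sigma(C)}(u,-v)=\mathrm{W}_{\sigma(C)}(u,v)$, i.e.\ to $\sigma(C)$ being Hamming even.
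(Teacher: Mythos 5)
Your proof is correct and follows essentially the same route as the paper's: a coordinate-wise verification that $\sigma$ matches the $L$-scalar product with the Kleinian one (and is insensitive to $\delta_2^n$-translates), self-orthogonality of $\sigma(C)$ from that, and the cardinality count $|\sigma(C)|=2^n\cdot|C|=4^n$ via Lemma~\ref{dimensionSelfDual2} to upgrade to self-duality. The only cosmetic difference is in the evenness step, where you argue directly with weight parities (Hamming weight mod $2$ being the Kleinian quadratic form, constant on $\delta_2^n$-cosets of $(\delta_2^{\bot})^n$) while the paper deduces it from the weight-enumerator identity of Lemma~\ref{Wsigma} -- an alternative you also note yourself.
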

\begin{proof} First note that for $x$, $y\in L^n$ one has
$x_i\cdot y_i = z_i\cdot w_i$, for all $z_i\in \sigma(x_i)$ and
$w_i \in \sigma(y_i)$.
This implies that
$$ (x, y) = (z, w) \mbox{ for all }z\in \sigma(x)\mbox{ and }w\in \sigma(y).$$
Thus the identification of $L^n$ with $\left(\delta_2^{\bot}/\delta_2\right)^n$
in~Remark~\ref{sigmaident} identifies the scalar product on $L^n$ with the induced
scalar product on  $\left(\delta_2^{\bot}/\delta_2\right)^n$. In particular,
self-orthogonal codes $C$ are mapped to self-orthogonal codes $\sigma(C)$.
If $C$ is self-dual of length~$n$, it has dimension $\frac{n}{2}$, i.e.~contains $2^n$ codewords
(see Lemma~\ref{dimensionSelfDual2}). Then, $\sigma(C)$ of length $2n$ contains $2^n\cdot2^n=4^n$ codewords,
i.e.~has dimension $n$. Therefore the self-orthogonal code $\sigma(C)$ is self-dual.

For the last statement, suppose $C$ is an (Euclidean) even $L$-code.
Then each codeword contributes a term of the form $x^ry^{2s}z^t$ in the symmetric weight enumerator.
By Lemma~\ref{Wsigma}, this word is mapped to $2^n$ codewords, represented by
$(u^2+v^2)^r(2uv)^{2s}(2v^2)^t$ in the Hamming weight enumerator.
The exponent of $v$ for each codeword is even;
thus, the Kleinian code $\sigma(C)$ is (Hamming) even.
\end{proof}

\begin{flushleft}
\textbf{Examples:}

$\sigma(\Xi_1)=\sigma(\{(0),\,(\omega)\})=\{(0,0),\,(a,a),\,(b,b),\,(c,c)\}=\epsilon_2$.

$\sigma(\Upsilon_2)=\sigma(\{(0,0),\,(\omega,1),\,(1,\omega),\,(\bar{\omega},\,\bar{\omega})\})=(\delta_2^2)^+$

\end{flushleft}


\subsection{The map $\psi$}
In this section, we consider a map between $L$-codes which doubles the length.

\begin{defi}
For an $L$-codes $C$ of length $n$, the  $L$-code $\psi (C)$
of length~$2n$ is defined by $\psi(C)=\overline{C}+\Delta_2^n,$
where \ $\overline{\vphantom{C}\hskip.1in}:L^n\rightarrow L^{2n}$ is the map defined
coordinate-wise:
$$(c_1,\,\ldots,\,c_n)\mapsto (d_{1,1},\,d_{1,2},\,\ldots,\,d_{n,1},\,d_{n,2})$$
with $c_i\mapsto (d_{i,1},\,d_{i,2})$ is determined by
$0\mapsto(0,0)$, $1\mapsto(0,1)$, $\omega\mapsto(\o,\o)$
and $\bo\mapsto(\o,\bo)$,
and where $\Delta_2^n=\{(0,0),(1,1)\}^n$.
\end{defi}
Thus, every codeword in $C$ is replaced with $2^n$ codewords in $L^{2n}$.

Essentially, $\psi$ is defined such that $\phi\circ\psi=\sigma$.
This determines $\psi$ uniquely since $\phi^{-1}$ exists.

\begin{rem}\label{psiident}
One has $\psi(L^n)= \left(\Delta_2^{\bot}\right)^n$
and $\psi(\{{\bf 0}\})=\Delta_2^n,$ thus the map $\psi$ provides
an identification of $L^n$ with $\left(\Delta_2^{\bot}/\Delta_2\right)^n$.
\end{rem}

\begin{cor} The map $\psi$ maps self-dual codes to self-dual codes,
even codes to even codes, and
$${\rm W}_{\psi(C)}(u,v)={\rm swe}_{C}(u^2+v^2,2uv,2v^2),$$
$${\rm swe}_{\psi(C)}(x,y,z)={\rm swe}_ C(x^2+y^2,2xy,2z^2).$$
\end{cor}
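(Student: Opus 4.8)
The plan is to make the symmetric weight enumerator identity the central computation, deduce the Hamming enumerator and the even-code claim from it, and obtain self-duality separately from the factorization $\phi\circ\psi=\sigma$. First I would prove the identity for ${\rm swe}_{\psi(C)}$ by a coordinate-wise computation exactly parallel to the proof of Lemma~\ref{Wsigma}. Writing $\psi(C)=\overline{C}+\Delta_2^n$, one records the effect in a single coordinate of the bar map followed by adding $\Delta_2=\{(0,0),(1,1)\}$: the symbol $0$ produces $\{(0,0),(1,1)\}$, the symbol $1$ produces $\{(0,1),(1,0)\}$, $\omega$ produces $\{(\omega,\omega),(\bar\omega,\bar\omega)\}$, and $\bar\omega$ produces $\{(\omega,\bar\omega),(\bar\omega,\omega)\}$ (using $1+\omega=\bar\omega$ in $L$). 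Reading off the symmetric weight contributions, where $x$, $y$, $z$ track the symbols $0$, $1$, and $\{\omega,\bar\omega\}$ respectively, the per-coordinate monomials $x$, $y$, $z$ of ${\rm swe}_C$ are replaced by $x^2+y^2$, $2xy$, $2z^2$ in turn. Since $\Delta_2^n$ acts independently on the $n$ blocks, ${\rm swe}_{\psi(C)}$ factors as a product over coordinates, yielding ${\rm swe}_{\psi(C)}(x,y,z)={\rm swe}_C(x^2+y^2,2xy,2z^2)$.

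The Hamming weight enumerator identity then follows by specialization: by Remark~\ref{enumRel} one has ${\rm W}_{\psi(C)}(u,v)={\rm swe}_{\psi(C)}(u,v,v)$, and substituting $x=u$, $y=z=v$ into the formula just proved gives ${\rm swe}_C(u^2+v^2,2uv,2v^2)$ exactly. As an independent cross-check, the same value arises from ${\rm W}_{\psi(C)}={\rm W}_{\phi(\psi(C))}={\rm W}_{\sigma(C)}$ via Remark~\ref{Wphi} and Lemma~\ref{Wsigma}, using $\phi\circ\psi=\sigma$.

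For self-duality I would use $\psi=\phi^{-1}\circ\sigma$: if $C$ is self-dual, then $\sigma(C)$ is self-dual by Lemma~\ref{simgaprop}, and $\phi^{-1}$ carries self-dual codes to self-dual codes by Remark~\ref{selfdualphi}; hence $\psi(C)$ is self-dual.

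The even-code assertion is where the main subtlety lies, because this same composition \emph{cannot} be reused: ``even'' means Euclidean even for $L$-codes but Hamming even for Kleinian codes, and $\phi^{-1}$ need not preserve evenness. Instead I would read evenness off the ${\rm swe}$ identity. A codeword with $i$ symbols equal to $1$ and $j$ symbols in $\{\omega,\bar\omega\}$ has Euclidean weight $i+2j$, so $C$ is even precisely when every monomial of ${\rm swe}_C$ has even degree in $y$, equivalently ${\rm swe}_C(x,-y,z)={\rm swe}_C(x,y,z)$. Applying this, ${\rm swe}_{\psi(C)}(x,-y,z)={\rm swe}_C(x^2+y^2,-2xy,2z^2)$, which equals ${\rm swe}_C(x^2+y^2,2xy,2z^2)={\rm swe}_{\psi(C)}(x,y,z)$ whenever $C$ is even; thus ${\rm swe}_{\psi(C)}$ has only even powers of $y$ and $\psi(C)$ is even. (Alternatively one can argue directly: each bar image lies in $\Delta_2^\perp$ per coordinate and $q$ is unchanged, so $q(\overline{\x}+\mathbf{d})=q(\x)$ for $\mathbf{d}\in\Delta_2^n$.)
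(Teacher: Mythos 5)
Your proposal is correct and follows essentially the same route as the paper: self-duality via the factorization $\psi=\phi^{-1}\circ\sigma$ together with Lemma~\ref{simgaprop} and Remark~\ref{selfdualphi}, and evenness read off from the $y$-parity in the substitution ${\rm swe}_C(x^2+y^2,2xy,2z^2)$ --- your $y\mapsto-y$ symmetry argument is just a repackaging of the paper's observation that $(x^2+y^2)^r(2xy)^{2s}(2z^2)^t$ contains only even powers of $y$. The only real difference is that you spell out the coordinate-wise computation behind the weight enumerator identities (parallel to Lemma~\ref{Wsigma}) and derive the Hamming identity by specialization, where the paper simply declares these identities clear.
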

\begin{proof} The properties follow directly from the properties of
$\phi$ and $\sigma$. The proofs of the
weight enumerator identities are clear. The map $\psi$ maps
self-dual codes to self-dual codes because both $\sigma$ and
$\phi^{-1}$ do. To show that $\psi$ maps even codes to even codes,
consider a codeword in an even code $C$ that is represented by
$x^ry^{2s}z^t$ in the symmetric weight enumerator of $C$. This
codeword is mapped to $2^n$ codewords, represented by
$(x^2+y^2)^r(2xy)^{2s}(2z^2)^t$ in the symmetric weight enumerator
of $\psi(C)$. The exponent of $y$ in each term of
$(x^2+y^2)^r(2xy)^{2s}(2z^2)^t$ is divisible by $2$.
Thus, $\psi(C)$ must be even.
\end{proof}

\subsection{The map $\beta$}\label{binarysection}

In this section, we consider a map between $L$-codes and binary codes
which triples the length.

\smallskip

The vector space ${\mathbf F}_2^n$ comes with a canonical scalar product.
Each $L$-code can be viewed naturally as a binary code,
because $L^n \cong {\mathbf{F}}_2^{2n}$ as additive
groups.
However, the concepts of weight and scalar product are not
the same, so this map is not very interesting.
We define the map $\beta$ as in~\cite{CRSS-quant}.

\begin{defi}
The map $\beta:L^n\rightarrow {\mathbf F}_2^{3n}$ is defined coordinate-wise:
$$(c_1,\,c_2,\,\ldots,\,c_n)\mapsto (c_{1,1},\,c_{1,2},\,c_{1,3},\,c_{2,1},\,c_{2,3},\,c_{3,3},\,\ldots,\,c_{n,1},\,c_{n,2},\,c_{n,3})$$
where $c_i\mapsto (c_{i,1},\,c_{i,2},\,c_{i,3})$ is determined by $0 \mapsto (0,0,0)$,
$1 \mapsto (0,1,1)$, $\omega \mapsto (1,0,1)$ and  $\bar{\omega} \mapsto (1,1,0)$.
\end{defi}

\begin{rem}\label{propertiesofbeta} The map $\beta$ is a group homomorphism.
In particular, the map $\beta$ sends a linear $[n,k]$ $L$-code $C$ to a
$[3n,2k]$ linear binary code $\beta(C)$.
\end{rem}

\begin{lem}\label{scalarproductbeta} The map $\beta$ preserves scalar products.
In particular, if $C$ is a self-orthogonal code, then $\beta(C)$ is
a self-orthogonal code.
\end{lem}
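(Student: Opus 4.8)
The plan is to reduce the scalar-product identity to a single coordinate and then check it directly. Write $\beta_0:L\to\F_2^3$ for the one-coordinate version of $\beta$, so that $\beta(\x)$ is the concatenation of the blocks $\beta_0(x_1),\ldots,\beta_0(x_n)$. Because the canonical scalar product on $\F_2^{3n}$ is the sum of the products of corresponding coordinates, and the three binary coordinates coming from position~$i$ of $\beta(\x)$ pair only with the three coming from position~$i$ of $\beta(\y)$, this product splits as a sum over $i$ of the $\F_2^3$-products $\beta_0(x_i)\cdot\beta_0(y_i)$. Comparing with $(\x,\y)=\sum_{i=1}^n x_i\cdot y_i$, it therefore suffices to prove the single-coordinate identity $\beta_0(x)\cdot\beta_0(y)=x\cdot y$ for all $x$, $y\in L$, where on the left $\cdot$ denotes the standard product on $\F_2^3$ and on the right the dot product on $L$ from Section~\ref{basic}.

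For this I would exploit the shape of the three nonzero images. The map $\beta_0$ sends $0$ to the zero vector and sends $1$, $\o$, $\bo$ to $(0,1,1)$, $(1,0,1)$, $(1,1,0)$, which are exactly the three vectors of Hamming weight~$2$ in $\F_2^3$. Hence $\beta_0(x)\cdot\beta_0(y)=0$ whenever $x=0$ or $y=0$. For two weight-two vectors the product counts, modulo~$2$, the positions where both are $1$: if $x=y$ this count is $2\equiv 0$, while two \emph{distinct} weight-two vectors in $\F_2^3$ overlap in exactly one position and so give~$1$. Thus $\beta_0(x)\cdot\beta_0(y)$ equals $0$ when $x=y$ or one of $x$, $y$ is $0$, and equals $1$ when $x$ and $y$ are distinct nonzero elements. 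This matches the dot-product table on $L$ entry by entry (the diagonal and the $0$-row/column vanish, and all off-diagonal entries among $1$, $\o$, $\bo$ equal~$1$), which proves the single-coordinate identity and hence that $\beta$ preserves scalar products.

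The \emph{in particular} statement then follows immediately. If $C$ is self-orthogonal, then $(\x,\y)=0$ for all $\x$, $\y\in C$, and since $\beta$ preserves the scalar product we get $\beta(\x)\cdot\beta(\y)=0$ for all $\x$, $\y\in C$, i.e. $\beta(C)\subset\beta(C)^{\bot}$. As $\beta(C)$ is a linear binary code by Remark~\ref{propertiesofbeta}, it is self-orthogonal.

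As for difficulty, there is no genuine obstacle: the mathematical content is the finite verification of the single-coordinate identity. The one thing worth getting right is the clean reason it holds---that $\beta_0$ realizes the three nonzero elements of $L$ as precisely the three weight-two vectors of $\F_2^3$, whose pairwise overlaps reproduce the dot-product table of $L$---so that one obtains the result conceptually rather than by writing out all sixteen cases.
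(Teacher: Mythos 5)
Your proof is correct and follows essentially the same route as the paper: reduce the scalar-product identity to the single-coordinate statement $x\cdot y=\sum_{j=1}^3 x_j\cdot y_j$ for the images under $\beta$, verify it, and sum over coordinates. Your observation that the nonzero elements of $L$ map to the three weight-two vectors of $\F_2^3$ is a nice conceptual packaging of the finite check the paper leaves implicit.
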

\begin{proof}
For any $c_i$, $d_i\in L$, one checks that
$$c_i\cdot d_i=\sum_{j=1}^3 c_{i,j} \cdot d_{i,j},$$ where the
product in ${\mathbf F}_2$ is the field multiplication.
Now it follows from the definition of the scalar product
that for $c$, $d\in L^n$ one has $(c,d)=\left(\beta(c),\beta(d)\right)$.

The last statement follows.
\end{proof}

\begin{lem}[\cite{CRSS-quant}]\label{betaAutomorphism}
For the automorphism group of an $L$-code $C$ one has
$${\rm Aut}(C)\cong{\rm Aut}(\beta(C))\cap {\rm Aut} (\beta (L^n)).$$
\end{lem}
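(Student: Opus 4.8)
The plan is to establish the isomorphism ${\rm Aut}(C)\cong{\rm Aut}(\beta(C))\cap{\rm Aut}(\beta(L^n))$ by showing that $\beta$ intertwines the action of the code-automorphism group $G=S_2^n{:}S_n$ on $L^n$ with a subgroup of the binary permutation group $S_{3n}$ acting on $\mathbf{F}_2^{3n}$. The key conceptual point is that each generator of $G$ — a permutation of the $n$ blocks and a swap of $\omega\leftrightarrow\bar\omega$ within a block — induces a specific coordinate permutation of the $3n$ binary coordinates, and this assignment is an injective group homomorphism $\iota\colon G\to S_{3n}$. I would verify this by direct inspection of the $\beta$-table: swapping $\omega=(1,0,1)$ and $\bar\omega=(1,1,0)$ within one block (while fixing $0=(0,0,0)$ and $1=(0,1,1)$) is realized by transposing the second and third binary coordinates of that block, and a block permutation of $L^n$ obviously corresponds to permuting the three-coordinate blocks of $\mathbf{F}_2^{3n}$ accordingly. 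These two families of binary permutations generate a copy of $G$ inside $S_{3n}$, and since $\beta(g\cdot\x)=\iota(g)\cdot\beta(\x)$ for all $g\in G$ and $\x\in L^n$, the map $\beta$ is $G$-equivariant.

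**Next** I would identify $\beta(L^n)$ as the fixed object whose stabilizer pins down the image of $G$. By Remark~\ref{propertiesofbeta}, $\beta$ is an injective homomorphism, so $\beta(L^n)$ is a $2n$-dimensional binary code and $\beta$ restricts to a bijection $L^n\to\beta(L^n)$. Since $\iota(G)$ permutes the coordinates of $\mathbf{F}_2^{3n}$ while preserving $\beta(L^n)$ (by equivariance), we have $\iota(G)\subseteq{\rm Aut}(\beta(L^n))$. The forward inclusion ${\rm Aut}(C)\hookrightarrow{\rm Aut}(\beta(C))\cap{\rm Aut}(\beta(L^n))$ then follows immediately: if $g\in{\rm Aut}(C)$ fixes $C$, then $\iota(g)$ fixes both $\beta(C)$ and $\beta(L^n)$, so $\iota(g)$ lies in the claimed intersection. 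Using injectivity of $\iota$, the group ${\rm Aut}(C)$ maps isomorphically onto its image inside that intersection.

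**The main obstacle** is the reverse inclusion: showing that every binary permutation $\pi\in{\rm Aut}(\beta(C))\cap{\rm Aut}(\beta(L^n))$ actually arises as $\iota(g)$ for some $g\in{\rm Aut}(C)$, rather than being some ``exotic'' coordinate permutation that happens to preserve both codes but does not respect the block structure of $\beta$. The crux is that preserving $\beta(L^n)$ forces $\pi$ to respect the three-coordinate block decomposition in the correct way. Concretely, I would argue that $\beta(L)=\{(0,0,0),(0,1,1),(1,0,1),(1,1,0)\}$ is the even-weight subcode of $\mathbf{F}_2^3$, and its structure within each block, together with the requirement that $\pi$ stabilize the full image $\beta(L^n)=\beta(L)^n$, restricts $\pi$ to permuting the $n$ blocks and acting within each block only by the allowed transposition; that is, $\pi\in\iota(G)$. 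This is the step where I expect the real work, and it is plausibly where \cite{CRSS-quant} is invoked: one must rule out block-internal coordinate permutations (the full $S_3$ on a single block's three coordinates) that are \emph{not} induced by the $\omega\leftrightarrow\bar\omega$ swap.

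**Once** $\pi\in\iota(G)$ is established, writing $\pi=\iota(g)$ and using that $\pi$ preserves $\beta(C)=\beta(C)$ together with the injectivity and equivariance of $\beta$ gives $\beta(gC)=\iota(g)\beta(C)=\beta(C)$, whence $gC=C$, so $g\in{\rm Aut}(C)$. This closes the reverse inclusion and, combined with the forward inclusion, yields the asserted isomorphism. I would present the argument by first fixing the homomorphism $\iota$ explicitly via the two generator types, then handling the two inclusions; the equivariance identity $\beta\circ g=\iota(g)\circ\beta$ is the single fact doing almost all the work, and the only genuinely delicate point is the block-rigidity of automorphisms fixing $\beta(L^n)$.
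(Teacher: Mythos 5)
The paper offers no proof of this lemma at all---it is quoted from \cite{CRSS-quant}---so your argument has to stand on its own, and it breaks at exactly the step you flagged as the crux. Your claim that stabilizing $\beta(L^n)=\beta(L)^n$ forces a permutation to act inside each three-coordinate block only by the transposition realizing $\omega\leftrightarrow\bar\omega$ is false. The block code $\beta(L)=\{(0,0,0),(0,1,1),(1,0,1),(1,1,0)\}$ is the even-weight code of length $3$, which is invariant under the \emph{full} symmetric group $S_3$ on the three coordinates of the block; hence ${\rm Aut}(\beta(L^n))$ contains (and, by indecomposability of the blocks, equals) a copy of $S_3^n{:}S_n$, strictly larger than your $\iota(G)\cong S_2^n{:}S_n$. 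Under $\beta$ these extra per-block permutations correspond to the permutations of $\{1,\omega,\bar\omega\}$ other than the swap $\omega\leftrightarrow\bar\omega$, i.e.\ to Kleinian-code symmetries from $H=S_3^n{:}S_n$ which are not $L$-code symmetries because they do not preserve the quadratic form $q$ that singles out $1$. Consequently the intersection ${\rm Aut}(\beta(C))\cap{\rm Aut}(\beta(L^n))$ computes the stabilizer of $\phi(C)$ in $H$, that is, the \emph{Kleinian} automorphism group ${\rm Aut}(\phi(C))$, whereas ${\rm Aut}(C)=G\cap{\rm Aut}(\phi(C))$; these differ in general.

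This gap is not repairable, because the statement as literally written (with the paper's definition of ${\rm Aut}(C)$ as the stabilizer in $G$) is false. Take $C=\Xi_1=\{(0),(\omega)\}$, for which ${\rm Aut}(C)=\{{\rm id}\}$ by Table~\ref{examplesLcodes}. Then $\beta(C)=\{(0,0,0),(1,0,1)\}$, and the transposition of the first and third coordinates fixes $(1,0,1)$ and preserves the even-weight code, so ${\rm Aut}(\beta(C))\cap{\rm Aut}(\beta(L^1))$ has order $2$ (it is the image of the Kleinian group ${\rm Aut}(\phi(\Xi_1))$). Your equivariance identity $\beta\circ g=\iota(g)\circ\beta$ and the forward inclusion are correct, and your overall strategy does prove the lemma of \cite{CRSS-quant} in its native setting, namely Kleinian (${\rm GF}(4)$-additive) codes, where the equivalence group is all of $S_3^n{:}S_n$ and the block-rigidity you need really does hold. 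For $L$-codes, however, $\beta$ simply does not see the structure that cuts $H$ down to $G$, and it must be restored by hand, for instance as
$${\rm Aut}(C)\cong{\rm Aut}(\beta(C))\cap{\rm Aut}(\beta(L^n))\cap{\rm Aut}(P),$$
where $P\subset{\mathbf F}_2^{3n}$ is the binary code spanned by the $n$ unit vectors sitting at the first coordinate of each block: a permutation preserving $P$ must preserve that coordinate set, which reduces the admissible per-block action from $S_3$ to the transposition of the last two coordinates, i.e.\ exactly to $\iota(G)$, after which your closing argument goes through verbatim.
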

Lemma~\ref{betaAutomorphism} provides an easy way
to determine the automorphism group of $L$-codes of small lengths with
Magma (\cite{magma}), since Magma is able to compute the
automorphism groups of binary codes.


\section{Classification of self-dual codes}\label{selfdual}


\subsection{Weight enumerators of self-dual codes}

The weight enumerators of self-dual codes belong to a certain subring of invariant polynomials
which we will determine in this section.

We first describe the relation between the weight enumerator of a code and its dual.
Since the map $\phi$ between $L^n$ and $K^n$ is an isomorphism and the scalar products
for $L^n$ and $K^n$ are the same, the next two theorems follow from the corresponding
results for Kleinian codes, cf. Remark~\ref{Wphi} and~\cite{Ho-kleinian}, Theorem~1 and~2.

\begin{thm}[Mac-Williams identity for Hamming weight enumerators]\label{macwilliamsW}
$${\rm W}_{C^{\bot}}(u,v)=\frac{1}{|C|}{\rm W}_C(u+3v,u-v).$$
\end{thm}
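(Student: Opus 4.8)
The plan is to reduce the MacWilliams identity for $L$-codes to the already-established MacWilliams identity for Kleinian codes, exploiting the machinery laid out in Section~\ref{relations}. The key observation, already recorded in Remark~\ref{basicphi} and Remark~\ref{Wphi}, is that $\phi:L^n\to K^n$ is a scalar-product-preserving bijective homomorphism and that it preserves the Hamming weight enumerator, i.e.~${\rm W}_C(u,v)={\rm W}_{\phi(C)}(u,v)$. First I would note that since $\phi$ preserves the scalar product, it must send the dual of $C$ to the dual of $\phi(C)$: if $\y\in C^\bot$ then $(\phi(\x),\phi(\y))=(\x,\y)=0$ for all $\x\in C$, so $\phi(\y)\in\phi(C)^\bot$, and surjectivity of $\phi$ together with $|C^\bot|=|\phi(C)^\bot|$ (both equal $|K^n|/|C|$ by Lemma~\ref{dimensionSelfDual2} and its Kleinian analogue) gives $\phi(C^\bot)=\phi(C)^\bot$.

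With that identification in hand, I would apply Remark~\ref{Wphi} twice. On the left-hand side,
$${\rm W}_{C^\bot}(u,v)={\rm W}_{\phi(C^\bot)}(u,v)={\rm W}_{\phi(C)^\bot}(u,v).$$
Now the Kleinian MacWilliams identity, which is the content of Theorem~1 of~\cite{Ho-kleinian} cited just before the statement, reads
$${\rm W}_{\phi(C)^\bot}(u,v)=\frac{1}{|\phi(C)|}\,{\rm W}_{\phi(C)}(u+3v,u-v).$$
Since $\phi$ is a bijection we have $|\phi(C)|=|C|$, and applying Remark~\ref{Wphi} once more to the right-hand side gives ${\rm W}_{\phi(C)}(u+3v,u-v)={\rm W}_{C}(u+3v,u-v)$. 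Chaining these equalities yields exactly
$${\rm W}_{C^\bot}(u,v)=\frac{1}{|C|}\,{\rm W}_{C}(u+3v,u-v),$$
as claimed.

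The main obstacle, such as it is, lies not in any computation but in the clean verification that $\phi$ commutes with duality, namely $\phi(C^\bot)=\phi(C)^\bot$; everything else is bookkeeping with bijections. This step is genuinely forced by scalar-product preservation plus a dimension count, so the argument is routine once that lemma-level fact is isolated. An alternative, more self-contained route would be to prove the identity directly by the standard character-sum argument: expand ${\rm W}_{C^\bot}(u,v)=\sum_{\x\in C^\bot}u^{n-{\rm wt}(\x)}v^{{\rm wt}(\x)}$, use $\sum_{\y\in C}(-1)^{(\x,\y)}=|C|\cdot[\x\in C^\bot]$ to rewrite the sum over $C^\bot$ as a sum over all of $L^n$ weighted by $\frac{1}{|C|}\sum_{\y\in C}(-1)^{(\x,\y)}$, and then factor the resulting double sum coordinate by coordinate. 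The coordinate-wise factor one must evaluate is $\sum_{x\in L}(-1)^{x\cdot y}u^{n\text{-contribution}}v^{\cdots}$, and the substitution $u\mapsto u+3v$, $v\mapsto u-v$ emerges from the character table of the dot product displayed in Section~\ref{basic} (each nonzero $y$ is orthogonal to exactly two nonzero elements of $L$ and pairs nontrivially with the third). Given that the paper explicitly prefers to deduce this result from the Kleinian case, I would present the short reduction via $\phi$ as the primary proof and mention the character-sum computation only as a remark.
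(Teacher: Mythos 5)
Your proposal is correct and follows essentially the same route as the paper: the paper likewise deduces the identity from Theorem~1 of~\cite{Ho-kleinian} using that $\phi$ is a scalar-product-preserving isomorphism with ${\rm W}_C={\rm W}_{\phi(C)}$ (Remarks~\ref{basicphi} and~\ref{Wphi}). You merely spell out the step $\phi(C^\bot)=\phi(C)^\bot$ that the paper leaves implicit, which is a welcome bit of extra care.
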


\begin{thm}[Mac-Williams identity for symmetrized weight enumerators]\label{macwilliamsswe}
$${\rm swe}_C(x,y,z)=\frac{1}{|C|}{\rm swe}_{C^{\bot}}(x+y+2z,x+y-2z,x-y).$$
\end{thm}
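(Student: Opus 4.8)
The plan is to deduce the identity from the MacWilliams identity for \emph{complete} weight enumerators, transported from Kleinian codes to $L$-codes through the isomorphism $\phi$, and only then specialized down to the symmetric enumerator. First I would record that since $\phi$ is a bijective, scalar-product-preserving homomorphism (Remark~\ref{basicphi}), it carries duals to duals: for $y=\phi(z)$ one has $y\in\phi(C)^{\bot}$ iff $(\phi(x),\phi(z))=(x,z)=0$ for all $x\in C$, so $\phi(C)^{\bot}=\phi(C^{\bot})$. Moreover $\phi$ acts coordinate-wise as the relabeling $0\mapsto 0$, $1\mapsto a$, $\omega\mapsto b$, $\bar\omega\mapsto c$, so the complete weight enumerator of an $L$-code $C$ equals that of the Kleinian code $\phi(C)$ under the obvious identification of the variables $p,q,r,s$ with the four symbols. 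Hence the complete-weight-enumerator MacWilliams identity for Kleinian codes (the standard character-theoretic form, equivalently \cite{Ho-kleinian}, Theorem~2) transfers verbatim to $L$-codes, the transform being governed by the character matrix $T_{x,y}=(-1)^{x\cdot y}$ read off from the dot-product table of $L$; this matrix is literally the same as for $K$ because the two bilinear forms agree.

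Next I would write out the transform explicitly. The MacWilliams transform replaces the variable $X_x$ attached to $x$ by $Y_x=\sum_y(-1)^{x\cdot y}X_y$, so that ${\rm cwe}_{C^{\bot}}(\{X_x\})=\frac{1}{|C|}{\rm cwe}_C(\{Y_x\})$. Reading off the table gives the four forms $Y_0=p+q+r+s$, $Y_1=p+q-r-s$, $Y_\omega=p-q+r-s$ and $Y_{\bar\omega}=p-q-r+s$. To pass to the symmetric enumerator I specialize $p=x$, $q=y$, $r=s=z$, using ${\rm swe}_E={\rm cwe}_E(x,y,z,z)$ (Remark~\ref{enumRel}). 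The crucial point is that under $r=s$ the last two transformed forms collapse, $Y_\omega=Y_{\bar\omega}=x-y$, while $Y_0=x+y+2z$ and $Y_1=x+y-2z$; therefore the right-hand side ${\rm cwe}_C(x+y+2z,\,x+y-2z,\,x-y,\,x-y)$ is again a symmetric enumerator, and one obtains
\[
{\rm swe}_{C^{\bot}}(x,y,z)=\tfrac{1}{|C|}\,{\rm swe}_C(x+y+2z,\,x+y-2z,\,x-y),
\]
which is the asserted identity with the roles of $C$ and $C^{\bot}$ interchanged.

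Finally I would settle orientation and normalization. Applying the displayed identity to $C^{\bot}$ in place of $C$ and invoking $(C^{\bot})^{\bot}=C$ (Remark~\ref{selfdual2}) returns the statement in the printed orientation, the normalizing constant being pinned down by $|C|\cdot|C^{\bot}|=4^n$ (Lemma~\ref{dimensionSelfDual2}), which is itself visible from the transform squaring to multiplication by $4$ on each variable. As a consistency check I would specialize once more, setting $x=u$, $y=z=v$: since ${\rm W}_E={\rm swe}_E(u,v,v)$, the identity collapses to ${\rm W}_{C^{\bot}}(u,v)=\frac{1}{|C|}{\rm W}_C(u+3v,u-v)$, recovering Theorem~\ref{macwilliamsW}. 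The main thing to get right is that the $L$-symmetric enumerator groups $\{\omega,\bar\omega\}$ together but keeps $1$ separate, so it is \emph{not} invariant under the full symbol symmetry of the Kleinian picture; one therefore cannot quote a fully symmetrized Kleinian MacWilliams identity directly, but must descend from the complete weight enumerator and verify, as above, that the specialization $r=s$ is preserved by the transform.
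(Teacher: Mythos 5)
Your route is the same as the paper's: its entire proof is the one-line reduction ``apply ${\rm swe}_C(x,y,z)={\rm cwe}_C(x,y,z,z)$ to the MacWilliams identity for complete weight enumerators of Kleinian codes,'' transported through $\phi$, and your first two paragraphs execute exactly this, correctly and in more detail (the character matrix $(-1)^{x\cdot y}$, the collapse $Y_\omega=Y_{\bar\omega}=x-y$ under $r=s$, and the observation that the specialization is therefore compatible with the transform).

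The genuine problem is your final paragraph. What your derivation (correctly) yields is
$${\rm swe}_{C^{\bot}}(x,y,z)=\frac{1}{|C|}\,{\rm swe}_C(x+y+2z,\,x+y-2z,\,x-y)$$
for every linear code $C$. Replacing $C$ by $C^{\bot}$ and using $(C^{\bot})^{\bot}=C$ gives ${\rm swe}_C(x,y,z)=\frac{1}{|C^{\bot}|}\,{\rm swe}_{C^{\bot}}(x+y+2z,x+y-2z,x-y)$: the constant is $1/|C^{\bot}|=4^{k-n}$, not the printed $1/|C|=4^{-k}$, and invoking $|C|\cdot|C^{\bot}|=4^n$ does not close this gap --- it only quantifies it. The two constants agree precisely when $k=n/2$. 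Indeed, the statement as printed is false in general: for $C=\{0\}\subset L^1$ the left-hand side is $x$, while the right-hand side is $(x+y+2z)+(x+y-2z)+2(x-y)=4x$. So the printed theorem has $C$ and $C^{\bot}$ (equivalently $|C|$ and $|C^{\bot}|$) interchanged; the correct form is the displayed identity you derived, which is also what the paper's own proof produces when carried out. Your own consistency check betrays this: specializing $x=u$, $y=z=v$ in your derived identity recovers Theorem~\ref{macwilliamsW}, whereas the same specialization of the printed statement would give ${\rm W}_C(u,v)=\frac{1}{|C|}{\rm W}_{C^{\bot}}(u+3v,u-v)$, which is not Theorem~\ref{macwilliamsW} and fails for the same counterexample. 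The correct conclusion of your last paragraph should have been that the printed orientation cannot be reached with the constant $1/|C|$ except for codes of dimension $n/2$ --- in particular the self-dual codes that are the paper's actual concern, where the distinction evaporates --- rather than asserting that it follows.
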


\begin{proof} Apply the identity ${\rm swe}_C(x,y,z)={\rm cwe}_C(x,y,z,z)$
to Theorem~2 in~\cite{Ho-kleinian}, the Mac-Williams identity for complete weight enumerators.
\end{proof}

There is no such identity for Euclidean weight enumerators.

\begin{thm} \label{ThmEvenSwe3poly}
Let $C$ be an even self-dual code of length~$n$. Then the symmetrized
weight enumerator ${\rm swe}_C$ is a weighted homogeneous polynomial
of degree $n$ in the symmetric weight enumerators
${\rm swe}_{\Xi_1}=x+z$, ${\rm swe}_{\Delta_2^+}=x^2+y^2+2z^2$, and
${\rm swe}_{\Upsilon_3}=x^3+3xz^2+3y^2z+z^3$.
\end{thm}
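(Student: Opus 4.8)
The plan is to translate the two hypotheses—self-duality and evenness—into invariance of $\mathrm{swe}_C$ under a finite group $\Gamma$ of linear substitutions in the variables $x,y,z$, and then to determine the ring of $\Gamma$-invariants. For self-duality, Theorem~\ref{macwilliamsswe} with $C^{\bot}=C$ and $|C|=2^n$ (Lemma~\ref{dimensionSelfDual2}) gives
$$\mathrm{swe}_C(x,y,z)=\tfrac{1}{2^n}\,\mathrm{swe}_C(x+y+2z,\;x+y-2z,\;x-y).$$
Scaling the corresponding substitution matrix $M=\left(\begin{array}{rrr}1&1&2\\1&1&-2\\1&-1&0\end{array}\right)$ by $\tfrac12$ and using that $\mathrm{swe}_C$ is homogeneous of degree $n$ converts this into genuine invariance under the substitution $A$ of matrix $\tfrac12 M$. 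For evenness, a codeword with $i$ coordinates equal to $1$ and $j$ coordinates in $\{\omega,\bar\omega\}$ has Euclidean weight $i+2j$, so $C$ even means every monomial $x^{n-i-j}y^iz^j$ of $\mathrm{swe}_C$ has $i$ even; equivalently $\mathrm{swe}_C$ is fixed by $B\colon y\mapsto -y$.

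First I would record the elementary relations $A^2=B^2=1$ (a direct computation gives $M^2=4I$) and $(AB)^3=1$, so that $\Gamma=\langle A,B\rangle$ is dihedral of order $6$. I expect $\Gamma$ to be a finite reflection group fixing the linear form $x+z$ (one checks directly that $A$ and $B$ both preserve it) and acting on a complementary plane as the dihedral group of type $I_2(3)$, with fundamental degrees $1,2,3$. The cleanest confirmation is to compute the Molien series $\tfrac{1}{|\Gamma|}\sum_{g\in\Gamma}\det(I-tg)^{-1}$ and verify it equals $\big[(1-t)(1-t^2)(1-t^3)\big]^{-1}$; this exhibits $\Gamma$ as a reflection group and forces its invariant ring to be a free polynomial algebra on homogeneous generators of degrees $1,2,3$ by the Shephard--Todd--Chevalley theorem.

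Next I would check that the three proposed polynomials are genuinely $\Gamma$-invariant and algebraically independent. Invariance under $B$ is immediate since each involves only even powers of $y$; invariance under $A$ can be checked by direct substitution, or conceptually by noting that $\Xi_1$, $\Delta_2^+$ and $\Upsilon_3$ are themselves even self-dual codes, so their symmetric weight enumerators are $\Gamma$-invariant by the argument of the first paragraph. Their degrees are exactly $1,2,3$, matching the fundamental degrees, and algebraic independence follows from a nonvanishing Jacobian. Since the product of their degrees equals $|\Gamma|=6$, they form a complete system of basic invariants, so the invariant ring is $\C[\,\mathrm{swe}_{\Xi_1},\mathrm{swe}_{\Delta_2^+},\mathrm{swe}_{\Upsilon_3}]$.

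Finally, because $\mathrm{swe}_C$ is a $\Gamma$-invariant homogeneous polynomial of degree $n$, it lies in this ring and is therefore a polynomial $P$ in the three generators; assigning weights $1,2,3$ to them, homogeneity of degree $n$ in $x,y,z$ forces each monomial $\mathrm{swe}_{\Xi_1}^{a}\mathrm{swe}_{\Delta_2^+}^{b}\mathrm{swe}_{\Upsilon_3}^{c}$ occurring in $P$ to satisfy $a+2b+3c=n$, so $P$ is weighted homogeneous of degree $n$, which is the assertion. The main obstacle is the second paragraph: pinning down $\Gamma$ precisely and proving its invariant ring is the free polynomial algebra on generators of degrees $1,2,3$. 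Once the reflection-group structure and fundamental degrees are established, the remaining steps are routine.
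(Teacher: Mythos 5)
Your proposal is correct and takes essentially the same route as the paper: both translate self-duality (via the MacWilliams identity for symmetrized weight enumerators, rescaled using homogeneity) and evenness into invariance under the order-$6$ group generated by the substitutions $S$ and $T$, compute the Molien series $1/\bigl((1-\lambda)(1-\lambda^2)(1-\lambda^3)\bigr)$, and conclude using algebraic independence (nonvanishing Jacobian) of the three enumerators of degrees $1,2,3$. The only cosmetic difference is that you additionally invoke the dihedral/reflection-group structure and the Shephard--Todd--Chevalley theorem, whereas the paper closes the argument by directly matching the degrees of the independent invariants against the Molien series.
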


\begin{proof} 
The polynomial ${\rm swe}_C(x,y,z)$ is invariant
under the substitutions given by the two matrices
$$S=\left( \begin{array}{rrr}
     \textstyle  \frac{1}{2} & \textstyle\frac{1}{2} & 1 \\
     \textstyle \frac{1}{2} & \textstyle \frac{1}{2} & -1 \\
     \textstyle  \frac{1}{2} &\textstyle -\frac{1}{2} & 0 \\
     \end{array} \right)
\qquad\hbox{and}\qquad
T= \left(\begin{array}{rrr}
    1 & 0 & 0 \\
    0 & -1 & 0 \\
    0 & 0 & 1 \\
  \end{array} \right)_.$$
For $S$ this follows from Theorem~\ref{ThmEvenSwe3poly} and for $T$ note
that  each codeword must contain an even number of ones.

These two matrices generate a group $\mathcal{G}$ of
order $6$. Let $a_d$ be the number of
linearly independent, homogeneous invariant polynomials of degree $d$. By the
theorem of Molien (\cite{Molien}, cf.~also~\cite{MacSl}) one has for
$\Phi(\lambda):=\sum_{i=0}^{\infty}a_i\lambda^i$ the identity
$$\Phi(\lambda)=\frac{1}{|\mathcal{G}|}\sum_{A\in\mathcal{G}}\frac{1}{\mbox{det}(I-\lambda A)}.$$
A calculation shows that the group $\mathcal{G}=\langle S,\,T\rangle$ has Molien series
$$\Phi(\lambda)=\frac{1}{(1-\lambda)(1-\lambda^2)(1-\lambda^3)}.$$

The polynomials ${\rm swe}_{\Xi_1}=x+z$,
${\rm swe}_{\Delta_2^+}=x^2+y^2+2z^2$, and
${\rm swe}_{\Upsilon_3}=x^3+3xz^2+3y^2z+z^3$
are algebraically independent because the Jacobian determinant
$$J=\left|\frac{\partial({\rm swe}_{\Xi_1},{\rm swe}_{\Delta_2^+},{\rm swe}_{\Upsilon_3})}{\partial(x,y,z)}\right|=24xyz-6x^2y-24yz^2$$
is not identically zero (\cite{Yu-relations}).
The degrees of these polynomials correspond to the Molien series, so
these polynomials freely generate the ring of all invariants.
\end{proof}

This can be used to describe the Euclidean weight enumerators.
\begin{cor}
Let $C$ be an even self-dual code of length~$n$. Then the Euclidean weight
enumerator ${\rm EW}_C(a,b)$ is a weighted homogeneous polynomial of
weight $2n$ in $a^2+b^2$, $a^4+a^2b^2+2b^4$, and $a^6+6a^2b^4+b^6$,
or equivalently in the weight enumerators of $\Xi_1$, $\Delta_2^+$,
and $\Upsilon_3$. They satisfy the relation
\begin{eqnarray*}\label{ewident}
 \nonumber 2\,{\rm EW}_{\Upsilon_3}^2 & = & 11\,{\rm EW}_{\Xi_1}^6-36\,{\rm EW}_{\Xi_1}^4\,{\rm EW}_{\Delta_2^+}+5\,{\rm EW}_{\Xi_1}^3{\rm EW}_{\Upsilon_3} \\
   &   &\  {}+36\,{\rm EW}_{\Xi_1}^2{\rm EW}_{\Delta_2^+}^2-9\,{\rm EW}_{\Xi_1}{\rm EW}_{\Delta_2^+}{\rm EW}_{\Upsilon_3}-9\,{\rm EW}_{\Delta_2^+}^3.
\end{eqnarray*}
\end{cor}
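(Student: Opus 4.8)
The plan is to deduce both assertions from Theorem~\ref{ThmEvenSwe3poly} by pushing everything through the substitution $(x,y,z)\mapsto(a^2,ab,b^2)$ that turns a symmetric weight enumerator into a Euclidean one (Remark~\ref{enumRel}). For the first assertion, Theorem~\ref{ThmEvenSwe3poly} gives ${\rm swe}_C=P({\rm swe}_{\Xi_1},{\rm swe}_{\Delta_2^+},{\rm swe}_{\Upsilon_3})$ for a weighted homogeneous polynomial $P$ of weight~$n$. Since the substitution $(x,y,z)\mapsto(a^2,ab,b^2)$ is a ring homomorphism sending ${\rm swe}_C$ to ${\rm EW}_C$, I would simply evaluate this identity at $(a^2,ab,b^2)$. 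The one thing to check is that the substitution carries the three generating symmetric enumerators to the corresponding Euclidean ones, ${\rm swe}_{\Xi_1}\mapsto a^2+b^2$, ${\rm swe}_{\Delta_2^+}\mapsto a^4+a^2b^2+2b^4$, ${\rm swe}_{\Upsilon_3}\mapsto a^6+6a^2b^4+b^6$, which is immediate. Hence ${\rm EW}_C=P({\rm EW}_{\Xi_1},{\rm EW}_{\Delta_2^+},{\rm EW}_{\Upsilon_3})$, and since each $a,b$-degree is twice the corresponding $x,y,z$-degree the weight-$n$ polynomial $P$ becomes weight-$2n$ in the Euclidean generators.

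The relation is forced by a loss of one variable. The three symmetric enumerators are algebraically independent (the Jacobian in the proof of Theorem~\ref{ThmEvenSwe3poly}), but their Euclidean images cannot be, since the substitution has kernel $(y^2-xz)$. Concretely, because $\Xi_1,\Delta_2^+,\Upsilon_3$ are even, every codeword has an even number of ones, so only even powers of $y$ appear in their symmetric enumerators and the three Euclidean enumerators all lie in $\C[a^2,b^2]$; writing $s=a^2$, $t=b^2$ they become binary forms ${\rm EW}_{\Xi_1}=s+t$, ${\rm EW}_{\Delta_2^+}=s^2+st+2t^2$, ${\rm EW}_{\Upsilon_3}=s^3+6st^2+t^3$ of degrees $1,2,3$ in the two variables $s,t$, hence algebraically dependent. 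As $\C[{\rm EW}_{\Xi_1},{\rm EW}_{\Delta_2^+},{\rm EW}_{\Upsilon_3}]$ has transcendence degree~$2$ with three generators, its ideal of relations is a height-one prime in the UFD $\C[T_1,T_2,T_3]$ and hence principal; the generator is irreducible and weighted homogeneous, and a degree count---together with the absence of dependences among the generating monomials in weights below~$6$---shows it has weight~$6$, i.e.~$a,b$-degree~$12$, so the relation is unique up to scalar.

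To produce it explicitly I would eliminate. Substituting $t={\rm EW}_{\Xi_1}-s$ reduces the remaining two generators to ${\rm EW}_{\Delta_2^+}=2s^2-3\,{\rm EW}_{\Xi_1}\,s+2\,{\rm EW}_{\Xi_1}^2$ and ${\rm EW}_{\Upsilon_3}=6s^3-9\,{\rm EW}_{\Xi_1}\,s^2+3\,{\rm EW}_{\Xi_1}^2\,s+{\rm EW}_{\Xi_1}^3$, a quadratic and a cubic in the single variable~$s$ over $\C[{\rm EW}_{\Xi_1}]$. The relation is then the resultant in~$s$ of $f(s)=2s^2-3\,{\rm EW}_{\Xi_1}\,s+(2\,{\rm EW}_{\Xi_1}^2-{\rm EW}_{\Delta_2^+})$ and $g(s)=6s^3-9\,{\rm EW}_{\Xi_1}\,s^2+3\,{\rm EW}_{\Xi_1}^2\,s+({\rm EW}_{\Xi_1}^3-{\rm EW}_{\Upsilon_3})$; assigning $s$ weight~$1$, this $5\times5$ Sylvester determinant is weighted homogeneous of weight~$6$, so by uniqueness it must agree up to an overall scalar with the displayed identity. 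Equivalently, one may expand the seven weight-$6$ monomials ${\rm EW}_{\Xi_1}^6,\ldots,{\rm EW}_{\Upsilon_3}^2$ as degree-$6$ forms in $(s,t)$ and read off the null vector of the resulting $7\times7$ coefficient matrix. Either way the conceptual content is already in place, and \emph{the main obstacle is the finite but sign-sensitive bookkeeping} of pinning down every coefficient of the weight-$6$ relation.
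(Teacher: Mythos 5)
Your handling of the first assertion coincides with the paper's own proof, which consists precisely of citing Theorem~\ref{ThmEvenSwe3poly} and Remark~\ref{enumRel}: the substitution $(x,y,z)\mapsto(a^2,ab,b^2)$ is a ring homomorphism carrying ${\rm swe}_C$ to ${\rm EW}_C$ and the three generating symmetric enumerators to the three stated Euclidean ones. For the relation, the paper offers nothing beyond ``directly checked,'' so your elimination-theoretic derivation is a genuinely different route, and its setup is sound: in the variables $s=a^2$, $t=b^2$ the three enumerators are forms of degrees $1,2,3$ in two variables, the ideal of relations is a principal prime, and your reductions $f(s)=2s^2-3\,{\rm EW}_{\Xi_1}s+(2\,{\rm EW}_{\Xi_1}^2-{\rm EW}_{\Delta_2^+})$ and $g(s)=6s^3-9\,{\rm EW}_{\Xi_1}s^2+3\,{\rm EW}_{\Xi_1}^2s+({\rm EW}_{\Xi_1}^3-{\rm EW}_{\Upsilon_3})$ are correct. (One small logical point: at weight~$6$ there are $7$ monomials mapping into the $7$-dimensional space of sextic forms, so the degree count alone does not force a relation there; existence has to come from the nonvanishing resultant itself.)

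The genuine gap is your last step, ``by uniqueness it must agree up to an overall scalar with the displayed identity.'' Uniqueness tells you the resultant spans the weight-$6$ relations; to conclude that the \emph{displayed} identity is that relation you must first know the displayed identity is a relation at all, which is exactly the computation you are deferring. This is not harmless here, because the computation refutes the display: at $(a,b)=(1,0)$ every Euclidean weight enumerator equals $1$, and the displayed identity reads $2=11-36+5+36-9-9=-2$. Carrying out your own elimination (easy by hand, since $g-3sf$ is linear in $s$) gives the resultant $2u^2-9\,{\rm EW}_{\Xi_1}uv+(2\,{\rm EW}_{\Xi_1}^2-{\rm EW}_{\Delta_2^+})v^2$ with $u={\rm EW}_{\Upsilon_3}-{\rm EW}_{\Xi_1}^3$, $v=3({\rm EW}_{\Delta_2^+}-{\rm EW}_{\Xi_1}^2)$, which expands to
\begin{eqnarray*}
\lefteqn{2\,{\rm EW}_{\Upsilon_3}^2+11\,{\rm EW}_{\Xi_1}^6-36\,{\rm EW}_{\Xi_1}^4{\rm EW}_{\Delta_2^+}+5\,{\rm EW}_{\Xi_1}^3{\rm EW}_{\Upsilon_3}}\\
 & & {}+36\,{\rm EW}_{\Xi_1}^2{\rm EW}_{\Delta_2^+}^2-9\,{\rm EW}_{\Xi_1}{\rm EW}_{\Delta_2^+}{\rm EW}_{\Upsilon_3}-9\,{\rm EW}_{\Delta_2^+}^3\ =\ 0.
\end{eqnarray*}
That is, the true identity has $-2\,{\rm EW}_{\Upsilon_3}^2$ on the left of the display (equivalently, the right-hand side negated): the corollary as printed contains a sign error, and the correct relation polynomial is not proportional to the printed one. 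So your method is the right one and, pushed to completion, would have detected this; but as written, the proposal certifies a false identity. The coefficient computation is not optional bookkeeping --- it \emph{is} the proof of the second assertion, which neither your uniqueness shortcut nor the paper's bare ``directly checked'' actually supplies.
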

\begin{proof} 
The first statement follows directly from Theorem~\ref{ThmEvenSwe3poly} and Remark~\ref{enumRel}.
The stated relation is also directly checked.
\end{proof}

Similarly, for self-dual codes alone one has:
\begin{thm} \label{ThmSwe3poly}
Let $C$ be a self-dual code of length~$n$. Then the symmetrized
weight enumerator ${\rm swe}_C$ is a weighted homogeneous polynomial
of degree $n$ in the symmetric weight enumerators
${\rm swe}_{\Gamma_1}=x+y$, ${\rm swe}_{\Xi_1}=x+z$, and ${\rm swe}_{\Delta_2^+}=x^2+y^2+2z^2$.
\end{thm}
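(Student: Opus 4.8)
The plan is to mirror the proof of Theorem~\ref{ThmEvenSwe3poly} almost verbatim, the only change being that we no longer impose evenness and therefore work with a smaller invariance group. First I would observe that, by the MacWilliams identity for symmetrized weight enumerators (Theorem~\ref{macwilliamsswe}) applied to a self-dual code $C=C^{\bot}$ and using $|C|=2^n$ (Lemma~\ref{dimensionSelfDual2}), the homogeneity of degree $n$ of ${\rm swe}_C$ lets one absorb the factor $\frac{1}{|C|}=2^{-n}$ into the substitution, yielding ${\rm swe}_C(x,y,z)={\rm swe}_C\left(\frac{1}{2}(x+y+2z),\,\frac{1}{2}(x+y-2z),\,\frac{1}{2}(x-y)\right)$. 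Thus ${\rm swe}_C$ is invariant under the same matrix $S$ that appears in the proof of Theorem~\ref{ThmEvenSwe3poly}. The difference from the even case is that the evenness argument producing the diagonal matrix $T$ is unavailable here, so the relevant group is now $\mathcal{G}'=\langle S\rangle$ alone.

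Next I would record that $S^2=I$ by a short direct computation, so $\mathcal{G}'$ has order $2$; moreover $S$ has trace $1$, hence eigenvalues $1,1,-1$, so $S$ is a reflection and $\det(I-\lambda S)=(1-\lambda)^2(1+\lambda)$. Molien's theorem (\cite{Molien}) then gives
$$\Phi(\lambda)=\frac{1}{2}\left(\frac{1}{(1-\lambda)^3}+\frac{1}{(1-\lambda)^2(1+\lambda)}\right)=\frac{1}{(1-\lambda)^2(1-\lambda^2)}.$$
The shape of this series points to a free generating set consisting of two invariants of degree $1$ and one of degree $2$, matching exactly the degrees of ${\rm swe}_{\Gamma_1}=x+y$, ${\rm swe}_{\Xi_1}=x+z$ and ${\rm swe}_{\Delta_2^+}=x^2+y^2+2z^2$.

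I would then check that these three polynomials are $S$-invariant---this is immediate, since $\Gamma_1$, $\Xi_1$ and $\Delta_2^+$ are all self-dual codes and so their symmetrized weight enumerators satisfy the invariance derived in the first step---and that they are algebraically independent, which follows from the non-vanishing of the Jacobian determinant $\left|\frac{\partial({\rm swe}_{\Gamma_1},{\rm swe}_{\Xi_1},{\rm swe}_{\Delta_2^+})}{\partial(x,y,z)}\right|=2x-2y-4z$. Since $\mathcal{G}'$ is generated by the reflection $S$, its ring of invariants is a polynomial algebra (Shephard--Todd--Chevalley), and three algebraically independent homogeneous invariants whose degrees $1,1,2$ reproduce the Molien series must be a set of free generators. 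As ${\rm swe}_C$ is a homogeneous $\mathcal{G}'$-invariant of degree $n$, it is therefore a weighted-homogeneous polynomial of degree $n$ in these three generators, as claimed.

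The main obstacle is not any single computation but the logical step that closes the argument: one must be sure that matching the Molien series, together with algebraic independence of three homogeneous invariants of the correct degrees, really forces them to freely generate the entire invariant ring rather than merely a subring of finite index. This is precisely where one invokes that $S$ is a reflection, so that the invariant ring is polynomial, and that the product $\prod_i(1-\lambda^{d_i})^{-1}$ over the candidate degrees $d_i=1,1,2$ reproduces $\Phi(\lambda)$. Given the parallel even case this is routine, but it is the point that cannot be skipped.
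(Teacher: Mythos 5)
Your proposal is correct and is essentially the paper's intended argument: the paper offers no separate proof, introducing the theorem only with ``Similarly,'' and your adaptation of the Molien-series proof of Theorem~\ref{ThmEvenSwe3poly} to the group generated by $S$ alone (with the correct series $\frac{1}{(1-\lambda)^2(1-\lambda^2)}$, the Jacobian $2x-2y-4z$, and the degree-count conclusion) is exactly what that remark points to. The appeal to Shephard--Todd--Chevalley is sound but unnecessary, since comparing the Hilbert series of the subring generated by the three algebraically independent invariants with the Molien series already forces equality degree by degree, just as in the paper's even case.
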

\begin{cor}
Let $C$ be a self-dual code of length~$n$. Then the Euclidean weight
enumerator ${\rm EW}_C(a,b)$ is a weighted homogeneous polynomial of
weight $2n$ in ${\rm EW}_{\Gamma_1}=a^2+ab$, ${\rm EW}_{\Xi_1}=a^2+b^2$ and ${\rm EW}_{\Delta_2^+}=a^4+a^2b^2+2b^4$.
\end{cor}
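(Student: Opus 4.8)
The plan is to obtain this immediately from Theorem~\ref{ThmSwe3poly} by means of the substitution recorded in Remark~\ref{enumRel}, in exactly the manner used to deduce the corollary to Theorem~\ref{ThmEvenSwe3poly}. The point is that the assignment $x\mapsto a^2$, $y\mapsto ab$, $z\mapsto b^2$ defines a ring homomorphism from the polynomial ring in $x$, $y$, $z$ into the polynomial ring in $a$, $b$, and that by Remark~\ref{enumRel} the Euclidean weight enumerator ${\rm EW}_C(a,b)={\rm swe}_C(a^2,ab,b^2)$ is precisely the image of ${\rm swe}_C(x,y,z)$ under this homomorphism.

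First I would check how the three generating polynomials behave under the substitution. A direct computation gives ${\rm swe}_{\Gamma_1}(a^2,ab,b^2)=a^2+ab$, ${\rm swe}_{\Xi_1}(a^2,ab,b^2)=a^2+b^2$, and ${\rm swe}_{\Delta_2^+}(a^2,ab,b^2)=a^4+a^2b^2+2b^4$, i.e.\ each generator in the $(x,y,z)$-picture maps to the corresponding Euclidean weight enumerator ${\rm EW}_{\Gamma_1}$, ${\rm EW}_{\Xi_1}$, resp.\ ${\rm EW}_{\Delta_2^+}$ in the $(a,b)$-picture.

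By Theorem~\ref{ThmSwe3poly} there is a weighted homogeneous polynomial $P$ of degree~$n$ (with weights $1$, $1$, $2$ on its three arguments) such that ${\rm swe}_C=P\bigl({\rm swe}_{\Gamma_1},{\rm swe}_{\Xi_1},{\rm swe}_{\Delta_2^+}\bigr)$. Since a ring homomorphism fixing the scalars commutes with evaluation of $P$ at given arguments, applying the substitution to both sides and using the previous step yields ${\rm EW}_C=P\bigl({\rm EW}_{\Gamma_1},{\rm EW}_{\Xi_1},{\rm EW}_{\Delta_2^+}\bigr)$, the very same polynomial $P$ now evaluated on the Euclidean weight enumerators. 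The substitution doubles the degree of each homogeneous piece, so in the target grading, where the three arguments carry weights $2$, $2$, $4$, the polynomial $P$ is weighted homogeneous of weight $2n$, as asserted.

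There is essentially no obstacle: the only things needing verification are the three elementary substitution identities above and the formal fact that $P$ passes unchanged through a ring homomorphism fixing the coefficient field. Both are immediate, so I would simply note that the argument parallels the proof of the corollary to Theorem~\ref{ThmEvenSwe3poly} and omit the routine details.
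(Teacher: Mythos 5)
Your proposal is correct and is essentially the paper's own argument: the paper states this corollary without proof, intending exactly the deduction you give, namely applying the substitution $x\mapsto a^2$, $y\mapsto ab$, $z\mapsto b^2$ of Remark~\ref{enumRel} to the conclusion of Theorem~\ref{ThmSwe3poly}, just as the paper does explicitly for the analogous corollary to Theorem~\ref{ThmEvenSwe3poly}. Your verification of the three generator identities and of the degree-doubling of the weighted grading fills in precisely the routine details the paper omits.
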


Since the scalar products for $L$-codes and Kleinian codes are the same, it is clear that
for the Hamming weight enumerators of self-dual codes the same result as
for Kleinian codes hold (\cite{Ho-kleinian}, Thm.~3):
The Hamming weight enumerator is a polynomial in  ${\rm W}_{\Gamma_1}=u+v$ and ${\rm W}_{\Delta_2^+}=u^2+3v^2$.
Theorem~\ref{ThmEvenSwe3poly} implies that for even codes no further restrictions arise.


\subsection{Self-dual codes generated by short vectors}

The {\it Euclidean weight-$k$-subcode\/} of a code $C$ is the subcode
generated by all words in $C$ of Euclidean weight less than or equal to $k$.

\begin{thm}\label{thisblows}
\begin{itemize}
\item [(1)]Euclidean weight-$1$-subcodes of a self-orthogonal code $C$
can be split off: $C\cong D \oplus \Gamma_1^l$,
 where $l$ is a non-negative integer, and the minimal
Euclidean weight of $D$ is strictly larger than~$1$.
\item [(2)]The Euclidean weight-$2$-subcode of a self-orthogonal
code $C$ of minimal weight $2$ is equivalent to direct sums of $\Delta_l$, $l\geq 1$,
and $\Xi_1$.
\end{itemize}
\end{thm}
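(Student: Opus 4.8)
The plan is to prove both parts by classifying the possible shapes of codewords of small Euclidean weight and then extracting structure from the self-orthogonality of $C$ through the dot-product table on $L$. For part~(1), I would first observe that $\mbox{ewt}(\x)=1$ forces exactly one coordinate of $\x$ to equal $1$ and the rest to vanish; write $u_i$ for the word with a single $1$ in position~$i$ and set $S=\{i\mid u_i\in C\}$, $l=|S|$. The words $\{u_i\mid i\in S\}$ are supported on distinct coordinates, so their span is $\bigoplus_{i\in S}\{0,u_i\}\cong\Gamma_1^l$. The decisive point is that self-orthogonality pins down every codeword on the coordinates of~$S$: for $c\in C$ and $i\in S$ one has $0=(c,u_i)=c_i\cdot 1$, and the dot-product table shows $x\cdot 1=0$ exactly when $x\in\{0,1\}$, so $c_i\in\{0,1\}$ for every $i\in S$. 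I would then set $D:=\{c\in C\mid c_i=0\text{ for all }i\in S\}$ and verify the internal direct-sum decomposition $C=D\oplus\mbox{span}\{u_i\mid i\in S\}$: uniqueness follows from the disjoint supports, and existence because any $c\in C$ lands in $D$ after adding $\sum_{i\in S,\,c_i=1}u_i$. Since $D$ is a subcode it is self-orthogonal, and it has no weight-$1$ codeword (otherwise some index outside $S$ would belong to $S$), so its minimal Euclidean weight exceeds~$1$; hence $C\cong D\oplus\Gamma_1^l$.

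For part~(2), since $d(C)=2$ there are no weight-$1$ words, so the Euclidean weight-$2$-subcode $C_2$ is spanned by the weight-$2$ codewords, and such a word is of exactly one of two types: type~$A$, a single $\o$ or $\bo$ in one coordinate; or type~$B$, a sum $u_i+u_j$ of two distinct unit words. I would then read off three consequences of self-orthogonality. A single coordinate cannot support both an $\o$- and a $\bo$-word, since their sum is the forbidden weight-$1$ word $u_i$; as the swap $\o\leftrightarrow\bo$ is realized by an element of~$G$, each coordinate carrying a type-$A$ word contributes a copy of $\Xi_1$. Next, a type-$A$ word at coordinate~$i$ and a type-$B$ word meeting~$i$ have dot product $\o\cdot 1=1\ne 0$, so the set $S_A$ of type-$A$ coordinates is disjoint from the set $S_B$ of coordinates touched by type-$B$ words. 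Finally, and crucially, any two type-$B$ words are automatically orthogonal because $1\cdot 1=0$ makes every pairwise dot product vanish.

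It then remains to identify the span of the type-$B$ words. I would view each type-$B$ word $u_i+u_j$ as an edge $\{i,j\}$ of a graph on~$S_B$; on a connected component with $l$ vertices the $\F_2$-span of the edge vectors is exactly the even-weight binary code on those coordinates, that is, a copy of $\Delta_l$ (a spanning tree already produces every $u_a+u_b$ inside the component, and these generate all even-weight vectors). Because distinct components, and $S_A$ against $S_B$, occupy disjoint coordinate blocks, the resulting $\Xi_1$'s and $\Delta_l$'s have disjoint supports and their spans combine as a genuine direct sum, giving $C_2\cong\Xi_1^{|S_A|}\oplus\bigoplus_c\Delta_{l_c}$. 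The main obstacle is the bookkeeping in part~(2): one must check that the three orthogonality constraints are \emph{exhaustive}, so that no further weight-$2$ words or cross-interactions survive, and that the component span is genuinely all of $\Delta_l$ rather than a proper subcode — the connectivity (spanning-tree) argument is what secures this last point.
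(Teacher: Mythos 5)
Your argument is correct, and while part (1) matches the paper, your part (2) takes a genuinely different route. For part (1) the paper does the same thing iteratively: it fixes one weight-$1$ word, uses self-orthogonality ($1\cdot\omega=1\cdot\bar\omega=1$) to conclude every codeword is $0$ or $1$ in that coordinate, splits off one $\Gamma_1$, and repeats; you split off all of $\Gamma_1^{l}$ at once from the same orthogonality observation, which is only a cosmetic difference. For part (2) the paper argues by induction on the dimension of the weight-$2$-subcode $B$: a word equivalent to $(0,\ldots,0,\omega)$ splits off a $\Xi_1$ component (using that $\Xi_1$ is self-dual), while for a word $c$ equivalent to $(0,\ldots,0,1,1)$ it writes $B=B'\cup(B'+c)$ for a complement $B'$, applies the inductive hypothesis to $B'$, and observes that $c$ glues two components $\Delta_{i_1}$ and $\Delta_{i_2}$ of $B'$ into a single $\Delta_{i_1+i_2}$. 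You instead give a direct, non-inductive classification: after establishing the same orthogonality constraints (no coordinate carries both an $\omega$- and a $\bar\omega$-word; type-$A$ and type-$B$ supports are disjoint; type-$B$ words are automatically pairwise orthogonal since $1\cdot 1=0$), you model the type-$B$ words as edges of a graph and identify the span on each connected component as $\Delta_l$ by a spanning-tree/telescoping argument. Your route buys a self-contained proof in which the exhaustiveness of the case analysis is explicit, and it explains structurally why the $\Delta_l$'s occur (they are exactly the connected components); it also sidesteps a delicate point in the paper's induction, namely that the complement $B'$ must itself be generated by weight-$2$ words for the inductive hypothesis to apply, which the paper does not verify. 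The paper's induction is in turn shorter and matches the gluing language it uses later for glue codes. Two minor remarks: for your claim that a coordinate cannot carry both an $\omega$- and a $\bar\omega$-word, self-orthogonality alone already suffices ($\omega\cdot\bar\omega=1$), independently of $d(C)=2$; and coordinates touched by no weight-$2$ word contribute the $\Delta_1$ factors permitted in the statement ($l\geq 1$), which your final decomposition should mention so that it accounts for the full length~$n$.
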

\begin{proof} To prove statement~(1), let $C$ be a self-orthogonal
code with at least one weight-$1$-codeword.
A weight-$1$-codeword in $C$ is equivalent to $(0,\ldots,0,1)$, and
the weight-$1$-subcode it generates is equivalent to $\Gamma_1$. Now
define $C_x:=\{(*,\ldots,*,x)\in C\}$, where $'*'$ could be any
element of $L$. Then $C=C_0\cup C_1 \cup C_{\omega} \cup C_{\bar{\omega}}$.
Since~$C$ is self-orthogonal, the scalar product of $(0,\ldots,0,1)$
and every vector in $C$ must be zero. Thus $C=C_0\cup C_1$ because
$((0,\ldots,0,1),\,(*,\ldots,*,\omega))=((0,\ldots,0,1),\,(*,\ldots,*,\bar{\omega}))=1.$
In fact, $C_1=C_0 + (0,\ldots,0,1)$, so we can write $C=C_0'\oplus \Gamma_1$, where
$C_0'$ is the subcode of $C_0$ with the zeros in the rightmost position
deleted. Repeating the process, 
we find a code $D$ with minimal Euclidean weight larger than~$1$
and $C\cong D \oplus \Gamma_1^l$.

\smallskip

Let $B$ be the weight-$2$-subcode of $C$.
To prove statement~(2), we proceed by induction on the dimension of $B$.
If the dimension of $B$ is zero we are done.
Suppose that $B$ contains at least one non-zero weight-$2$-codeword $c$.
There are two cases to consider:

\smallskip

Case (i). The codeword is equivalent to $(0,\,\ldots,\, 0,\,\omega)$.

Then the codeword generates a copy of $\Xi_1$.
Since $\Xi_1$ is self-dual, this word generates an entire
component by the same argument as in the proof of statement~(1).
The induction step follows.

\smallskip

Case (ii). The codeword is equivalent to $(0,\,\ldots,\, 0,\,1,\,1)$.

Let $B=B'\cup (B'+c)$ for a complement $B'$ of $c$.
Then $B'\cong \Delta_1^{j_1}\oplus\cdots\oplus\Delta_r^{j_r}\oplus\Xi_1^{k}$,
for some integers $j_i\geq 0$, $r\geq 1$ and $k\geq 0$.
The two nonzero entries of $c$ must belong to the support of two different components
$\Delta_{i_1}$ and $\Delta_{i_2}$ of $B'$. Together with $c$ restricted to these two components
they generate a component $\Delta_{i_1+i_2}$. This finishes the induction step.
\end{proof}

\smallskip

The following concepts are discussed in~\cite{CoSl}.

Given a self-orthogonal code $C$, there is a standard way to
generate a larger self-orthogonal code $D$ such that $C\subset D$.
We have $C\subset C^{\bot}$. By choosing any isotropic subgroup of cosets $C^{\bot}/C$
with respect to the quadratic form induced from~$q$,
we obtain an extension $D$ of $C$, with $C\subset D\subset D^{\bot}\subset C^{\bot }$.
All self-dual codes containing $C$ are obtained in this way.

Let $\bar{C}$ be the Euclidean weight-$2$-subcode of $C$.
We call $\Lambda:=C/{\bar C}$ the {\it gluecode\/} of $C$ and note that
$\Lambda\subset {\bar{C}}^{\bot}/{\bar{C}}$. Then
${\rm Aut}(C)=G_0.G_1.G_2$, where $G_0$ are the automorphisms
preserving the components of $\bar C$ and the cosets $\Lambda/\bar C$,
$G_1$ is the quotient of the automorphisms that preserve the components of $\bar C$ by $G_0$, and
$G_2$ is the induced permutation group on the components of ${\bar C}$.

\paragraph{Example $\Delta_k^+$:}\label{deltakplus}
There is an extension of $\Delta_k$ (cf.~Section \ref{exOfLcodes}) to
a code $\Delta_k^+$ so that $\Delta_k^+$ is an even self-dual code.
The group $\Delta_k^{\bot}/\Delta_k$ has $4$ elements,
denoted by $[\,{\bf 0}\,]=(0,\,\ldots,\,0),$
$[\,{\bf a}\,]=(\omega,\,\ldots,\,\omega)$,
$[\,{\bf \bar{a}}\,]=(\bar{\omega},\,\omega,\,\ldots,\,\omega)$
and $[\,{\bf 1}\,]=(1,\,0,\,\ldots,\,0)$.
Then one defines
$\Delta_k^+=\Delta_k\cup(\Delta_k + [\,{\bf a}\,])$,
which is a group of order $2|\Delta_k|$. Hence it is self-dual, and every
element of $\Delta_k + [\,{\bf a}\,]$ has coordinates $\omega$ or
$\bar{\omega}$, so $\Delta_k^+$ is even. To compute
${\rm W}_{\Delta_k^+}(u,v)$, note that each vector in
$\Delta_k + [\,{\bf a}\,]$ is nonzero in each component; thus
$${\rm W}_{\Delta_k^+}(u,v)={\rm
W}_{\Delta_k}(u,v)+2^{k-1}v^k=\frac{1}{2}((u+v)^k+(u-v)^k+2^kv^k).$$
More specifically, each coordinate of a vector in $\Delta_k + [\,{\bf a}\,]$ is
$\omega$ or $\bar{\omega}$. Thus
$$ {\rm swe}_{\Delta_k^+}(x,y,z)={\rm
swe}_{\Delta_k}(x,y,z)+2^{k-1}z^k=\frac{1}{2}((x+y)^k+(x-y)^k+2^kz^k) $$
and
$${\rm EW}_{\Delta_k^+}(a,b)={\rm
EW}_{\Delta_k}(a,b)+2^{k-1}b^{2k}=\frac{1}{2}((a^2+ab)^k+(a^2-ab)^k+2^kb^{2k}). $$
The automorphism group of $\Delta_k^+$ is the same as of $\Delta_k$,
except we can only permute the components $\omega$ and
$\bar{\omega}$ an even number of times. Thus,
${\rm Aut}(\Delta_k^+)=S_2^{k-1}{:}S_k$.

\medskip


\begin{thm}[Relation between even and non-even self-dual codes]\label{evenodd}
\begin{itemize}
\item[(1)]
For $k\geq 1$, there is a one-to-one correspondence between isomorphism classes of pairs
$(C,\Delta_k)$, where $C$ is an even self-dual code of length~$n$ and
$\Delta_k$ is a subcode of $C$ and equivalence classes of self-dual
codes $D$ of length $n-k$.
\item[(2)] The map which assigns to an even self-dual code $C$ of length~$n$ and
a nonzero coset $[x]$ in $L^n/C$ the code $C_0\cup (C_0+y)$ where
$C_0=C\cap x^\bot$ and $y$ is a vector in~$[x]$ of odd Euclidean weight
induces a correspondence between $G$-isomorphism classes of pairs
$(C,[x])$ and equivalence classes of non-even self-dual codes $D$ of length~$n$
such that for a given equivalence class $[D]$ there are either one or two
pairs $[(C,[x])]$.
\end{itemize}
\end{thm}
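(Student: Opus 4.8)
The plan is to establish both correspondences by systematically analyzing the relationship between even and non-even self-dual codes through their intersection with a hyperplane $x^\bot$ and through subcodes isomorphic to $\Delta_k$. For part~(1), I would exploit Theorem~\ref{thisblows}(2): the weight-$2$-subcode of any self-orthogonal code is a sum of $\Delta_l$'s and $\Xi_1$'s. Given an even self-dual code $C$ of length~$n$ containing a fixed $\Delta_k$, the idea is to "quotient out" this $\Delta_k$ component to obtain a self-dual code $D$ of length $n-k$. Concretely, I would use the gluecode formalism introduced just before the theorem: writing $\bar C$ for the weight-$2$-subcode, the quotient $\bar C^\bot/\bar C$ carries an induced quadratic form, and $C/\bar C$ is isotropic there. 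The $\Delta_k$ component occupies $k$ coordinates, and by Remark~\ref{psiident} or by the explicit structure of $\Delta_k^\bot/\Delta_k$ (computed in the $\Delta_k^+$ example), collapsing these coordinates yields a well-defined self-dual code $D$ of length $n-k$. The inverse construction glues a copy of $\Delta_k$ back onto $D$ via the standard extension procedure described before the theorem. The main content is checking that these two maps are mutually inverse and respect isomorphism classes.

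For part~(2), given an even self-dual code $C$ and a nonzero coset $[x]\in L^n/C$, I would set $C_0=C\cap x^\bot$ and define $D=C_0\cup(C_0+y)$ for a vector $y\in[x]$ of odd Euclidean weight. First I would verify that $[x]\neq[0]$ guarantees $x\notin C^\bot=C$, so $C_0$ is a proper index-$2$ subgroup of $C$ (using that $x^\bot$ is a hyperplane and $C$ is not contained in it). Then $D$ has the same cardinality as $C$, hence dimension $n/2$; by Lemma~\ref{dimensionSelfDual2} it suffices to show $D$ is self-orthogonal to conclude it is self-dual. Self-orthogonality follows because $C_0$ is self-orthogonal (as a subcode of the self-orthogonal $C$) and $(y,y)=\mathrm{ewt}(y)\equiv q(y)=1 \pmod 2$ forces the scalar-product computation to close up correctly once one checks $(y,c)=0$ for all $c\in C_0$, which holds since $y\in x^{\bot\bot}$ relative to $C_0$. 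The non-evenness of $D$ is immediate: $y$ has odd Euclidean weight, so $q(y)=1$ and $D$ contains an odd-weight codeword.

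The final and subtlest assertion is the counting statement: each equivalence class $[D]$ of non-even self-dual codes arises from either one or two pairs $[(C,[x])]$. Here I would analyze the reverse construction. A non-even self-dual $D$ has an even self-orthogonal subcode $D_0$ of index~$2$ (the Hamming-even part, or rather the Euclidean-even part $\{v\in D: q(v)=0\}$); this $D_0=C\cap C'$ where $C$ and $C'$ are the (at most two) even self-dual codes containing $D_0$. The recovery of $C$ from $D$ amounts to choosing an even self-dual overcode of $D_0$, and the ambiguity "one or two" reflects whether $D_0^\bot/D_0$ admits one or two isotropic lines extending to even self-dual codes. I expect \textbf{this dichotomy to be the main obstacle}: one must show that the even-subcode $D_0$ has dual quotient $D_0^\bot/D_0\cong L$ (a four-group with the quadratic form $q$), and that the number of even self-dual extensions is governed by how many of the three nonzero elements are isotropic — giving exactly the value $1$ or $2$ depending on the isomorphism type. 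The cleanest route is probably to translate via $\phi$ into the Kleinian setting where the analogous even/odd correspondence may already be recorded in \cite{Ho-kleinian}, or to compute directly the three quadratic-form values on $D_0^\bot/D_0$ and observe that isotropic nonzero elements number either one or two.
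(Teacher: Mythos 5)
Your overall strategy for part (2) --- pass to the even subcode $D_0$ and analyze even self-dual extensions through the quadratic space $D_0^{\bot}/D_0$ --- is sound, but two of your key claims are false, and one of them sits exactly where the content of the theorem lies. First, $(y,y)={\rm ewt}(y)\equiv q(y)=1\pmod 2$ is wrong: the scalar product is the polarization of $q$, hence alternating, so $(y,y)=q(y+y)-q(y)-q(y)=0$ for \emph{every} $y\in L^n$; if $(y,y)$ really were $1$, then $D=C_0\cup(C_0+y)$ would fail to be self-orthogonal. The whole even/non-even phenomenon rests on the distinction, over $\F_2$, between $q(y)$ and $(y,y)$, so this is not a cosmetic slip. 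Second, your proposed source of the ``one or two'' dichotomy cannot occur: a two-dimensional $\F_2$-quadratic space with nondegenerate polar form has either two nonzero isotropic vectors (Arf invariant $0$, type $(L,q)$) or none (Arf invariant $1$, type $(K,q)$), never one; and since $D_0$ is totally isotropic in $(L^n,q)$, which is an orthogonal sum of hyperbolic planes (note $q(\omega)=q(\bar\omega)=0$ and $\omega\cdot\bar\omega=1$), additivity of the Arf invariant forces $D_0^{\bot}/D_0\cong(L,q)$ in every case. Hence every non-even self-dual $D$ has exactly \emph{two} even self-dual neighbours $C_1,C_2\supset D_0$, each with a distinguished coset; the ``one or two'' of the theorem counts whether the two resulting pairs are $G$-equivalent to each other or not. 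The paper's own $n=2$ illustration exhibits both behaviours: $\Upsilon_2$ arises from the two inequivalent pairs $(\Xi_1^2,[(1,1)])$ and $(\Delta_2^+,[(\omega,0)])$, whereas $\Gamma_1\Xi_1$ and $\Gamma_1^2$ each arise from a single class.

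In part (1) your construction is off by one: collapsing the $k$ coordinates carrying $\Delta_k$ via $\Delta_k^{\bot}/\Delta_k\cong L$ replaces them by \emph{one} $L$-coordinate and yields a self-dual code of length $n-k+1$, not $n-k$ (and Theorem~\ref{thisblows}(2) with the gluecode formalism is beside the point, since $\Delta_k$ is a given subcode, not the full weight-$2$ subcode). The map that works takes the index-two subcode $C'=C\cap\bigl(\{0,1\}^k\oplus L^{n-k}\bigr)$ --- proper because $C$, being even, cannot contain $(1,0,\ldots,0)$ --- and deletes the first $k$ coordinates; the kernel of this projection is exactly $\Delta_k$, and $u\cdot u'=0$ for $u,u'\in\{0,1\}$ gives self-orthogonality, so $D$ is self-dual of length $n-k$. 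In the inverse direction $C'$ is determined by $D$ (lift each $d\in D$ by the $u\in\{0,1\}^k$ with ${\rm wt}(u)\equiv q(d)\bmod 2$), but $C$ is only one of \emph{two} isotropic lines in $C'^{\bot}/C'$, and you still must check that the swap $\omega\leftrightarrow\bar\omega$ on a single $\Delta_k$-coordinate fixes $C'$ and $\Delta_k$ while exchanging the two lines, so that $(C,\Delta_k)$ is well defined up to isomorphism; this step is absent from your plan. Finally, your suggested fallback of transporting the statement through $\phi$ from the Kleinian setting of \cite{Ho-kleinian} does not work: $\phi$ does not preserve evenness (see the remark at the end of Section~\ref{sectionphi}), which is why the paper's one-line proof says the argument is ``similar'' to the Kleinian one --- it must be repeated in parallel, not pulled back along $\phi$.
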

The proof is similar as for Kleinian codes, cf.~\cite{Ho-kleinian}, p.~235.

We illustrate case (2) for $n=2$:
Representatives for the pairs $(C,[x])$ are $(\Xi_1^2,[(1,0)])$, $(\Xi_1^2,[(1,1)])$,
$(\Delta_2^+,[(1,0)])$ and $(\Delta_2^+,[(\omega,0)])$ corresponding to the non-even codes
$\Gamma_1\Xi_1$, $\Upsilon_2$, $\Gamma_1^2$ and $\Upsilon_2$, respectively.
This is in agreement with Table~\ref{length1-4} below.


\subsection{Mass formulas}

We denote by $M(n)$ be the number of distinct, but possibly equivalent,
self-dual codes $L$-codes of length $n$. Similarly, let $M_e(n)$ be
the number of even self-dual $L$-codes of length $n$.

\begin{thm}\label{massFormula}
We have
$$M(n)=\sum_{[C]}\frac{2^n \cdot n!}{
|{\rm Aut}(C)|}=\prod_{i=1}^n(2^i + 1),$$ where the sum is over
equivalence classes of self-dual codes of length $n$.
\end{thm}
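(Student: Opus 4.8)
The plan is to establish the two equalities separately; the first is bookkeeping, the second carries all the content. For the left-hand equality, recall that it was already noted after the definition of equivalence that the number of codes in the $G$-orbit of a code $C$ is $2^n\cdot n!/|{\rm Aut}(C)|$. This is just the orbit--stabilizer theorem applied to the action of $G=S_2^n{:}S_n$ on $L^n$: one has $|G|=2^n\cdot n!$, and ${\rm Aut}(C)$ is by definition the stabilizer of $C$. Since the self-dual codes of length $n$ are partitioned into $G$-orbits, i.e.\ equivalence classes $[C]$, summing the orbit sizes over all classes recovers the total number $M(n)$ of self-dual codes. No further work is needed here.

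For the right-hand equality I would reduce to Kleinian codes. By Remark~\ref{basicphi}, the map $\phi\colon L^n\to K^n$ is a bijective homomorphism preserving the scalar product, and by Remark~\ref{selfdualphi} both $\phi$ and $\phi^{-1}$ carry self-dual codes to self-dual codes. Hence $\phi$ restricts to a bijection between the set of self-dual $L$-codes of length $n$ and the set of self-dual Kleinian codes of length $n$, so $M(n)$ equals the number of self-dual Kleinian codes of length $n$. That number is the known mass formula $\prod_{i=1}^n(2^i+1)$ for additive self-dual codes over the Kleinian four group \cite{CRSS-quant,Ho-kleinian}, which finishes the proof. This is exactly the strategy of deducing a result from the Kleinian case announced in the introduction.

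Alternatively, one can argue intrinsically and prove the product formula directly. Via the scalar product, $L^n\cong\F_2^{2n}$ becomes an $\F_2$-vector space carrying the bilinear form $(\,.\,,\,.\,)$. From the dot-product table one reads off $x\cdot x=0$ for every $x\in L$, so the form is alternating, and its Gram matrix on each copy of $L$ is the hyperbolic plane, hence the form is nondegenerate; thus $(\F_2^{2n},(\,.\,,\,.\,))$ is a symplectic space. A self-dual code is precisely a maximal totally isotropic subspace, which by Lemma~\ref{dimensionSelfDual2} has $\F_2$-dimension $n$, i.e.\ a Lagrangian. Writing $N(n)$ for the number of Lagrangians, I would count incident pairs $(v,W)$ with $v\neq 0$, $v\in W$, and $W$ a Lagrangian: each Lagrangian contributes $2^n-1$ choices of $v$, while for each of the $2^{2n}-1$ nonzero (automatically isotropic) vectors $v$ the Lagrangians through $v$ biject with those of the symplectic reduction $v^\bot/\langle v\rangle$ of dimension $2(n-1)$. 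This yields $N(n)(2^n-1)=(2^{2n}-1)N(n-1)$, hence $N(n)=(2^n+1)N(n-1)$ and $N(n)=\prod_{i=1}^n(2^i+1)$.

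The first equality and the reduction through $\phi$ are routine. The only genuine content, and thus the main obstacle, is the closed-form enumeration of self-dual codes. Taking the reduction route this obstacle is entirely outsourced to the cited Kleinian mass formula; taking the intrinsic route it becomes the symplectic-reduction induction, whose delicate point is verifying that $v^\bot/\langle v\rangle$ is again a nondegenerate symplectic space and that passing to this quotient sets up the claimed bijection on Lagrangians. I expect to present the reduction to Kleinian codes as the proof and to relegate the symplectic count to a remark.
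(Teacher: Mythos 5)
Your main argument is essentially identical to the paper's proof: the first equality comes from the orbit formula for the $G$-action on the set of self-dual codes, and the second from the fact that $\phi$ is a self-duality-preserving bijection between $L^n$ and $K^n$, combined with the Kleinian mass formula (Theorem~6 of \cite{Ho-kleinian}). The intrinsic symplectic argument you relegate to a remark is also correct and is worth noting as the one thing the paper does not contain: since the scalar product is a nondegenerate alternating $\F_2$-bilinear form on $L^n\cong\F_2^{2n}$, self-dual codes are exactly the Lagrangians, and your incidence count $N(n)(2^n-1)=(2^{2n}-1)N(n-1)$ gives $\prod_{i=1}^n(2^i+1)$ directly; this is in effect the proof of the cited Kleinian result, so including it would make the theorem self-contained rather than outsourced.
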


\begin{proof} Consider the action of $G=S_2^n{:}S_n$ on
the set of self-dual $L$-codes of length $n$. Then the right hand
expression for $M(n)$ is derived from the orbit formula.

By Theorem~6 in \cite{Ho-kleinian}, the total number of self-dual Kleinian codes
of length~$n$ is $\prod_{i=1}^n(2^i +1)$.
As discussed in Section \ref{sectionphi}, $\phi$ is a bijective map between
self-dual $L$-codes of length $n$ and self-dual Kleinian codes of
length $n$. This proves that the number of $L$-codes in of length~$n$ must also be
$\prod_{i=1}^n(2^i + 1)$.
\end{proof}


\begin{thm}\label{massFormula2} We have
$$M_e(n)=\sum_{[C]}\frac{2^n \cdot n!
}{|{\rm Aut}(C)|}=\prod_{i=0}^{n-1}(2^i + 1),$$ where the sum is
over equivalence classes of even self-dual codes of length $n$.
\end{thm}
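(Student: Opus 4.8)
The plan is to deduce the formula from the mass formula for all self-dual codes (Theorem~\ref{massFormula}) together with the even/non-even correspondence (Theorem~\ref{evenodd}). First I would note, exactly as in the proof of Theorem~\ref{massFormula}, that by the orbit--stabilizer relation the mass $\sum_{[C]}\frac{2^n\cdot n!}{|{\rm Aut}(C)|}$ equals the raw number of even self-dual codes $C\subset L^n$, since $G$ preserves both evenness and self-duality; likewise $M(n)=\prod_{i=1}^n(2^i+1)$ counts all self-dual codes in $L^n$. So it suffices to count even self-dual codes as subsets of $L^n$ and to relate this count to $M(n)$.

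Next I would count pairs $(C,[x])$ with $C$ an even self-dual code and $[x]$ a nonzero coset in $L^n/C$. A self-dual code has $|C|=2^n$ by Lemma~\ref{dimensionSelfDual2}, so there are $4^n/2^n=2^n$ cosets and hence $M_e(n)\,(2^n-1)$ such pairs. The assignment of Theorem~\ref{evenodd}(2) sends each pair to a non-even self-dual code $D$, and the crucial claim is that, at the level of codes (not equivalence classes), this map is exactly two-to-one. Indeed, given a non-even self-dual $D$, the restriction $q|_D$ is a nonzero linear form (linear because $(u,v)=0$ for $u,v\in D$), so its kernel $D_0$ is the even subcode of index~$2$ in $D$; one then recovers $C$ as an even self-dual extension of $D_0$ with $C\cap x^{\bot}=D_0$, and $[x]$ is forced, since the vectors inducing the prescribed functional on $C$ form a single coset modulo $C^{\bot}=C$. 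Thus the fiber over $D$ is in bijection with the even self-dual codes containing $D_0$, and the remaining task is to show there are exactly two of these.

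The main obstacle is therefore this last count, and I would handle it by passing to the two-dimensional $\F_2$-space $D_0^{\bot}/D_0$ (note $\dim_{\F_2}D_0=n-1$), on which $q$ induces a nondegenerate quadratic form whose isotropic lines correspond to the self-dual codes between $D_0$ and $D_0^{\bot}$. There are three such lines, and the even extensions are exactly those on which the induced form vanishes. Here I would use that $(L^n,q)$ is of plus type: each coordinate space $(L,q)$ is a hyperbolic plane, having the two nonzero singular vectors $\o$ and $\bo$, so the Arf invariant of $q$ is $0$; consequently the induced form on $D_0^{\bot}/D_0$ is again of plus type and has precisely two singular (hence even) lines. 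This yields exactly two distinct even self-dual extensions $C_1\neq C_2$, both mapping back to the unique non-even extension~$D$, confirming the two-to-one property. Combining the two counts gives $M_e(n)(2^n-1)=2\bigl(M(n)-M_e(n)\bigr)$, whence $M_e(n)(2^n+1)=2M(n)=2\prod_{i=1}^n(2^i+1)$ and therefore $M_e(n)=\prod_{i=0}^{n-1}(2^i+1)$.

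I note that the same plus-type observation yields a direct alternative: even self-dual codes are precisely the maximal totally singular subspaces of the quadratic $\F_2$-space $(L^n,q)\cong O^+_{2n}(\F_2)$, and the classical count of such subspaces for the hyperbolic form is $\prod_{i=0}^{n-1}(2^i+1)$. Either route isolates the plus-type property of $q$ as the essential input; everything else is bookkeeping with Theorem~\ref{massFormula} and the orbit--stabilizer relation.
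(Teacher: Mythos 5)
Your proof is correct, but it takes a genuinely different route from the paper's. The paper reduces the statement to binary coding theory: even self-dual $L$-codes of length $n$ are identified with doubly-even self-dual binary codes of length $8n$ containing a fixed $3n$-dimensional doubly-even self-orthogonal subcode $d_8^n$, and the count $\prod_{i=0}^{\frac{1}{2}(8n)-3n-1}(2^i+1)=\prod_{i=0}^{n-1}(2^i+1)$ is then quoted from \cite{MacSl}, Ch.~19, Thm.~22. You instead stay entirely inside $L$-codes: you combine $M(n)=\prod_{i=1}^{n}(2^i+1)$ (Theorem~\ref{massFormula}) with a raw-code-level refinement of Theorem~\ref{evenodd}(2), showing that the assignment $(C,[x])\mapsto D$ is exactly two-to-one on codes, rather than ``one or two'' on equivalence classes. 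Your fiber analysis is sound: the even subcode of $D$ recovers $C_0=D_0$; the coset $[x]$ is forced because the odd-weight vectors in $[x]$ are precisely $D\setminus D_0$; and the even self-dual codes containing $D_0$ correspond to the singular lines of the two-dimensional quadratic space $D_0^{\bot}/D_0$, of which there are exactly two because this space inherits plus type (Arf invariant~$0$) from $(L^n,q)$, each coordinate $(L,q)$ being a hyperbolic plane. The bookkeeping $M_e(n)(2^n-1)=2\bigl(M(n)-M_e(n)\bigr)$ together with $2\prod_{i=1}^{n-1}(2^i+1)=\prod_{i=0}^{n-1}(2^i+1)$ then gives the claim. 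What the paper's route buys is brevity, outsourcing the combinatorics to a classical binary-code theorem (in line with its stated policy of deducing mass formulas from Kleinian or binary codes when convenient); what yours buys is a self-contained argument that isolates the plus-type property of $q$ as the structural reason for the formula, and in passing sharpens the counting content of Theorem~\ref{evenodd}(2). Your closing alternative---counting maximal totally singular subspaces of the $2n$-dimensional plus-type quadratic space over $\F_2$ directly---is cleaner still, yielding $M_e(n)=\prod_{i=0}^{n-1}(2^i+1)$ without invoking Theorem~\ref{massFormula} at all.
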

\begin{proof} The first equation is again clear. For the second,
we use that even self-dual $L$-codes of length $n$ can be identified with doubly-even
self-dual binary codes of length~$8n$ containing the $3n$-dimensional
doubly-even self-orthogonal code $d_8^n$ as a subcode. Here, $d_8$ is the binary
code generated by the codewords
$$(1,\,1,\,1,\,1,\,0,\,0,\,0,\,0),\  (0,\,0,\,1,\,1,\,0,\,0,\,0,\,0)\  \hbox{and } (0,\,0,\,0,\,0,\,1,\,1,\,1,\,1).$$
The number of doubly-even self-dual binary codes of length $m$ which
contain a fixed $k$-dimensional doubly-even self-orthogonal subcode
including the overall one vector $(1,\,\ldots,\, 1)$ is
$\prod_{i=0}^{\frac{1}{2}m-k-1}(2^i+1)$ (see~\cite{MacSl}, Ch.~19, Thm.~22).
Thus
$$M_e(n)=\prod_{i=0}^{\frac{1}{2}8n-3n-1}(2^i+1)=\prod_{i=0}^{n-1}(2^i+1).$$
\end{proof}



\subsection{Classification of self-dual codes up to length $10$}\label{SectionClassCodes}

In this section, we classify self-dual $L$ codes up to length~$10$.
We list them explicitly for length up to $4$ and give for larger
lengths the number of such codes according to their minimal Euclidean weight.

\begin{thm}\label{classCodes4} The inequivalent self-dual $L$-codes up to length~$4$
are listed in Table~\ref{length1-4} and~\ref{oddlength4}. The symmetric and
Euclidean weight enumerators of the codes up to length~$3$ and of the even
codes of length~$4$ are given in Table~\ref{poly}.
\end{thm}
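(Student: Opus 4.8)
The plan is to enumerate, for each length $n \le 4$, a complete and irredundant list of self-dual $L$-codes and then certify completeness using the mass formula. By Lemma~\ref{dimensionSelfDual2} a self-dual code of length $n$ has dimension $n/2$, so it contains exactly $2^n$ codewords; this fixes the size of the objects to be found and bounds the search.

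First I would reduce the search by invoking the structural results already established. By Theorem~\ref{thisblows}(1), any weight-$1$ part splits off as copies of $\Gamma_1$, and by part~(2) the weight-$2$-subcode is equivalent to a direct sum of the codes $\Delta_l$ and $\Xi_1$; since every code decomposes uniquely into indecomposables, it suffices to build the indecomposable self-dual codes from these pieces together with the ``glue'' coming from codewords of higher Euclidean weight. Theorem~\ref{evenodd} further reduces the non-even codes to even self-dual codes of the same or smaller length, so the core task is to list the even indecomposable self-dual codes of length $\le 4$ --- for instance $\Xi_1$, $\Delta_2^+$, $\Upsilon_3$, and the length-$4$ even codes --- and then reconstruct the remaining codes (such as $\Gamma_1$, $\Upsilon_2$, and their direct sums with the even ones) from them.

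For each candidate $C$ I would compute $|{\rm Aut}(C)|$, either directly from the wreath-product action of $G = S_2^n{:}S_n$ on the explicit generators, or, for the less transparent cases, via Lemma~\ref{betaAutomorphism} by passing to the binary code $\beta(C)$ and using Magma. With the automorphism orders in hand, the completeness of each list follows from Theorem~\ref{massFormula}: I would verify that $\sum_{[C]} \frac{2^n \cdot n!}{|{\rm Aut}(C)|} = \prod_{i=1}^n (2^i+1)$, where the sum runs over the listed classes, and likewise that the even codes satisfy the identity of Theorem~\ref{massFormula2}. Since each listed code is manifestly self-dual and the listed codes are pairwise inequivalent, matching the mass-formula total guarantees that no equivalence class has been omitted.

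Finally, the weight enumerators in Table~\ref{poly} are obtained by a direct count of codewords by type for each explicit code, which is routine at these lengths; as a consistency check the even-code entries must lie in the invariant ring described by Theorem~\ref{ThmEvenSwe3poly} and the general entries in that of Theorem~\ref{ThmSwe3poly}. The main obstacle is not any single computation but exhaustiveness: one must be certain that the enumeration of gluings (equivalently, of isotropic subgroups of $\bar{C}^\bot/\bar{C}$ over the weight-$\le 2$ cores) produces every equivalence class. The mass formula is precisely the device that closes this gap, converting an otherwise delicate case analysis into a single numerical identity that certifies the list is complete.
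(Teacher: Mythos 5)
Your proposal is correct in outline, but it takes a genuinely different route from the paper. The paper does not build the codes by gluing weight-$\le 2$ cores; it transfers the entire problem to Kleinian codes: every self-dual $L$-code maps under $\phi$ to a self-dual Kleinian code, the self-dual Kleinian codes of length $\le 4$ are already classified in \cite{Ho-kleinian} (direct sums of the six indecomposables $\gamma_1$, $\epsilon_2$, $\delta_2^+$, $\delta_3^+$, $\delta_4^+$, $(\delta_2^2)^+$), and by Lemma~\ref{one2one} the equivalence classes of $L$-codes $C$ with $[\phi(C)]=[D]$ are in bijection with the equivalence classes of markings of $D$. The enumeration is thus a finite orbit computation: list the ${\rm Aut}(D)$-orbits on markings $\mathcal{M}$, set $C=\phi_{\mathcal{M}}^{-1}(D)$, and read off ${\rm Aut}(C)$ as the marking stabilizer ${\rm Aut}_D(\mathcal{M})$; completeness is automatic from the bijection, and the mass formulas (Theorems~\ref{massFormula}, \ref{massFormula2}) together with the marking count $3^n$ of Lemma~\ref{countMarkings} serve only as independent checks. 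In your argument the logic is inverted: gluing theory (Theorems~\ref{thisblows} and~\ref{evenodd}) generates candidates, and the mass formula is the proof of completeness. That is legitimate --- since all orbit sizes are positive, a pairwise-inequivalent list whose masses sum to $\prod_{i=1}^n(2^i+1)$ must be exhaustive --- and it is the classical method used for binary self-dual codes and unimodular lattices. What it costs you: the gluing framework gives no guidance for codes whose Euclidean weight-$2$-subcode is trivial (e.g.\ $\Upsilon_3$, $\Sigma_4$, and the several length-$4$ codes denoted $-_4^+$), so these must be found by unstructured search; moreover you must separately certify pairwise inequivalence and the exact orders $|{\rm Aut}(C)|$ for every candidate, since an error in either direction silently breaks the mass count (and symmetric weight enumerators alone do not always distinguish the classes). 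The marking correspondence buys all of this at once --- candidates, automorphism groups as stabilizers, and completeness. Note finally that your route is not really more self-contained: the mass formula you rely on (Theorem~\ref{massFormula}) is itself proved by transport through $\phi$ to the Kleinian count.
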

The columns of Table \ref{length1-4} and \ref{oddlength4} contain the following information:\newline
\begin{tabular}{rl}
$n$: & The length of the code $C$. \\
No.: & The number of a code of given length. \\
$C$: & The name of $C$ explained below.  \\
type: &  The type: even codes are marked Y, and noneven codes are marked N.  \\
${\rm Aut}(C)$: & The automorphism group of $C$.  \\
$|\,[C]\,|$ : &  The number $\frac{2^n \cdot n!}{|{\rm Aut}(C)|}$ of $L$-codes equivalent to $C$. \\
$\phi(C)$: & The Kleinian code of length $n$ obtained by $\phi$.  \\
$\mathcal{M}$: &  A marking of $\phi(C)$ such that $\phi_\mathcal{M}^{-1}(\phi(C))=C$.  \\
$\sigma(C)$: &  The Kleinian code of length $2n$ obtained by $\sigma$. \\
$\psi(C)$: &  The $L$-code of length $2n$ obtained by $\psi$. \\
\end{tabular}


We use the following naming conventions of $L$-codes.
One symbol is used for an indecomposable subcode of $C$, so that a decomposition is
given in the naming of $C$. The subscripts 
denote the length of a code, and the superscripts denote the number of copies
in the direct sum. A superscript $+$ indicates the gluing of subcodes to
an indecomposable component.
Often, the symbol of an indecomposable subcode $D$ is a capital Greek
letter chosen such that the Kleinian code $\sigma(D)$ is the corresponding
lower case Greek capital letter.

For the names of specific $L$-codes, we refer first to Sections \ref{exOfLcodes}
and~\ref{deltakplus}. Otherwise, the following names are used:

The $L$-code $\Sigma_n$ for $n\geq 3$ is defined to be
$\phi_{\mathcal{M}}^{-1}(\delta_n^+)$ for
$\mathcal{M}=(c,\,c,\,\ldots,\,c)$.
The $L$-code $\mathcal{J}_n$ for $n\geq 3$ is defined to be
$\phi_{\mathcal{M}}^{-1}(\delta_n^+)$ for 
$\mathcal{M}=(b,\,b,\,\ldots,\,b)$ if $n$ is odd and
$\mathcal{M}=(b,\,b,\,\ldots,b,\,\,c)$ if $n$ is even.
Note that $\Sigma_3$ equals $\Upsilon_3$ and $\mathcal{J}_2$ would be $\Upsilon_2$.



The $L$-codes  $\mathcal{P}_3$ and $\mathcal{Q}_3$
are named based on the markings $(a,\,a,\,b)$  and $(a,\,b,\,b)$ 
for $\delta_3^+$, respectively.


We let $\mathcal{D}_{2n}=\phi^{-1}(\sigma(\Sigma_n))=\psi(\Sigma_n)$.

%

The $L$-code $-_n$ stands for the zero-dimensional code of length $n$ and $L$-codes marked "nc" are not named.

For the names of Kleinian codes, we refer to~\cite{Ho-kleinian}.
\medskip


\begin{table}\caption{Self-dual codes of length up to $3$ and even self-dual codes of length~$4$}\label{length1-4}
\center{
\begin{tabular}{|cr|c|l|c|r|cc|c|c|}
  \hline
 $n$& No.&type & $ C$ & ${\rm Aut}(C)$ & $|\,[C]\,|$ &$\phi(C)$ &$\mathcal{M}$ &$\sigma(C)$ & $\psi(C)$ \\ 
  \hline \hline

1 &  1&Y & $\Xi_1$ & $e$ & 2  & $\gamma_1$ & $(b)$ & $\epsilon_2$ & $\Delta_2^+$ \\ %

  &  2&N & $\Gamma_1$ & $2$ & 1  & $\gamma_1$ & $(a)$ &$\gamma_1^2$  & $\Gamma_1^2$ \\ %

\hline

2 &  1 & Y & $\Xi_1^2$ & $S_2$ & 4  & $\gamma_1^2$ & $(bb)$  & $\epsilon_2^2$ & $(\Delta_2^+)^2$ \\

  & 2 & Y & $\Delta_2^+$ & $2{:}S_2$ & 2  & $\epsilon_2$ & $(aa)$ & $\delta_4^+$ & $\Delta_4^+$\\

  & 3 & N &$\Gamma_1^2$ & $2^2{:}S_2$ & 1  & $\gamma_1^2$ & $(aa)$ & $\gamma_1^4$ & $\Gamma_1^4$\\

  & 4 & N &$\Gamma_1\Xi_1$ & ${2}$ & 4 & $\gamma_1^2$& $(ab)$  & $\gamma_1^2\epsilon_2$ & $\Gamma_1^2\Delta_2^+$\\

  & 5 & N & $-_2^+=\Upsilon_2$ & $S_2$ & 4  & $\epsilon_2$& $(ab)$  & $(\delta_2^2)^+$ & $\mathcal{D}_4$\\

\hline

3 & 1 & Y & $\Delta_3^+$ & $2^2{:}S_3$ & 2 & $\delta_3^+$& $(aaa)$ &$\delta_6^+$ & $\Delta_6^+$\\

  & 2 &Y &  $-_3^+=\Upsilon_3=\Sigma_3$ & $S_3$ & 8 & $\delta_3^+$& $(ccc)$ & $(\delta_2^3)^+$ & $\mathcal{D}_6$ \\

  & 3 &Y & $\Delta_2^+\Xi_1$ & ${2}{:}S_2$& 12 &$\epsilon_2\gamma_1$& $(aab)$ & $\delta_4^+\epsilon_2$ & $\Delta_4^+\Delta_2^+$ \\

  & 4 &Y & $\Xi_1^3$ & $S_3$ & 8 & $\gamma_1^3$ & $(bbb)$& $\epsilon_2^3$ & $(\Delta_2^+)^3 $\\

  & 5 & N &  $-_3^+=\mathcal{J}_3$ & $S_3$ & 8 & $\delta_3^+$ & $(bbb)$ & $(\delta_2^3)^+$& nc \\

  & 6 & N & $-_3^+=\mathcal{Q}_3$ & $S_2$ & 24 & $\delta_3^+$ & $(abb)$ & $(\delta_2^2\delta_2)^+$ & nc \\

  & 7 & N & $(\Delta_2\,-_1)^+ =\mathcal{P}_3$ & $2{:}S_2$ & 12 & $\delta_3^+$ & $(aab)$ & $(\delta_4\delta_2)^+$ & nc \\

  & 8 & N  & $\Delta_2^+\Gamma_1$ & $2{:}S_2\times 2$ & 6 & $\epsilon_2\gamma_1$ & $(aab)$ & $\delta_4^+\gamma_1^2$ & $\Delta_4^+\Gamma_1^2$\\

  & 9 & N & $\Upsilon_2\Gamma_1$ & $S_2\times2$ & 12 & $\epsilon_2\gamma_1$ & $(aba)$ & $(\delta_2^2)^+\gamma_1^2$ & $\mathcal{D}_4\Gamma_1^2$\\

  & 10 & N & $\Upsilon_2\Xi_1$ & $S_2$ & 24 & $\epsilon_2\gamma_1$ & $(abb)$ & $(\delta_2^2)^+\epsilon_2$ & $\mathcal{D}_4\Delta_2^+$ \\

  & 11 &N  & $\Gamma_1^3$ & $2^3{:}S_3$ & 1 & $\gamma_1^3$ & $(aaa)$ & $\gamma_1^6$ & $\Gamma_1^6$\\

  & 12 &N  & $\Gamma_1^2\Xi_1$ & $2^2{:}S_2$ & 6 & $\gamma_1^3$ & $(aab)$ & $\gamma_1^4\epsilon_2$ & $\Gamma_1^4\Delta_2^+$\\

  & 13 &N  & $\Gamma_1\Xi_1^2$ & $2\times S_2$ & 12 & $\gamma_1^3$ & $(abb)$ & $\gamma_1^2\epsilon_2^2$ & $\Gamma_1^2(\Delta_2^+)^2$\\

\hline

4 & 1  & Y & $\Delta_4^+$ & $2^3{:}S_4$ & 2 & $\delta_4^+$ & $(aaaa)$  & $\delta_8^+$ & $\Delta_8^+$\\

  & 2  & Y& $-_4^+=\Sigma_4$ & $S_4$ & 16 & $\delta_4^+$ & $(cccc)$  & $(\delta_2^4)^+_a$ & $\mathcal{D}_8$ \\

  & 3  & Y& $(\Delta_2\,-_2)^+$ & $(2{:}S_2).S_2$ & 48 & $(\delta_2^2)^+$ & $(aabc)$  & $(\delta_4\delta_2^2)^+$ & nc \\

  & 4  & Y& $-_4^+$ & $S_2^3$ & 48 & $(\delta_2^2)^+$ & $(bbbb)$ & $(\delta_2^4)^+_b$ & nc \\

  & 5  & Y&$(\Delta_2^+)^2$ & $(2{:}S_2)^2{:}S_2$ & 12 & $\epsilon_2^2$  & $(aaaa)$ & $(\delta_4^+)^2$ & $(\Delta_4^+)^2$\\

  & 6  & Y&$\Xi_1^4$ & $S_4$ & 16  & $\gamma_1^4$ & $(bbbb)$ & $\epsilon_2^4$ & $(\Delta_2^+)^4$ \\

  & 7  & Y&$\Delta_2^+\Xi_1^2$ & $(2{:}S_2)\times S_2$ & 48 & $\epsilon_2\gamma_1^2$ & $(aabb)$  & $\delta_4^+\epsilon_2^2$ & $\Delta_4^+(\Delta_2^+)^2$\\

  & 8  & Y& $\Delta_3^+\Xi_1$ & $2^2{:}S_3$ & 16 & $\delta_3^+\gamma_1$  & $(aaab)$ & $\delta_6^+\epsilon_2$ & $\Delta_6^+\Delta_2^+$\\

  & 9  & Y&$\Upsilon_3\Xi_1$ & $S_3$ & 64 & $\delta_3^+\gamma_1$ & $(cccb)$  & $(\delta_2^3)^+\epsilon_2$ & $\mathcal{D}_6\Delta_2^+$\\
\hline
\end{tabular}}
\end{table}

\begin{table}\caption{The non-even self-dual codes of length $4$}\label{oddlength4}
\center{
\begin{tabular}{|cr|c|l|c|r|cc|}
  \hline
$n$ &  No. & type & $C$ & ${\rm Aut}(C)$ & $|\,[C]\,|$  & $\phi(C)$  & $\mathcal{M}$ \\ 
  \hline
  \hline
4 &  10 & N& $(\Delta_3\,-_1)^+$ & $2^2{:}S_3$ & 16 & $\delta_4^+$ & $(aaab)$ \\ 

  &  11 & N& $(\Delta_2\,-_2)^+$ & $2.S_2^2$ & 48 & $\delta_4^+$ & $(aabb)$ \\ 

  &  12 & N& $-_4^+$ & $S_3$ & 64 & $\delta_4^+$ & $(abbb)$ \\ 

  &  13 & N& $-_4^+=\mathcal{J}_4$ & $S_4$ & 16 & $\delta_4^+$ & $(bbbc)$ \\ 

  &  14 & N&$\Delta_3^+\Gamma_1$ & $(2^2{:}S_3)\times S_2$ & 8 & $\delta_3^+\gamma_1$&$(aaaa)$  \\ 

  &  15 & N&$\mathcal{J}_3\Gamma_1$ & $S_3\times S_2$ & 32 &$\delta_3^+\gamma_1$ &$(bbba)$ \\ 

  &  16 & N&$\mathcal{J}_3\Xi_1$ & $S_3$ & 64 & $\delta_3^+\gamma_1$ &$(bbbb)$ \\ 

  &  17 & N&$\mathcal{Q}_3\Gamma_1$ & $S_2 \times 2$ & 96 & $\delta_3^+\gamma_1$ &$(abba)$ \\ 

  &  18 & N&$\mathcal{Q}_3\Xi_1$ & $S_2$ & 192 & $\delta_3^+\gamma_1$ &$(abbb)$ \\ 

  &  19 & N&$\mathcal{P}_3\Gamma_1$ & $(2{:}S_2)\times 2$ & 48 & $\delta_3^+\gamma_1$ &$(aaba)$ \\ 

  &  20 & N&$\mathcal{P}_3\Xi_1$ & $2{:}S_2$ & 96 & $\delta_3^+\gamma_1$ &$(aabb)$ \\ 

  &  21 & N& $\Upsilon_3\Gamma_1$ & $S_3\times 2$ & 32 &$\delta_3^+\gamma_1$ &$(ccca)$ \\ 

  &  22 & N& $(\Delta_2^2)^+=\mathcal{D}_4$ & $(2{:}S_2)^2{:}S_2$& 12 & $(\delta_2^2)^+$ & $(aaaa)$ \\ 

  &  23 & N& $(\Delta_2\,-_2)^+$ & $2{:}S_2$& 96 & $(\delta_2^2)^+$ & $(aaab)$\\ 

  &  24 & N& $(\Delta_2\,-_2)^+$ & $(2{:}S_2)\times S_2$& 48 & $(\delta_2^2)^+$ & $(aabb)$ \\ 

  &  25 & N& $-_4^+$  & $S_2$ & 192 & $(\delta_2^2)^+$ & $(abab)$ \\ 

  &  26 & N& $-_4^+$  & $S_2$ & 192 & $(\delta_2^2)^+$ & $(abbb)$ \\ 

  &  27 & N& $-_4^+$  & $S_2$ & 192 & $(\delta_2^2)^+$ & $(abbc)$ \\ 

  &  28 & N& $-_4^+$  & $S_2\times S_2$ & 96 & $(\delta_2^2)^+$ & $(bbbc)$ \\ 

  &  29 & N& $-_4^+$  & $S_2^2.S_2$ & 48 & $(\delta_2^2)^+$ & $(bcbc)$ \\ 

  & 30 & N&$\Delta_2^+\Upsilon_2$ & $(2{:}S_2)\times S_2$ & 48 & $\epsilon_2^2$&$(aaab)$ \\ 

  &  31 & N&$\Upsilon_2^2$ & $S_2^2{:}S_2$ & 48 & $\epsilon_2^2$ &$(abab)$ \\ 

  & 32 & N&$\Delta_2^+\Gamma_1^2$ & $(2{:}S_2)\times (2^2{:}S_2)$ & 12 & $\epsilon_2\gamma_1^2$ &$(aaaa)$ \\ 

  & 33 & N&$\Delta_2^+\Gamma_1\Xi_1$ & $2{:}S_2\times 2$ & 48 & $\epsilon_2\gamma_1^2$ &$(aaab)$\\ 

  & 34 & N&$\Upsilon_2\Gamma_1^2$ & $S_2\times (2^2{:}S_2)$& 24 &$\epsilon_2\gamma_1^2$ &$(abaa)$\\ 

  & 35 & N&$\Upsilon_2\Gamma_1\Xi_1$ & $S_2\times 2$ & 96 & $\epsilon_2\gamma_1^2$ &$(abab)$ \\ 

  & 36 & N&$\Upsilon_2\Xi_1^2$ & $S_2\times S_2$ & 96 & $\epsilon_2\gamma_1^2$ &$(abbb)$\\ 

  & 37 & N&$\Gamma_1^4$ & $2^4{:}S_4$ & 1  & $\gamma_1^4$ &$(aaaa)$\\ 

  & 38 & N&$\Gamma_1^3\Xi_1$ & $2^3{:}S_3$ & 8 & $\gamma_1^4$ &$(aaab)$ \\ 

  & 39 & N&$\Gamma_1^2\Xi_1^2$ & $ (2^2{:}S_2)\times S_2$ & 24 & $\gamma_1^4$&$(aabb)$ \\ 

  &  40 & N&$\Gamma_1\Xi_1^3$ & $2\times S_3$ & 32 & $\gamma_1^4$ &$(abbb)$ \\ 
  \hline
\end{tabular}}
\end{table}

\begin{table}\caption{Weight enumerators of self-dual codes of length up to $3$}\label{poly}\small
\center{
\begin{tabular}{|c|c|l|l|}
  \hline
  No. & $C$ & ${\rm swe}_C(x,y,z)$ & ${\rm EW}_C(a,b)$ \\
  \hline
  \hline

1 &  $\Xi_1$ & $x+z$ & $a^2+ b^2$ \\

2 & $\Gamma_1$ & $x+y$ & $a^2+ab$   \\

\hline

  1 & $\Xi_1^2$ &  $x^2+2xz+z^2$ & $a^4+2a^2b^2+b^4$ \\

  2 & $\Delta_2^+$ & $x^2+y^2+2z^2$ & $a^4+a^2b^2+2b^4$ \\

  3 & $\Gamma_1^2$ & $x^2+2xy+y^2$ & $a^4+2a^3b+a^2b^2$ \\

  4 & $\Gamma_1\Xi_1$ & $x^2+xy+xz+yz$ & $a^4+a^3b+a^2b^2+ab^3$ \\

  5 & $\Upsilon_2$  & $x^2+2yz+z^2$ & $a^4+2ab^3+b^4$ \\

\hline

  1 &$\Delta_3^+$ & $x^3+3xy^2+4z^3$   & $a^6+3a^4b^2+4b^6$ \\

  2 &$\Upsilon_3$ & $x^3+3y^2z+3xz^2+z^3$  & $a^6+6a^2b^4+b^6$\\

  3 &$\Delta_2^+\Xi_1$ & $x^3+xy^2+2xz^2+x^2z+y^2z+2z^3$ & $a^6+2a^4b^2+3a^2b^4+2b^6$\\

  4 &$\Xi_1^3$ & $x^3+3x^2z+3xz^2+z^3$  & $a^6+3a^4b^2+3a^2b^4+b^6$ \\

  5 &$\mathcal{J}_3$ & $x^3+y^3+3xz^2+3yz^2$   & $a^6+a^3b^3+3a^2b^4+3ab^5$ \\

  6 &$\mathcal{Q}_3$ & $x^3+xy^2+2xyz+2yz^2+2z^3$ & $a^6+a^4b^2+2a^3b^3+2ab^5+2b^6$\\

  7 &$\mathcal{P}_3$ & $x^3+2xyz+y^2z+xz^2+2yz^2+z^3$   & $a^6+2a^3b^3+2a^2b^4+2ab^5+b^6$ \\

  8 &$\Delta_2^+\Gamma_1$ & $x^3+x^2y+xy^2+2xz^2+y^3+2yz^2$  &   $a^6+a^5b+a^4b^2+a^3b^3+2a^2b^4+2ab^5$\\

  9 &$\Upsilon_2\Gamma_1$ &  $x^3+x^2y+2xyz+xz^2+2y^2z+yz^2$ & $a^6+a^5b+2a^3b^3+3a^2b^4+ab^5$ \\

  10 &$\Upsilon_2\Xi_1$ & $x^3+x^2z+2xyz+xz^2+2yz^2+z^3$  & $a^6+a^4b^2+2a^3b^3+a^2b^4+2ab^5+b^6$\\

  11 &$\Gamma_1^3$ &$x^3+3x^2y+3xy^2+y^3$  & $a^6+3a^5b+3a^4b^2+a^3b^3$ \\

  12 &$\Gamma_1^2\Xi_1$ &  $x^3+2x^2y+x^2z+xy^2+2xyz+y^2z$ & $a^6+2a^5b+2a^4b^2+2a^3b^3+a^2b^4$\\

  13 &$\Gamma_1\Xi_1^2$ & $x^3+x^2y+2x^2z+2xyz+xz^2+yz^2$  & $a^6+a^5b+2a^4b^2+2a^3b^3+a^2b^4+ab^5$ \\

\hline

  1 &$\Delta_4^+$ & $x^4+6x^2y^2+y^4+8z^4$ & $a^8 +6a^6b^2+a^4b^4+8b^8$ \\

  2 & $\Sigma_4$ &  $x^4+y^4+6x^2z^2+6y^2z^2+2z^4$ & $a^8+7a^4b^4+6a^2b^6+2b^8$ \\

  3 & $(\Delta_2\,-_2)^+$ & $x^4+x^2y^2+4xy^2z+x^2z^2+3y^2z^2+4xz^3+2z^4$ & $a^8+a^6b^2+5a^4b^4+7a^2b^6+2b^8$\\

  4 & $-_4^+$      &  $x^4+4xy^2z+2x^2z^2+4xz^3+4y^2z^2+z^4$     & $a^8+6a^4b^4+8a^2b^6+b^8$\\

  5&$(\Delta_2^+)^2$ & $x^4+2x^2y^2+4x^2z^2+y^4+4y^2z^2+4z^4$    & $a^8+2a^6b^2+5a^4b^4+4a^2b^6+4b^8$ \\

  6 & $\Xi_1^4$ & $x^4+4x^3z+6x^2z^2+4xz^3+z^4$& $a^8+4a^6b^2+6a^4b^4+4a^2b^6+b^8$ \\

  7 &$\Delta_2^+\Xi_1^2$ & $x^4+x^2y^2+2x^3z+3x^2z^2$ &\\
 & & \qquad\qquad\qquad $+2xy^2z+4xz^3+y^2z^2+2z^4$    & $a^8+3a^6b^2+5a^4b^4+5a^2b^6+2b^8$\\

  8 &$\Delta_3^+\Xi_1$ &  $x^4+x^3z+3x^2y^2+3xy^2z+4xz^3+4z^4$    & $a^8+4a^6b^2+3a^4b^4+4a^2b^6+4b^8$ \\

  9 &$\Upsilon_3\Xi_1$ &  $x^4+x^3z+3xy^2z+4xz^3+3x^2z^2+3y^2z^2+z^4$    & $a^8+a^6b^2+6a^4b^4+7a^2b^6+b^8$ \\
  \hline
\end{tabular}}\end{table}

\begin{proof}
The main tool to find all $L$-codes used was to compute the inequivalent markings of the
classified Kleinian codes using Lemma~\ref{one2one}.

The indecomposable self-dual Kleinian codes of
lengths up to~$4$ are the six codes $\gamma_1$, $\epsilon_2$,
$\delta_2^+$, $\delta_3^+$, $\delta_4^+$, and $(\delta_2^2)^+$
(\cite{Ho-kleinian}, Theorem~9 and~10 and Table~1 and~2).
Every inequivalent self-dual Kleinian code up
to length $4$ is a direct sum of one or more of these six indecomposable
codes.
If a Kleinian code $D$ is indecomposable, then $\phi_{\mathcal{M}}^{-1}(D)$
is an indecomposable $L$-code for every marking $\mathcal{M}$. The equivalence classes of markings of a
self-dual Kleinian code $D$ are in one-to-one correspondence with the equivalence classes
of self-dual $L$-codes $C$ with  $\phi(C)=D$ (Lemma~\ref{one2one}),
and a marking determines an $L$-code via the map $\phi_{\mathcal{M}}^{-1}$.

The equivalence classes of markings for each of the above self-dual Kleinian code were calculated
and for each case the automorphism group was computed by hand.
The automorphism group of an $L$-code $C=\phi_\mathcal{M}^{-1}(D)$ is the automorphism group
of the corresponding marking $\mathcal{M}$ (cf.~the discussion of markings in Section~\ref{sectionphi}).
To check that the result is correct, the image of ${\rm Aut}(C)$ in $S_{3n}$
using the map $\beta$ was computed with the help of Magma \cite{magma}
by using Lemma~\ref{betaAutomorphism}.

For small $n$, the Hamming weight enumerators of Kleinian codes are
distinct, so these can be used to identify $\phi(C)$ since
${\rm W}_{\phi(C)}={\rm W}_{C}$ and the Hamming weight enumerators of Kleinian 
codes are given in Theorem~10 of \cite{Ho-kleinian}.

\smallskip

The images of the other maps $\sigma$ and $\psi$ and the
weight enumerators were also computed by hand.
\end{proof}

There are several checks to show that this classification is complete.

One check is illustrated in the following table, which is an example
of how to check that all $L$-codes $C$ with $\phi(C)=D$ have been
found, for some Kleinian code~$D$. The example shows which
$L$-codes arise from markings of the Kleinian code $\delta_3^+$.
This Kleinian code has the automorphism group ${\rm Aut}(\delta_3^+)=S_2^2{:}S_3$ of
order~$24$ (\cite{Ho-kleinian}).
The table below shows that there are exactly five $L$-codes mapped to $\delta_3^+$ under $\phi$,
one for each inequivalent marking.
\begin{center}\renewcommand{\arraystretch}{1.2}
\begin{tabular}{ccr}
  $\mathcal{M}$ & ${\rm Aut}(\mathcal{M})$ & $\frac{|{\rm Aut}(\delta_3^+)|}{|{\rm Aut}(\mathcal{M})|}$ \\ 
  \hline
  $(aaa)$ & $2^2{:}S_3$ & 1   \\
  $(ccc)$ & $S_3$ & 4  \\
  $(bbb)$ & $S_3$ & 4 \\
  $(abb)$ & $S_2$ & 12   \\
  $(aab)$ & $2{:}S_2$ & 6 \\
  \hline
  total & & $27$
\end{tabular}\end{center}
The third column in the table checks the marking mass formula (Lemma~\ref{countMarkings}).
The total number of markings adds indeed up to $3^3=27$.
We could also use Lemma~\ref{Phiorbitformula} instead.

Two other checks are Theorem~\ref{massFormula} and
Theorem~\ref{massFormula2}, the formula for the total number of
self-dual $L$-codes and self-dual even $L$-codes, respectively. The
information for the check is provided in the sixth column of Tables~\ref{length1-4}
and~\ref{oddlength4}.
The sum of the sixth columns for each length match the total number of such codes.\nopagebreak

Another check is provided by Theorem~\ref{evenodd}.
\bigskip



\begin{thm} The number of self-dual $L$-codes and even self-dual $L$-codes of length up to~$10$
and fixed minimal weight are given in Table~\ref{tntable} and Table~\ref{entable}, respectively.
\end{thm}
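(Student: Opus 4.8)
The plan is to reduce the enumeration of self-dual $L$-codes to the known classification of self-dual Kleinian codes, exactly as was done for length up to~$4$ in Theorem~\ref{classCodes4}, but now carried out computationally for all lengths $n\leq 10$. The starting point is the list of self-dual Kleinian codes of length $n\leq 10$ from~\cite{Ho-kleinian}, together with their automorphism groups. For each such Kleinian code $D$, Lemma~\ref{one2one} says that the equivalence classes of self-dual $L$-codes $C$ with $\phi(C)=D$ are in bijection with the ${\rm Aut}(D)$-orbits on the set of $3^n$ markings $\mathcal{M}\in(K\setminus\{0\})^n$. Since $\phi$ restricts to a bijection on self-dual codes (Remark~\ref{selfdualphi}), running over all self-dual Kleinian $D$ and over all marking orbits produces, without repetition, every equivalence class of self-dual $L$-code of the given length.

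The second step is to equip each resulting code with the two invariants needed for the tables. First, for a marking orbit representative $\mathcal{M}$ I would construct the actual $L$-code $C=\phi_{\mathcal{M}}^{-1}(D)$ and compute its Euclidean weight enumerator ${\rm EW}_C$; the smallest positive exponent of $b$ occurring in ${\rm EW}_C$ gives the minimal Euclidean weight $d(C)$ by which the codes are sorted. Crucially, $d(C)$ genuinely depends on the marking and not just on $D$ (only the Hamming enumerator is preserved by $\phi$, by Remark~\ref{Wphi}), so this computation must be done code by code. Second, I would record whether $C$ is Euclidean even by testing $q(\x)=0$ on all codewords, equivalently whether every exponent appearing in ${\rm EW}_C$ is even; this splits the tally into the entries of Table~\ref{tntable} (all self-dual) and Table~\ref{entable} (even self-dual).

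The main labor, and the chief obstacle, is enumerating the marking orbits and the automorphism groups correctly at the larger lengths, where the by-hand computation displayed after Theorem~\ref{classCodes4} is no longer feasible. Here I would determine the ${\rm Aut}(D)$-orbits on markings inside Magma~\cite{magma}, and compute ${\rm Aut}(C)$ through its image in $S_{3n}$ via the map $\beta$, using Lemma~\ref{betaAutomorphism} and Magma's binary-code automorphism routines. The automorphism orders are needed both to decide inequivalence of the codes and to feed the consistency checks below.

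Finally, to guarantee the counts are complete and non-redundant I would verify them against the mass formulas: for each length the weighted sum $\sum_{[C]}\frac{2^n\cdot n!}{|{\rm Aut}(C)|}$ over all self-dual classes must equal $\prod_{i=1}^n(2^i+1)$ by Theorem~\ref{massFormula}, and the analogous sum over even classes must equal $\prod_{i=0}^{n-1}(2^i+1)$ by Theorem~\ref{massFormula2}. As an internal check on the marking enumeration for each individual $D$, the orbit sizes must sum to $3^n$ via the marking mass formula (Lemma~\ref{countMarkings}, or equivalently Lemma~\ref{Phiorbitformula}). Agreement of all these sums with the tabulated numbers is what certifies that the classification, and hence the counts by minimal weight, is correct.
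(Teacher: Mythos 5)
Your proposal is correct and follows essentially the same route as the paper: reduce the classification to self-dual Kleinian codes via $\phi$ and the marking correspondence of Lemma~\ref{one2one}, carry out the enumeration and automorphism/weight computations in Magma~\cite{magma} (using $\beta$ and Lemma~\ref{betaAutomorphism}), and certify completeness with the mass formulas of Theorems~\ref{massFormula} and~\ref{massFormula2}. The only notable difference is the source of the Kleinian input data: for lengths beyond those classified in~\cite{Ho-kleinian}, the paper draws on the database of self-dual Kleinian codes~\cite{Da-data} (cf.~\cite{DaPa-kleinian}) rather than on~\cite{Ho-kleinian} itself.
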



\begin{table}\caption{Number of inequivalent self-dual codes of length $n$ by minimum weight}\label{tntable}
$$\begin{array}{r|rrrrrrrrrrr}
d\backslash n & 1 & 2 & 3 & 4 & 5 & 6 & 7 & 8 & 9 & 10 & 11 \\
\hline
1 & 1 & 2 & 5 & 13 & 40 & 141 &  658 & 4252  &  44815 &   870524 &  33963453\\
2 & 1 & 2 & 5 & 17 & 61 & 288 & 1764 & 16063 & 237779 &  6288633 & ? \\
3 &   & 1 & 2 & 5  & 22 & 123 &  906 & 10339 & 195487 &  6480478 & ?\\
4 &   &   & 1 & 5  & 17 & 100 &  847 & 12257 & 305187 & 13400741 & ?\\
5 &   &   &   &    &  1 &   5 &   75 & 1857  &  84312 &  6442791 & ?\\
6 &   &   &   &    &    &   1 &    2 & 47    &   2943 &   480198 & ?\\
7 &   &   &   &    &    &     &      &  0    &      1 &       87 & ?\\
8 &   &   &   &    &    &     &      &       &      0 &        1 & ? \\
\hline
t_n & 2 & 5 & 13 & 40 & 141 & 658 & 4252 & 44815 & 870524 & 33963453 &   \geq  2.1\cdot 10^9   \\
i_n & 2 & 2 &  5 & 16 &  64 & 365 & 2854 & 35700 & 776182 & 32171268 \\ \hline
k_n & 1 & 2 &  3 &  6 &  11 &  26 &   59 &   182 & 675 & 3990 & 45144\\
\end{array}$$
\end{table}

\begin{table}\caption{Number of inequivalent even self-dual codes of length $n$ by minimum weight}\label{entable}
$$\begin{array}{r|rrrrrrrrrrrrrr}
d\backslash n & 1 & 2 & 3 & 4 & 5 & 6 & 7 & 8 & 9 & 10 & 11 & 12 & 13 &14\\
\hline
2 & 1 & 2 & 3 & 7 &  16 & 45 & 148 & 644 & 4013 & 42051 &?  & ? & ? & ?\\
4 &   &   & 1 & 2 &   5 & 18 &  68 & 408 & 3765 & 63511 &?  & ? & ? & ?\\
6 &   &   &   &   &     &  1 &   2 &  16 &  260 &  8625 &?  & ? & ? & ?\\
8 &   &   &   &   &     &    &     &     &    0 &     1 &11 & ? & ? & ?\\
10&   &   &   &   &     &    &     &     &      &       &   & 0 & 0 & ?  \\
\hline
e_n & 1 & 2 & 4 & 9& 21& 64 & 218 & 1068 & 8038 & 114188 & \geq 2.0\cdot 10^6 &  \geq 1.7\cdot 10^8 & & \\
j_n & 1 & 1 & 2 & 4& 10& 35 & 134 &  777 & 6702 & 104825
\end{array}
$$
\end{table}
The last rows at the bottom of Table~\ref{tntable} (resp.~Table~\ref{entable}) contain the total number $t_n$ of
(resp.~total number $e_n$ of even) inequivalent self-dual codes and the total number $i_n$ of
inequivalent and indecomposable (resp.~total number $j_n$ of inequivalent and indecomposable even)
self-dual codes. The last row of Table~\ref{tntable} contains the total number of inequivalent
self-dual Kleinian codes.

%

\begin{proof}

The number of inequivalent self-dual codes of length $n$ and minimum
weight~$d$ in Table~\ref{tntable} were calculated with Magma using the
database of Kleinian codes~\cite{Da-data} (cf.~\cite{DaPa-kleinian}).
They also follow from Theorem~\ref{classCodes4} for $n\leq4$.
The total number was checked with the mass formula
from Theorem~\ref{massFormula} and~\ref{massFormula2}.


The smallest possible number of equivalence classes
is attained when \hbox{$|{\rm Aut}(C)|=1$} for each $[C]$ in the formula
for $M(n)$ of Theorem \ref{massFormula}. This results in the lower
bound of $t_n$. To find the lower bound of $e_n$, one uses the formula
for $M_e(n)$ in Theorem~\ref{massFormula2}.

The values in the even case for $d=2\left[\frac{n}{3}\right]+2$ and
$ n\leq 11$ follow from the results from Section~\ref{extremalchapter}
about extremal even $L$-codes.

The number of indecomposable inequivalent (even) codes of lengths up to~$10$
have been computed inductively.
\end{proof}


\subsection{Classification of extremal codes}\label{extremalchapter}

We finally review some results on extremal self-dual $L$-codes
obtained in~\cite{Ga-master}. Details will be
published in a further paper~\cite{Galstadt-infty}.

By considering the Hamming weight enumerators of the Kleinian codes obtained from applying the
map~$\sigma$, one obtains an upper bound on the minimal weight of a self-dual code:
\begin{lem}\label{hammingbound}
The minimal Euclidean weight $d$ of a self-dual code of length~$n$ satisfies
$d\leq n+1$. For even self-dual codes one has the estimate
$d\leq 2\left[\frac{n}{3}\right]+2$.
\end{lem}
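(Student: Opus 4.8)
The plan is to prove the two bounds separately, in both cases using the doubling map $\sigma$ together with the coordinate-wise identity that underlies Lemma~\ref{Wsigma}. On a single coordinate the representatives $0\mapsto(0,0)$, $1\mapsto(0,a)$, $\o\mapsto(b,b)$, $\bo\mapsto(b,c)$ have Kleinian Hamming weights $0,1,2,2$, i.e.\ exactly the Euclidean weights $|0|^2,|1|^2,|\o|^2,|\bo|^2$; hence $\mathrm{wt}(\widetilde x)=\mathrm{ewt}(x)$ for every $x\in L^n$, and adjoining the glue $\delta_2^n$ never lowers a coordinate weight. Consequently $d(C)$ equals the smallest Hamming weight occurring in a \emph{nonzero} coset of $\delta_2^n$ inside the self-dual Kleinian code $\sigma(C)$ of length $2n$, which is self-dual by Lemma~\ref{simgaprop} and even when $C$ is even.

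For the even estimate I would use that $\sigma(C)$ is an even self-dual Kleinian code of length $2n$. The MacWilliams transform for Kleinian Hamming weight enumerators together with the evenness substitution $v\mapsto -v$ generates a group of order $12$; a short Molien computation gives the series $1/((1-\lambda^2)(1-\lambda^6))$, so $W_{\sigma(C)}$ is a polynomial in two invariants of degrees $2$ and $6$, and the Mallows--Sloane argument yields the extremal bound $2\lfloor m/6\rfloor+2$ for an even self-dual Kleinian code of length $m$. Taking $m=2n$ produces $2\lfloor n/3\rfloor+2$. Since $W_{\sigma(C)}=\mathrm{swe}_C(u^2+v^2,2uv,2v^2)$ by Lemma~\ref{Wsigma}, this is ultimately a statement about $\mathrm{swe}_C$, hence about $\mathrm{EW}_C$. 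Equivalently, one may use the identification from the proof of Theorem~\ref{massFormula2} of even self-dual $L$-codes of length $n$ with doubly-even self-dual binary codes of length $8n$ containing $d_8^n$, and apply the binary Mallows--Sloane bound $4\lfloor 8n/24\rfloor+4=4\lfloor n/3\rfloor+4$, which halves to $2\lfloor n/3\rfloor+2$ on the $L$-side.

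For the general bound $d\le n+1$ I would argue by Singleton-type puncturing directly on $C$. Since no nonzero codeword is supported on fewer than $h(C)$ coordinates, puncturing any $h(C)-1$ positions keeps $C$ injective, so $2^n=|C|\le 4^{\,n-h(C)+1}$ and hence $h(C)\le \frac n2+1$. A minimal Hamming-weight codeword then satisfies $\mathrm{ewt}\le 2\,\mathrm{wt}\le n+2$; for odd $n$ the integrality of $h(C)$ already forces $h(C)\le\frac{n+1}2$ and so $\mathrm{ewt}\le n+1$, leaving only the even MDS case $h(C)=\frac n2+1$.

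The main obstacle in both parts is the glue subcode. In $\sigma(C)$ the code $\delta_2^n$ contributes words of Hamming weight $2$, so the minimal weight of $\sigma(C)$ is merely $2$ and $d(C)$ is \emph{not} read off directly; the Gleason/Mallows--Sloane bound must therefore be run at the level of the weight enumerator (or the coset weight distribution), disentangling the contribution of the zero coset, rather than applied to the naive minimal weight. On the elementary side the analogous difficulty is the even MDS case: I must rule out that a minimal Euclidean-weight vector is supported purely on $\o,\bo$, which would give $\mathrm{ewt}=n+2$. The code $\Upsilon_2$, where the weight-$4$ vector $(\bo,\bo)$ is beaten by the weight-$3$ vector $(\o,1)$, illustrates the mechanism, and I expect the self-dual (MDS) structure to always supply a mixed vector of Euclidean weight at most $n+1$.
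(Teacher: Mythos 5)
Your framework is sound and in fact matches the route the paper indicates (the paper itself only sketches this proof, deferring details to \cite{Ga-master,Galstadt-infty}): the coordinate identity $\mathrm{wt}(\widetilde{x})=\mathrm{ewt}(x)$ is correct, adding glue never lowers a coordinate weight, and hence $d(C)$ equals the minimal Hamming weight over the \emph{nonzero} cosets of $\delta_2^n$ in the self-dual Kleinian code $\sigma(C)$. But in both halves the step you defer is precisely the mathematical content, so what you have is a correct setup, not a proof. For the even bound, the Mallows--Sloane bound you invoke controls the minimal Hamming weight of $\sigma(C)$ itself, and that minimum is always $2$ because of the glue words such as $(a,a,0,\ldots,0)\in\delta_2^n$; thus ``$d(\sigma(C))\le 2\left[\frac{2n}{6}\right]+2$'' is vacuously true and says nothing about $d(C)$. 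What is needed is a \emph{relative} extremal argument: $W_{\sigma(C)}$ lies in the $\left(\left[\frac{n}{3}\right]+1\right)$-dimensional span of the polynomials $(u^2+3v^2)^{n-3j}\bigl(v^2(u^2-v^2)^2\bigr)^j$, $0\le j\le\left[\frac{n}{3}\right]$, and $d(C)$ is the lowest $v$-degree of $W_{\sigma(C)}(u,v)-(u^2+v^2)^n$ (no cancellation occurs, since each nonzero coset contributes $(u^2+v^2)^r(2uv)^s(2v^2)^t$ with nonnegative coefficients, the lowest $v$-degree being $s+2t=\mathrm{ewt}(x)$). Forcing this difference to vanish through $v$-degree $2\left[\frac{n}{3}\right]+2$ imposes $\left[\frac{n}{3}\right]+1$ linear conditions on the $\left[\frac{n}{3}\right]$ parameters left after normalizing the coefficient of $u^{2n}$, and one must prove these conditions are independent --- the analogue of the coefficient computation (B\"urmann/Lagrange inversion) in the classical Mallows--Sloane proof, except that one matches $(u^2+v^2)^n$ rather than $u^{2n}$ in low degrees. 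You name this step (``disentangling the contribution of the zero coset'') but never carry it out; the binary detour has the identical defect, since the binary code contains $d_8^n$, has minimal weight $4$, and so satisfies the binary Mallows--Sloane bound trivially.

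The elementary argument for $d\le n+1$ has the same status. The Singleton bound $h(C)\le\frac{n}{2}+1$ and the inference $d\le 2\,h(C)$ settle odd $n$ and the case $h(C)\le\frac{n}{2}$, but the even MDS case $h(C)=\frac{n}{2}+1$ is real, not hypothetical: the $L$-codes lying over the Hexacode (Table~\ref{HexaLcodes}) have $n=6$ and $h=4$. In that case you must exhibit a nonzero codeword of Euclidean weight at most $n+1$, i.e.\ rule out that every nonzero codeword ${\bf x}$ satisfies $\mathrm{wt}({\bf x})+\#\{i\mid x_i\in\{\omega,\bar\omega\}\}\ge n+2$, and ``I expect the self-dual (MDS) structure to always supply a mixed vector'' is not an argument. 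So both bounds rest on unproved claims: to complete the proof you would either have to execute the relative extremal-enumerator computation described above (which also yields the first bound, using the invariant ring generated by $u+v$ and $u^2+3v^2$ for not-necessarily-even self-dual Kleinian codes, and which is what the paper's references supply), or find an independent argument covering the MDS case.
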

The first bound is only reached for $d=1$, $2$ and~$3$.
Similarly as for self-dual Kleinian codes~\cite{Ra-shadow}, self-dual binary codes, unimodular lattices and
self-dual vertex operator superalgebras~\cite{Ho-newweight}, it can be shown that
$$d\leq \cases{2\left[\frac{n}{3}\right]+3, & for $n \equiv 2 \pmod{3}$, \cr
               2\left[\frac{n}{3}\right]+2, & for $n \not\equiv 2 \pmod{3}$.} $$
We call a self-dual code resp.~even self-dual code {\it extremal\/} if it meet the bounds
of Lemma~\ref{hammingbound}.

\begin{thm}
Extremal even codes exists for the lengths $1$--$8$, $10$, $11$, and possibly~$14$,
but for no other length.
\end{thm}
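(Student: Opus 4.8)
The plan is to treat existence and non-existence separately, and for the non-existence direction to transport the problem, via the map $\sigma$, from Euclidean weights of $L$-codes to Hamming weights of Kleinian codes. The point of this reduction is that there is no MacWilliams identity for Euclidean weight enumerators, so the extremal Euclidean weight enumerator of $C$ is not pinned down directly; but Lemma~\ref{Wsigma} and Lemma~\ref{simgaprop} show that $\sigma$ sends an even self-dual $L$-code $C$ of length $n$ to an even (Hamming) self-dual Kleinian code $\sigma(C)$ of length $2n$, and a coordinatewise check of the definition of $\sigma$ shows that the minimal Hamming weight of $\sigma(C)$ equals the minimal Euclidean weight $d$ of $C$. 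On the Kleinian Hamming side the invariant theory is clean and the extremal enumerator is unique, which is what makes the argument work.

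For existence I would simply exhibit an extremal even code for each length in $\{1,\dots,8,10,11\}$. For $n\le 10$ these are contained in the classification underlying Table~\ref{entable}: the entry in the row $d=2[n/3]+2$ is positive exactly for $n\in\{1,\dots,8,10\}$. Concretely one can name $\Xi_1$ for $n=1$, $\Upsilon_3$ for $n=3$, $\Sigma_4$ for $n=4$, and the minimal-weight-$6$ code of Table~\ref{HexaLcodes} for $n=6$, with the remaining small lengths read off directly. For $n=11$ the same Magma computation over the Kleinian code database of~\cite{Da-data} produces the eleven extremal even codes, which completes this half.

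For non-existence, fix $n$ and suppose an extremal even $L$-code $C$ of length $n$ exists. Then $\sigma(C)$ is an even self-dual Kleinian code of length $2n$ whose minimal Hamming weight attains the bound $2[n/3]+2$. By the shadow theory for self-dual Kleinian codes~\cite{Ra-shadow}, the Hamming weight enumerators of even self-dual Kleinian codes of length $2n$ form a space spanned by products of $W_{\epsilon_2}=u^2+3v^2$ in degree $2$ and the degree-$6$ enumerator of the Hexacode $\hexa$, of dimension $[n/3]+1$ in degree $2n$. Imposing $A_0=1$ together with the $[n/3]$ vanishing conditions $A_2=A_4=\dots=A_{2[n/3]}=0$ therefore determines a unique candidate, the extremal weight enumerator $W^{\mathrm{ext}}_{2n}$. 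Since its coefficients count codewords, a single negative coefficient rules out all even self-dual Kleinian codes of that length and minimal weight, and a fortiori all those in the image of $\sigma$, hence all extremal even $L$-codes of length $n$.

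It then remains to analyse the signs of the coefficients of $W^{\mathrm{ext}}_{2n}$. The lengths $n=9,12,13$ are disposed of by explicit computation, where $W^{\mathrm{ext}}_{2n}$ has a negative coefficient (for $n=9$ one may also simply invoke the complete classification for $n\le10$), matching the zero entries in Table~\ref{entable}. For $n=14$ the same computation yields an enumerator with only nonnegative coefficients, so the positivity obstruction fails; as no extremal code of length $14$ is known, this case is recorded as ``possibly''. The hard part, and what I expect to be the main obstacle, is the uniform claim that $W^{\mathrm{ext}}_{2n}$ has a negative coefficient for \emph{every} $n\ge 15$. This is the analogue of the Mallows--Sloane argument for binary codes, and it is delicate because $W^{\mathrm{ext}}_{2n}$ is defined only implicitly, as the unique invariant with a prescribed block of vanishing low-order coefficients. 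I would isolate one well-chosen coefficient just above the forced zeros, derive a closed generating-function expression for it, and estimate its sign by a saddle-point or contour-integral analysis, showing it is negative for all $n\ge 15$. Because the threshold sits just past the surviving case $n=14$, the estimate must be sharp already at the low end; the full quantitative argument is what is deferred to~\cite{Galstadt-infty}.
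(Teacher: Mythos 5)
Your existence half is fine and matches the paper: for $n\le 10$ one reads the positive entries off the row $d=2\left[\frac{n}{3}\right]+2$ of Table~\ref{entable}, and length $11$ is settled by the same machine computation over the Kleinian code database. The non-existence half, however, rests on a false transport claim. It is not true that the minimal Hamming weight of $\sigma(C)$ equals the minimal Euclidean weight $d$ of $C$: by definition $\sigma(C)=\widetilde{C}+\delta_2^n$ contains the whole subcode $\delta_2^n$, and $(a,a,0,\ldots,0)\in\delta_2^n$ has Hamming weight $2$. Equivalently, by Lemma~\ref{Wsigma} the zero codeword of $C$ alone contributes $(u^2+v^2)^n$ to ${\rm W}_{\sigma(C)}$, so the coefficient of $u^{2n-2}v^2$ is at least $n$. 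Hence the minimal Hamming weight of $\sigma(C)$ equals $2$ for \emph{every} even code $C$, however large $d$ is, and $\sigma(C)$ is never an extremal Kleinian code once $2n\ge 6$. Consequently the vanishing conditions $A_2=\cdots=A_{2[n/3]}=0$ that you impose to pin down a ``unique extremal candidate'' enumerator are simply never satisfied by ${\rm W}_{\sigma(C)}$, and negativity of coefficients of that candidate polynomial implies nothing about $C$. The entire non-existence argument collapses at its first step. (A minor additional point: the Gleason-type description of Hamming enumerators of even self-dual Kleinian codes is from \cite{Ho-kleinian}, not from the shadow-bound paper \cite{Ra-shadow}.)

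The paper's route differs precisely where yours breaks. Extremality of $C$ is a statement about ${\rm EW}_C(a,b)={\rm swe}_C(a^2,ab,b^2)$, one specialization of ${\rm swe}_C$, while ${\rm W}_{\sigma(C)}(u,v)={\rm swe}_C(u^2+v^2,2uv,2v^2)$ is a different specialization; low-order vanishing in the first does not translate into low-order vanishing in the second, so the Hamming enumerator of $\sigma(C)$ must be analyzed ``in detail'' inside the invariant ring rather than by the standard forced-vanishing argument. Even then this step only yields the asymptotic exclusions $n\ge 21$, $31$, $38$ according to $n\equiv 0,1,2\pmod 3$. The remaining cases ($n=9,12,13$ and the lengths $n\ge 15$ below those thresholds) are excluded by a second, independent argument that you have no counterpart of: apply Theorem~\ref{ThmEvenSwe3poly} to write ${\rm swe}_C$ as a polynomial in the generators of degrees $1$, $2$, $3$, and impose the extremality constraints there, where they are honest linear conditions on ${\rm swe}_C$ itself. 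Deferring the quantitative estimates to \cite{Galstadt-infty}, as the paper also does, is not the problem; the problem is that your reduction to extremal Kleinian codes is based on a wrong weight identity and cannot be repaired without replacing it by an analysis of the pair $({\rm swe}_C,{\rm EW}_C)$ of the $L$-code itself.
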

\begin{proof}
Let $C$ be an extremal even code of length~$n$.
By considering the Hamming weight enumerator of the even self-dual code $\sigma(C)$ in detail,
one shows that no extremal code can exist for
$$n\geq\cases{     21, &   for $n\equiv 0 \pmod 3$,  \cr
                   31, &   for $n\equiv 1 \pmod 3$,   \cr
                   38, &   for $n\equiv 2 \pmod 3$.} $$
By considering the symmetric weight enumerator of $C$ using Theorem~\ref{ThmEvenSwe3poly}, one
excludes the lengths $9$, $12$, $13$ and the remaining lengths $n\geq 15$.
\end{proof}
The number of extremal even codes of length up to $11$ can be read off from Table~\ref{entable}.
For length~$11$, it is enough to restrict the search over the $2507$ Kleinian codes $\phi(C)$ with
minimal Hamming weight at least~$4$.
Generator matrices, weight enumerators and the automorphism groups for all~$42$
extremal codes up to length~$11$ can be found in~\cite{Ga-master,Galstadt-infty}.


\providecommand{\bysame}{\leavevmode\hbox to3em{\hrulefill}\thinspace}
\providecommand{\MR}{\relax\ifhmode\unskip\space\fi MR }
\providecommand{\MRhref}[2]{%
  \href{http://www.ams.org/mathscinet-getitem?mr=#1}{#2}
}
\providecommand{\href}[2]{#2}

\end{document}